\documentclass[11pt]{amsart}
\usepackage[T1]{fontenc}
\usepackage[utf8]{inputenc}
\usepackage{amsmath,amssymb,amsfonts,amsthm,mathrsfs,mathtools}
\usepackage{cite}
\usepackage[dvipsnames]{xcolor}
\usepackage[colorlinks=true,linkcolor=blue,citecolor=red,urlcolor=blue]{hyperref}

\usepackage{amsfonts,enumerate,verbatim,mathtools,tikz,bm,tikz-cd,hyperref,comment}
\numberwithin{equation}{section}
\newtheorem{theorem}{Theorem}[section]
\newtheorem{proposition}[theorem]{Proposition}
\newtheorem{lemma}[theorem]{Lemma}
\newtheorem{corollary}[theorem]{Corollary}
\theoremstyle{definition}
\newtheorem{definition}[theorem]{Definition}
\newtheorem{example}[theorem]{Example}
\theoremstyle{remark}
\newtheorem{remark}[theorem]{Remark}
\newcommand{\cA}{\mathcal A}
\newcommand{\cB}{\mathcal B}
\newcommand{\cC}{\mathcal C}
\newcommand{\cD}{\mathcal D}
\newcommand{\cE}{\mathcal E}

\newcommand{\cM}{\mathcal M}
\newcommand{\cN}{\mathcal N}
\newcommand{\cQ}{\mathcal Q}
\newcommand{\cU}{\mathcal U}
\newcommand{\cV}{\mathcal V}

\newcommand{\cX}{\mathcal X}
\newcommand{\cY}{\mathcal Y}
\newcommand{\sF}{\mathsf F}
\newcommand{\sI}{\mathsf I}
\newcommand{\sT}{\mathsf T}
\newcommand{\sU}{\mathsf U}
\newcommand{\sZ}{\mathsf Z}
\newcommand{\sC}{\mathsf C}

\newcommand{\Ext}{\operatorname{Ext}}
\newcommand{\Tor}{\operatorname{Tor}}

\newcommand{\Coker}{\operatorname{Coker}}
\newcommand{\Img}{\operatorname{Im}}

\newcommand{\Mod}{\operatorname{Mod}}

\newcommand{\ellength}{\ell_{\eta}}

\title[Higher Cotorsion Pairs on Image-Nilpotent
\(\eta\)-Extensions]
{Higher Cotorsion Pairs on Image-Nilpotent
\(\eta\)-Extensions of Abelian Categories}

\author[D. Liu]{Dajun Liu}
\address{School of Mathematics-Physics and Finance,
Anhui Polytechnic University,
Wuhu 241000, Anhui, P. R. China}
\email{liudajun@ahpu.edu.cn}

\author[H. Gao]{Hanpeng Gao}
\address{School of Mathematical Sciences,
Anhui University,
Hefei 230601, Anhui, P. R. China}
\email{hpgao07@163.com}

\subjclass[2020]{16B50; 16E30; 18G25}


\keywords{\(\eta\)-extension, \(n\)-cotorsion pair, \(\eta\)-Loewy filtration,
image-nilpotent extension }

\begin{document}
\maketitle

\begin{abstract}
Let \(\cB\) be an abelian category with enough projective and injective
objects, and let
$
\cA=\cB\ltimes_\eta\sF
$
be an \(\eta\)-extension induced by a right exact endofunctor \(\sF\).
We introduce uniform image-nilpotence for \(\eta\)-extensions and
characterize it in terms of the nilpotence of the associative natural
transformation \(\eta:\sF^2\to\sF\). Using the associated \(\eta\)-Loewy filtration, we prove a
lifting theorem for right \(n\)-cotorsion pairs over
image-nilpotent \(\eta\)-extensions and establish a corresponding
completeness theorem. As applications, our results
recover and unify related constructions for split nilpotent ring
extensions, comma categories, formal triangular matrix rings, Morita
context rings and classical trivial extensions.
\end{abstract}
\maketitle
\section{Introduction}
Cotorsion pairs organize approximation theory, relative homological algebra and abelian model structures. Their behavior under matrix constructions and categorical extensions has therefore been studied in several settings, including formal triangular matrix rings, comma categories, Morita context rings and trivial extension rings, see \cite{MaoTriangular,HuZhu,YuanHeWu,CuiRongZhang,MaoTrivial}. Marmaridis introduced $\eta$-extensions of abelian categories in \cite{Marmaridis}, and Beligiannis developed the broader cleft-extension framework and its relative homology in \cite{BeligiannisCleft,BeligiannisRelative}. Recently, Hu proved that a cotorsion pair $(\cX,\cY)$ in an abelian category $\cB$ induces a cotorsion pair
\[
({}^{\perp}\sU^{-1}(\cY),\sU^{-1}(\cY))
\]
in an $\eta$-extension $\cB\ltimes_{\eta}\sF$, and studied heredity, completeness, Gorenstein projective objects and Hovey triples \cite{Hu}. 

The higher analogue of a cotorsion pair was introduced by Huerta, Mendoza and P\'erez \cite{HMP}. Long and Zhang constructed left and right $n$-cotorsion pairs over formal triangular matrix rings \cite{LongZhang}. Their work demonstrates that higher cotorsion theory is compatible with matrix extensions. The purpose of the present paper is to isolate the structural mechanism which permits a genuinely non-trivial $\eta$-extension theorem.

The starting point of the present paper is the image-cokernel
decomposition \cite[Lemma 3.14]{Hu}. By iterating its image term, we
obtain a descending filtration
\[
A=\sI^0(A)\supseteq\sI(A)\supseteq\sI^2(A)\supseteq\cdots,
\]
called the \(\eta\)-Loewy filtration. We prove that this filtration is
governed by the iterated multiplication of \(\eta\). More precisely,
for every \(d\geq1\),
\[
\cA\text{ is uniformly image-nilpotent of index at most }d
\quad\Longleftrightarrow\quad
\eta^{[d]}=0.
\]
Thus image-nilpotence provides the finite structural mechanism needed
to pass from the objects in the image of the zero embedding
\(\sZ:\cB\to\cA\) to arbitrary objects of \(\cA\).

Our main result is a lifting theorem for right
\(n\)-cotorsion pairs. It shows that, on an image-nilpotent
\(\eta\)-extension, the approximation property for all objects is
determined by the corresponding property on the objects \(\sZ(B)\).
As a functorial consequence, suitable right \(n\)-cotorsion pairs
\((\cX,\cY)\) in \(\cB\) induce right \(n\)-cotorsion pairs
$
\bigl(\sT(\cX),\sU^{-1}(\cY)\bigr)
$
in \(\cA\). The required approximations are obtained by lifting the
basic approximations in \(\cB\) and gluing them along the finite
\(\eta\)-Loewy filtration.

We also prove that if \((\cX,\cY)\) is a hereditary complete cotorsion
pair satisfying
\[
L_1\sF(\cX)=0,\qquad \sF(\cX)\subseteq\cY,
\]
then
$
\bigl({}^{\perp}\sU^{-1}(\cY),\sU^{-1}(\cY)\bigr)
$
is a hereditary complete cotorsion pair over every image-nilpotent
\(\eta\)-extension. This extends the completeness result of \cite{Hu}.

The general results specialize to split nilpotent ring extensions,
comma categories, formal triangular matrix rings, Morita context
rings and classical trivial extensions. In this way, the
image-nilpotent framework gives a common explanation for several
cotorsion-pair constructions appearing in
\cite{Hu,LiuFengHuZhang,LongZhang,MaoTriangular,CuiRongZhang,
MaoTrivial}.

The paper is organized as follows. Section 2 recalls the necessary
facts on \(\eta\)-extensions, exact equivalences and \(n\)-cotorsion
pairs. Section 3 develops the \(\eta\)-Loewy filtration, establishes
the nilpotence criterion and proves the filtration reduction theorem.
Section 4 will construct  right \(n\)-cotorsion pairs
$
\bigl(\sT(\cX),\sU^{-1}(\cY)\bigr)
$
in \(\cA\) and then 
contain the completeness theorem.
Section 5 applies the general theory to split nilpotent ring
extensions, comma categories, formal triangular matrix rings, right
trivial extensions and Morita context rings.

\section{Preliminaries}
Throughout, all subcategories are full, additive and closed under isomorphisms. The abelian categories under consideration have enough projective and injective objects.

Let $\cB$ be an abelian category, $\sF:\cB\to\cB$ be a covariant right exact additive functor and  $\eta:\sF^2\to\sF$ be an associative natural transformation. Assume that $\eta$ is \emph{associative}, that is, $\eta\circ\mathsf{F}\eta=\eta\circ\eta\mathsf{F}$.
The $\eta$-extension $\cA=\cB\ltimes_{\eta}\sF$ has objects $(B,f)$ with $f:\sF(B)\to B$ satisfying
\[
f\sF(f)=f\eta_B,
\]
and morphisms $\alpha:(B,f)\to(B',f')$ satisfying $\alpha f=f'\sF(\alpha)$. This is an abelian category, see \cite{Marmaridis}. If $\eta=0$, the category is denoted by $\cB\ltimes\sF$ and is called a right trivial extension.

There are the following functors:
\begin{itemize}
	\item $\mathsf{U}(X,f)=X$ and $\mathsf{U}(\alpha)=\alpha$;
	\item $\mathsf{Z}(X)=(X,0)$ and $\mathsf{Z}(\alpha)=\alpha$;
	\item $\mathsf{T}(X)=\left(X\oplus\mathsf{F}(X),t_X:=\left(\begin{smallmatrix}
		0&0\\
		1&\eta_X
	\end{smallmatrix}\right)\right)$ and $\mathsf{T}(\alpha)=\begin{pmatrix}
		\alpha&0\\
		0&\mathsf{F}(\alpha)
	\end{pmatrix}$;
	\item $\mathsf{C}(X,f)=\mathrm{Coker}(f)$ and $\mathsf{C}(\alpha)$ is the morphism induced by the cokernel.
\end{itemize}

The associativity of $\eta$ guarantees that $\mathsf{T}(X)$ is indeed an object of $\mathcal{B}\ltimes_\eta\mathsf{F}$ (the converse is also true). The following Lemma is  obvious.

\begin{lemma}\label{lem:1}
	Let $\mathcal{B}\ltimes_\eta\mathsf{F}$ be an $\eta$-extension of $\mathcal{B}$. Then the  sequence $(X,f)\xrightarrow{\alpha}(Y,g)\xrightarrow{\beta}(Z,h)$ is exact in $\mathcal{B}\ltimes_\eta\mathsf{F}$ if and only if the underlying sequence$ X\xrightarrow{\alpha}Y\xrightarrow{\beta}Z$ is exact in $\mathcal{B}$. In particular, the functor $\mathsf{U}$ is exact.
\end{lemma}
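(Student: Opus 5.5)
The plan is to show that kernels and cokernels in $\mathcal{B}\ltimes_\eta\mathsf{F}$ are computed on underlying objects. Exactness in either category then reduces to the equality $\operatorname{Im}\alpha=\operatorname{Ker}\beta$, computed in the same way.

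Let $\alpha:(X,f)\to(Y,g)$ be a morphism in $\mathcal{A}$, and let $k:K\to X$ be the kernel of $\alpha$ in $\mathcal{B}$.

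\textbf{Kernels.} The composite $f\,\mathsf{F}(k):\mathsf{F}(K)\to X$ satisfies $\alpha f\mathsf{F}(k)=g\mathsf{F}(\alpha)\mathsf{F}(k)=g\mathsf{F}(\alpha k)=0$. So it factors uniquely as $k\,f_K$ for some $f_K:\mathsf{F}(K)\to K$. To see that $(K,f_K)$ is an object of $\mathcal{A}$, note that $k$ is monic. Hence the identity $f_K\mathsf{F}(f_K)=f_K\eta_K$ follows by postcomposing with $k$ and using naturality of $\eta$ together with $f\mathsf{F}(f)=f\eta_X$:
\[
kf_K\mathsf{F}(f_K)=f\mathsf{F}(k)\mathsf{F}(f_K)=f\mathsf{F}(f)\mathsf{F}^2(k)=f\eta_X\mathsf{F}^2(k)=f\mathsf{F}(k)\eta_K=kf_K\eta_K .
\]
The universal property of $(K,f_K)$ in $\mathcal{A}$ follows from the one in $\mathcal{B}$, again because $k$ is monic.

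\textbf{Cokernels.} Let $c:Y\to C$ be the cokernel of $\alpha$ in $\mathcal{B}$. Right exactness of $\mathsf{F}$ gives that $\mathsf{F}(c)$ is the cokernel of $\mathsf{F}(\alpha)$. Since $cg\mathsf{F}(\alpha)=c\alpha f=0$, the map $cg$ factors uniquely as $g_C\mathsf{F}(c)$. The identity $g_C\mathsf{F}(g_C)=g_C\eta_C$ is checked by precomposing with the epimorphism $\mathsf{F}^2(c)$, which is epic because $\mathsf{F}$ preserves epimorphisms.

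\textbf{Images and exactness.} The image of $\alpha$ in $\mathcal{A}$ is the kernel of its cokernel. By the two constructions above, its underlying object is the image of $\alpha$ in $\mathcal{B}$, and likewise the kernel of $\beta$ in $\mathcal{A}$ has underlying object $\operatorname{Ker}\beta$. A monomorphism between these subobjects in $\mathcal{A}$ is an isomorphism exactly when its underlying map is. This holds because a morphism of $\mathcal{A}$ that is bijective on underlying objects has an inverse that is automatically compatible with the structure maps. Hence $\operatorname{Im}\alpha=\operatorname{Ker}\beta$ in $\mathcal{A}$ if and only if the same equality holds in $\mathcal{B}$, which gives the equivalence. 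Exactness of $\mathsf{U}$ is the forward implication.

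\textbf{Main obstacle.} The only genuine point is the cokernel step, which needs right exactness of $\mathsf{F}$ in order to descend the structure map. Everything else is routine diagram chasing.
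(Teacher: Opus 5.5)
Your argument is correct. The paper gives no proof beyond calling the lemma obvious; it relies implicitly on Marmaridis's construction of $\mathcal{B}\ltimes_\eta\mathsf{F}$ as an abelian category whose kernels and cokernels are formed in $\mathcal{B}$, and your proposal supplies exactly that omitted check. You correctly identify right exactness of $\mathsf{F}$ as what lets the structure map descend to the cokernel and makes $\mathsf{F}^2(c)$ epic, and you correctly use that $\mathsf{U}$ reflects isomorphisms to transfer $\operatorname{Im}\alpha=\operatorname{Ker}\beta$ between the two categories.
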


\begin{definition}\label{def:higher-orthogonals}
Let \(\mathcal E\) be an abelian category, let \(\mathcal M\) be a class of objects of \(\mathcal E\), and let \(1\leq a\leq b\). Define
\[
\mathcal M^{\perp_{[a,b]}}
=
\left\{
N\in\mathcal E\ \middle|\
\Ext^i_{\mathcal E}(M,N)=0
\text{ for all }M\in\mathcal M,\ a\leq i\leq b
\right\},
\]
\[
{}^{\perp_{[a,b]}}\mathcal M
=
\left\{
N\in\mathcal E\ \middle|\
\Ext^i_{\mathcal E}(N,M)=0
\text{ for all }M\in\mathcal M,\ a\leq i\leq b
\right\}.
\]
For \(a=b=1\), we use the abbreviations
\[
\mathcal M^\perp=\mathcal M^{\perp_{[1,1]}},
\qquad
{}^\perp\mathcal M={}^{\perp_{[1,1]}}\mathcal M.
\]
For classes \(\mathcal X,\mathcal Y\subseteq\mathcal E\), the notation
\[
\Ext^i_{\mathcal E}(\mathcal X,\mathcal Y)=0
\]
means that \(\Ext^i_{\mathcal E}(X,Y)=0\) for all
\(X\in\mathcal X\) and \(Y\in\mathcal Y\).
\end{definition}

\begin{definition}[\cite{Hu}]\label{def:cotorsion-pair}
Let \(\mathcal E\) be an abelian category. A pair
\((\mathcal X,\mathcal Y)\) of classes of objects of \(\mathcal E\)
is called a cotorsion pair if
\[
\mathcal X={}^{\perp_{}}\mathcal Y,
\qquad
\mathcal Y=\mathcal X^{\perp_{}}.
\]

An epimorphism \(X_A\to A\) is called a special
\(\mathcal X\)-precover of \(A\) if it occurs in a short exact sequence
\[
0\to Y_A\to X_A\to A\to0
\]
with \(X_A\in\mathcal X\) and \(Y_A\in\mathcal Y\). A monomorphism
\(A\to Y^A\) is called a special \(\mathcal Y\)-preenvelope of \(A\)
if it occurs in a short exact sequence
\[
0\to A\to Y^A\to X^A\to0
\]
with \(Y^A\in\mathcal Y\) and \(X^A\in\mathcal X\).

The cotorsion pair is complete if every object of \(\mathcal E\)
admits both a special \(\mathcal X\)-precover and a special
\(\mathcal Y\)-preenvelope. It is hereditary if
\[
\Ext^i_{\mathcal E}(\mathcal X,\mathcal Y)=0
\qquad\text{for all }i\geq1.
\]
A cotorsion pair which is both complete and hereditary is called a
hereditary complete cotorsion pair.
\end{definition}

\begin{definition}\label{def:resolving-coresolving}
Let \(\mathcal E\) be an abelian category and let
\(\mathcal M\) be a full additive subcategory of \(\mathcal E\).
The subcategory \(\mathcal M\) is called resolving if it contains
all projective objects, is closed under extensions, and is closed
under kernels of epimorphisms.
Dually, \(\mathcal M\) is called coresolving if it contains all
injective objects, is closed under extensions, and is closed under
cokernels of monomorphisms.
\end{definition}

\begin{proposition}[\cite{CLZ,EnochsJenda,G11,GobelTrlifaj}]\label{prop:cotorsion-criteria}
Let \((\mathcal X,\mathcal Y)\) be a cotorsion pair in an abelian
category \(\mathcal E\) with enough projective and injective objects.
\begin{enumerate}
\item The following conditions are equivalent:
\begin{enumerate}
\item \((\mathcal X,\mathcal Y)\) is complete;
\item every object of \(\mathcal E\) admits a special
\(\mathcal X\)-precover;
\item every object of \(\mathcal E\) admits a special
\(\mathcal Y\)-preenvelope.
\end{enumerate}
\item The following conditions are equivalent:
\begin{enumerate}
\item \((\mathcal X,\mathcal Y)\) is hereditary;
\item \(\Ext^2_{\mathcal E}(\mathcal X,\mathcal Y)=0\);
\item \(\Ext^i_{\mathcal E}(\mathcal X,\mathcal Y)=0\) for all
\(i\geq1\);
\item \(\mathcal X\) is closed under kernels of epimorphisms;
\item \(\mathcal Y\) is closed under cokernels of monomorphisms.
\end{enumerate}
\end{enumerate}
\end{proposition}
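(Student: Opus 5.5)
The plan is to prove each part by standard dimension shifting and pushout/pullback constructions, using the enough projectives and injectives of \(\mathcal E\) together with the long exact \(\Ext\)-sequence.

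For part (1), (a) trivially implies both (b) and (c), so the content lies in (b)\(\Rightarrow\)(c) and, dually, (c)\(\Rightarrow\)(b); this is Salce's trick.
\begin{itemize}
\item Assume (b) and let \(A\in\mathcal E\). Choose a short exact sequence \(0\to A\to I\to K\to 0\) with \(I\) injective.
\item Take a special \(\mathcal X\)-precover \(0\to Y_K\to X_K\to K\to0\) of \(K\), and form the pullback \(P\) of \(I\to K\leftarrow X_K\).
\item This yields exact sequences \(0\to Y_K\to P\to I\to 0\) and \(0\to A\to P\to X_K\to0\).
\item Since \(\mathcal Y=\mathcal X^\perp\), the class \(\mathcal Y\) is closed under extensions and contains all injectives: \(\Ext^1(\mathcal X,I)=0\).
\item Hence \(P\in\mathcal Y\), so \(A\to P\) is a special \(\mathcal Y\)-preenvelope.
\end{itemize}
The converse is dual. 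Start from \(0\to L\to Q\to A\to0\) with \(Q\) projective, take a special preenvelope of \(L\), and form the pushout. Here one uses that \(\mathcal X={}^\perp\mathcal Y\) is extension closed and contains the projectives.

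For part (2), first note that (a) and (c) are the same condition by definition, and (c)\(\Rightarrow\)(b) is trivial. The remaining implications go as follows.
\begin{itemize}
\item \textbf{(c)\(\Rightarrow\)(d).} Let \(0\to X'\to X\to X''\to0\) be exact with \(X,X''\in\mathcal X\). For \(Y\in\mathcal Y\), the long exact sequence gives \(\Ext^1(X,Y)\to\Ext^1(X',Y)\to\Ext^2(X'',Y)\). Both outer terms vanish, so \(\Ext^1(X',Y)=0\) and \(X'\in{}^\perp\mathcal Y=\mathcal X\).
\item \textbf{(c)\(\Rightarrow\)(e).} This is the dual argument.
\item \textbf{(d)\(\Rightarrow\)(c).} Take \(X\in\mathcal X\) and a short exact sequence \(0\to \Omega X\to Q\to X\to0\) with \(Q\) projective. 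Since \(Q\in\mathcal X\), (d) gives \(\Omega X\in\mathcal X\). Dimension shifting then gives \(\Ext^{i+1}(X,Y)\cong\Ext^i(\Omega X,Y)\) for \(i\geq1\). Inducting on \(i\) over all \(X\in\mathcal X\) simultaneously yields \(\Ext^i(\mathcal X,\mathcal Y)=0\) for every \(i\geq1\).
\item \textbf{(e)\(\Rightarrow\)(c).} This is dual, using injective cosyzygies of \(Y\).
\item \textbf{(b)\(\Rightarrow\)(c).} This is the main point needing care.
\end{itemize}

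For (b)\(\Rightarrow\)(c), I would first show (b)\(\Rightarrow\)(d). Given \(0\to X'\to X\to X''\to0\) with \(X,X''\in\mathcal X\), the segment \(\Ext^1(X,Y)\to\Ext^1(X',Y)\to\Ext^2(X'',Y)\) has both ends zero by the cotorsion property and (b). So \(X'\in\mathcal X\), which is (d), and (d)\(\Rightarrow\)(c) was shown above.

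The only mild obstacle I expect is in part (1): verifying exactness of the pullback and pushout sequences and the membership of injectives and projectives in the relevant classes. Both are routine.
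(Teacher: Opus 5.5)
Your argument is correct and complete. Salce's pullback/pushout trick handles (1); for (2), the chain (c)$\Rightarrow$(b)$\Rightarrow$(d)$\Rightarrow$(c) together with (c)$\Leftrightarrow$(e), via projective syzygies and injective cosyzygies, gives all the equivalences without any completeness assumption. The paper itself gives no proof and only cites the standard references, where essentially this same argument appears.
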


\begin{lemma}[\cite{EnochsJenda,GobelTrlifaj}]\label{lem:cotorsion-closure}
If \((\mathcal X,\mathcal Y)\) is a cotorsion pair in an abelian
category, then both \(\mathcal X\) and \(\mathcal Y\) are closed under
extensions and direct summands.
\end{lemma}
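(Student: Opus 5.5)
The plan is to prove the two assertions of Lemma~\ref{lem:cotorsion-closure} separately, using only the long exact $\Ext$-sequence in $\mathcal E$ together with the equalities $\mathcal X={}^{\perp}\mathcal Y$ and $\mathcal Y=\mathcal X^{\perp}$. Since the pair is a cotorsion pair, each class is described by the vanishing of one $\Ext^1$-functor. So it is enough to check that this vanishing is preserved by extensions and by passing to direct summands.

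\emph{Closure under extensions.} Let $0\to X'\to X\to X''\to 0$ be exact in $\mathcal E$ with $X',X''\in\mathcal X$, and fix $Y\in\mathcal Y$. Applying $\operatorname{Hom}_{\mathcal E}(-,Y)$ gives the exact segment
\[
\Ext^1_{\mathcal E}(X'',Y)\to\Ext^1_{\mathcal E}(X,Y)\to\Ext^1_{\mathcal E}(X',Y).
\]
Both outer terms vanish because $X',X''\in{}^{\perp}\mathcal Y$. Hence $\Ext^1_{\mathcal E}(X,Y)=0$, and since $Y\in\mathcal Y$ was arbitrary, $X\in{}^{\perp}\mathcal Y=\mathcal X$. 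The argument for $\mathcal Y$ is dual: apply $\operatorname{Hom}_{\mathcal E}(X,-)$ with $X\in\mathcal X$ to a short exact sequence whose end terms lie in $\mathcal Y$, and use $\mathcal Y=\mathcal X^{\perp}$. Here I use only the Yoneda description of $\Ext^1$ and its long exact sequences in each variable. These exist in any abelian category, and they agree with the derived-functor $\Ext$ when there are enough projectives or injectives, as in the paper's standing hypothesis.

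\emph{Closure under direct summands.} Suppose $X=X_1\oplus X_2\in\mathcal X$ and fix $Y\in\mathcal Y$. The functor $\Ext^1_{\mathcal E}(-,Y)$ is additive, so
\[
\Ext^1_{\mathcal E}(X,Y)\cong\Ext^1_{\mathcal E}(X_1,Y)\oplus\Ext^1_{\mathcal E}(X_2,Y).
\]
The left side is zero, so each summand is zero, which gives $X_1\in{}^{\perp}\mathcal Y=\mathcal X$. Alternatively, one can note that the identity of $\Ext^1(X_1,Y)$ factors through $\Ext^1(X,Y)=0$ via the split inclusion and projection. The case of $\mathcal Y$ follows in the same way from additivity of $\Ext^1_{\mathcal E}(X,-)$.

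I do not expect any step to be a genuine obstacle. The only point needing care is to use the full orthogonality equalities: it is $\mathcal X={}^{\perp}\mathcal Y$, not merely $\mathcal X\subseteq{}^{\perp}\mathcal Y$, that places the middle term and the summand back inside $\mathcal X$. Closure under isomorphisms and additivity are part of the paper's standing conventions.
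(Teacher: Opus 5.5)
Your proposal is correct. The paper gives no proof of this lemma and only cites Enochs--Jenda and G\"obel--Trlifaj; your argument is the standard one behind that citation, namely the long exact $\Ext$-sequences combined with $\mathcal X={}^{\perp}\mathcal Y$ and $\mathcal Y=\mathcal X^{\perp}$ for extensions, and additivity of $\Ext^1$ for direct summands.
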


We write $\cM^{\vee}_m$ for the objects admitting a coresolution
\[
0\longrightarrow N\longrightarrow M^0\longrightarrow\cdots\longrightarrow M^m\longrightarrow0
\]
with $M^j\in\cM$, and $\cM^{\wedge}_m$ for the dual resolution class.

\begin{definition}\label{def:n-cotorsion-pairs}
Let \(n\geq1\), and \((\cX,\cY)\) be classes in an
abelian category \(\cB\).

The pair \((\cX,\cY)\) is a left \(n\)-cotorsion pair if:
\begin{enumerate}
\item \(\cX\) is closed under direct summands;
\item \(\Ext^i_{\cB}(\cX,\cY)=0\) for \(1\leq i\leq n\);
\item every \(B\in\cB\) admits a short exact sequence
\[
0\to Y_B\to X_B\to B\to0
\]
with \(X_B\in\cX\) and \(Y_B\in\cY^\wedge_{n-1}\).
\end{enumerate}

The pair is a right \(n\)-cotorsion pair if:
\begin{enumerate}
\item \(\cY\) is closed under direct summands;
\item \(\Ext^i_{\cB}(\cX,\cY)=0\) for \(1\leq i\leq n\);
\item every \(B\in\cB\) admits a short exact sequence
\[
0\to B\to Y_B\to X_B\to0
\]
with \(Y_B\in\cY\) and \(X_B\in\cX^\vee_{n-1}\).
\end{enumerate}
\end{definition}

A left or right \(n\)-cotorsion pair \((\cX,\cY)\) is called
hereditary if
$
\Ext^i_{\cB}(\cX,\cY)=0
$
for every \(i\geq1\).

The following characterization and dimension-shifting property will be used repeatedly.

\begin{proposition}\label{prop:HMP}
Let $(\cX,\cY)$ be a pair of classes in $\cB$.
\begin{enumerate}
\item The pair is a right $n$-cotorsion pair if and only if
\[
\cY=\cX^{\perp_{[1,n]}}
\]
and every $B\in\cB$ admits the exact sequence in Definition \ref{def:n-cotorsion-pairs}.
\item If $\Ext^i_{\cB}(\cX,\cY)=0$ for $1\leq i\leq n$, then
\[
\Ext^1_{\cB}(\cX^{\vee}_{n-1},\cY)=0.
\]
\end{enumerate}
\end{proposition}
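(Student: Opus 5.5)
The statement has two parts, and I would handle them separately.

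For part (1), suppose first that \((\cX,\cY)\) is a right \(n\)-cotorsion pair. By Definition \ref{def:n-cotorsion-pairs}, condition (2) gives \(\cY\subseteq\cX^{\perp_{[1,n]}}\). The exact sequences in condition (3) hold by hypothesis.

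For the reverse inclusion, let \(N\in\cX^{\perp_{[1,n]}}\). Take the given sequence \(0\to N\to Y_N\to X_N\to0\) with \(Y_N\in\cY\) and \(X_N\in\cX^\vee_{n-1}\). Assuming part (2) is already established, \(\Ext^1_{\cB}(X_N,N)=0\): indeed, part (2) applies to the pair \((\cX,\{N\})\), since \(\Ext^i_{\cB}(\cX,N)=0\) for \(1\le i\le n\). Hence the sequence splits, so \(N\) is a direct summand of \(Y_N\). Since \(\cY\) is closed under direct summands, \(N\in\cY\).

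Conversely, assume \(\cY=\cX^{\perp_{[1,n]}}\) together with the sequences. Then \(\Ext^i_{\cB}(\cX,\cY)=0\) for \(1\le i\le n\) holds by definition. Closure of \(\cY\) under direct summands follows because \(\Ext^i_{\cB}(X,-)\) is additive, so \(\Ext^i_{\cB}(X,Y_1\oplus Y_2)=0\) forces \(\Ext^i_{\cB}(X,Y_1)=0\).

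For part (2), I would show by induction on \(m\), for \(0\le m\le n-1\), the stronger claim
\[
\Ext^i_{\cB}(M,Y)=0 \quad\text{for all } M\in\cX^\vee_m,\ Y\in\cY,\ 1\le i\le n-m.
\]
At \(m=n-1\) this gives exactly \(\Ext^1_{\cB}(\cX^\vee_{n-1},\cY)=0\).

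The base case \(m=0\) is the hypothesis, since \(\cX^\vee_0=\cX\) up to isomorphism. For the inductive step, let \(M\in\cX^\vee_m\) with coresolution \(0\to M\to X^0\to\cdots\to X^m\to0\). Put \(K=\Coker(M\to X^0)\). Then \(0\to M\to X^0\to K\to0\) is exact, and \(K\in\cX^\vee_{m-1}\) via the truncated coresolution. The long exact sequence for \(\Hom_{\cB}(-,Y)\) gives
\[
\Ext^i_{\cB}(X^0,Y)\to\Ext^i_{\cB}(M,Y)\to\Ext^{i+1}_{\cB}(K,Y).
\]
For \(1\le i\le n-m\), the left term vanishes because \(i\le n\). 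The right term vanishes by induction, since \(i+1\le n-(m-1)\). Therefore the middle term vanishes, completing the induction.

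The only delicate point is this index bookkeeping in the dimension shift, namely matching the range \(i+1\le n-m+1\). Everything else is formal.
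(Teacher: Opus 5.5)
Your proof is correct. The paper gives no argument for this proposition; it only cites \cite[Theorem 2.7 and Proposition 2.5]{HMP}. So your proposal is a self-contained replacement for that citation, not a reconstruction of a proof in the text.

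Your graded induction for (2) is sound. You show $\Ext^i_{\cB}(\cX^\vee_m,\cY)=0$ for $1\le i\le n-m$ by shifting along $0\to M\to X^0\to K\to0$ with $K\in\cX^\vee_{m-1}$, and the index check $i+1\le n-(m-1)$ is right.

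For (1), you correctly establish (2) first. You then apply it with the class $\cX^{\perp_{[1,n]}}$ (or just $\{N\}$) in place of $\cY$, which splits $0\to N\to Y_N\to X_N\to0$. Closure of $\cY$ under summands then gives $N\in\cY$.

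Your route uses only the long exact Ext sequence, so it makes the paper independent of the external reference. The graded form is also more informative than the statement itself. If you run the same induction assuming vanishing in degrees $1,\dots,n+1$, you get $\Ext^i_{\cB}(\cX^\vee_{n-1},\cY)=0$ for $i=1,2$. This recovers the paper's Lemma \ref{lem:dimension-shifting} by the same mechanism. The citation gains only brevity.
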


\begin{proof}
These are \cite[Theorem 2.7 and Proposition 2.5]{HMP}.
\end{proof}

In particular, the right class of a right $n$-cotorsion pair is closed under extensions, finite direct sums and direct summands. For $n=1$, the simultaneous left and right conditions recover complete cotorsion pairs; see \cite{HMP}. We shall also use the standard criterion that a cotorsion pair in an abelian category with enough projectives and injectives is complete once every object has a special preenvelope by its right class; see \cite{EnochsJenda,GobelTrlifaj} and \cite[Proposition 2.4]{Hu}.

\begin{definition}\label{def:exact-equivalence}
Let \(\mathcal E\) and \(\mathcal F\) be abelian categories. A functor
$
\Phi:\mathcal E\to\mathcal F
$
is called an exact equivalence if \(\Phi\) is fully faithful, essentially surjective, and sends every short exact sequence
$
0\to X'\to X\to X''\to0
$
in \(\mathcal E\) to a short exact sequence
$
0\to\Phi(X')\to\Phi(X)\to\Phi(X'')\to0
$
in \(\mathcal F\). In this case, a quasi-inverse of \(\Phi\) is a functor
$
\Psi:\mathcal F\to\mathcal E
$
together with natural isomorphisms
\[
\Psi\Phi\cong1_{\mathcal E},\qquad
\Phi\Psi\cong1_{\mathcal F}.
\]
See \cite[Chapter IV, Section 4]{MacLane}.
\end{definition}

\begin{lemma}\label{lem:exact-equivalence}
Let
\[
\Phi:\mathcal E\to\mathcal F
\]
be an exact equivalence of abelian categories, and let
\(\Psi:\mathcal F\to\mathcal E\) be a quasi-inverse. Then \(\Phi\)
and \(\Psi\) preserve and reflect short exact sequences, direct
summands, finite resolutions and finite coresolutions. Moreover, for
all \(X,Y\in\mathcal E\) and \(i\geq0\), there are natural
isomorphisms
\[
\Ext_{\mathcal F}^i(\Phi(X),\Phi(Y))
\cong\Ext_{\mathcal E}^i(X,Y).
\]
Consequently, exact equivalences preserve resolving and coresolving
subcategories, right and left \(n\)-cotorsion pairs, and complete or
hereditary cotorsion pairs, see \cite[Chapter IV, Section 4]{MacLane} and
\cite[Chapter 2]{Weibel}.
\end{lemma}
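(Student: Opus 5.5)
The plan is to reduce everything to two facts: $\Phi$ and $\Psi$ are exact, and $\Phi$ is fully faithful. Since $\Phi$ is an exact equivalence, its quasi-inverse $\Psi$ is also an equivalence. Being an equivalence between abelian categories, $\Psi$ is additive and preserves kernels and cokernels, because these are defined by universal properties. Hence $\Psi$ is exact as well. Therefore both functors carry short exact sequences to short exact sequences. They also reflect them: if $\Phi$ of a sequence is short exact, then applying $\Psi$ and using $\Psi\Phi\cong 1_{\mathcal E}$ shows that the original sequence is isomorphic to a short exact sequence. Direct summands are preserved and reflected because additive functors preserve split idempotent data and the natural isomorphisms transport summands back. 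Finite resolutions and coresolutions are handled the same way, since they are spliced from short exact sequences.

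For the Ext isomorphism I would use Yoneda Ext, so that no projectives are needed in the comparison. An $i$-extension $0\to Y\to E_1\to\cdots\to E_i\to X\to 0$ is sent by $\Phi$ to an $i$-extension of $\Phi(X)$ by $\Phi(Y)$. This map respects the Yoneda equivalence relation, which is generated by morphisms of exact sequences, and it respects Baer sum, which is built from pullbacks and pushouts that exact equivalences preserve. Applying $\Psi$ and then composing with the natural isomorphisms gives an inverse map. Naturality in $X$ and $Y$ follows because pullback and pushout along morphisms commute with $\Phi$ up to canonical isomorphism. For $i=0$ the isomorphism is just full faithfulness. Alternatively, one can note that $\Phi$ preserves projectives, since $\mathrm{Hom}(\Phi P,-)\cong\mathrm{Hom}(P,\Psi -)$ is exact, and then compute $\Ext$ through projective resolutions, which $\Phi$ carries to projective resolutions.

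The consequences then follow formally, and I would check them one at a time. Projectives and injectives are preserved, by the adjunction argument above. Closure under extensions, kernels of epimorphisms and cokernels of monomorphisms transfers through the preservation and reflection of short exact sequences. Hence resolving and coresolving subcategories are preserved, provided we replace $\Phi(\mathcal M)$ by its isomorphism closure, which is the paper's standing convention. The orthogonality conditions $\Ext^i(\mathcal X,\mathcal Y)=0$ transfer by the Ext isomorphism. The approximation sequences $0\to B\to Y_B\to X_B\to 0$ with $X_B\in\mathcal X^\vee_{n-1}$ transfer as follows: given an object $F$ of $\mathcal F$, take the approximation sequence of $\Psi F$ in $\mathcal E$ and apply $\Phi$, using $\Phi\Psi F\cong F$. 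Completeness and heredity transfer by the same arguments.

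The main obstacle is making the Yoneda comparison rigorous: showing that the induced map is well defined on equivalence classes and is a group homomorphism. I would avoid this by using the projective-resolution argument whenever enough projectives exist, which is the paper's standing assumption.
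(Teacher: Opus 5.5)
Your proposal is correct. The paper gives no argument of its own for this lemma and simply cites Mac Lane and Weibel, and your sketch is the standard argument behind those citations: $\Psi$ is exact because equivalences preserve kernels and cokernels, reflection follows from $\Psi\Phi\cong 1_{\mathcal E}$, projectives are preserved via $\operatorname{Hom}(\Phi P,-)\cong\operatorname{Hom}(P,\Psi -)$ so that $\Ext$ can be computed on transported projective resolutions, and approximation sequences are transported through $\Phi\Psi\cong 1_{\mathcal F}$ using isomorphism-closed classes. Since the paper assumes throughout that there are enough projectives, the projective-resolution route suffices, and you can drop the Yoneda comparison you flagged as the main obstacle.
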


\section{The $\eta$-Loewy filtration and homological reduction}
We first extract the iterated structure in the image-cokernel decomposition \cite[Lemma 3.14]{Hu}.

\begin{definition}\label{def:iterated-action}
For $r\geq1$, define natural transformations $\eta^{[r]}:\sF^r\to\sF$ by
\[
\eta^{[1]}=1_{\sF},\qquad \eta^{[r+1]}=\eta\circ\sF(\eta^{[r]}).
\]
For $A=(B,f)\in\cA=\cB\ltimes_\eta\sF$, define morphisms $f^{[r]}:\sF^r(B)\to B$ by
\[
f^{[1]}=f,
\qquad
f^{[r+1]}=f\sF(f^{[r]}).
\]
\end{definition}

\begin{lemma}\label{lem:iterated-action}
Let $A=(B,f)\in\cA$. For all $r,p,q\geq1$ one has
\[
f^{[r]}=f\eta^{[r]}_B
\]
and
\[
f^{[p+q]}=f^{[p]}\sF^p(f^{[q]}).
\]
In particular, the morphism $f^{[r]}$ is independent of the placement of parentheses in the $r$-fold product of $f$.
\end{lemma}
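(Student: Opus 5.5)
We prove the first identity by induction on $r$, using only the defining relation $f\sF(f)=f\eta_B$ and naturality of $\eta$. For $r=1$ both sides equal $f$, since $\eta^{[1]}=1_\sF$. Assume $f^{[r]}=f\eta^{[r]}_B$. Then
\[
f^{[r+1]}=f\sF(f^{[r]})=f\sF(f)\sF(\eta^{[r]}_B)=f\eta_B\sF(\eta^{[r]}_B)=f\eta^{[r+1]}_B,
\]
where we used functoriality of $\sF$ in the second equality, the structure relation in the third, and the recursive definition of $\eta^{[r+1]}$ in the last. Naturality of $\eta$ is not even needed here.

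For the second identity we induct on $p$ with $q$ fixed. The case $p=1$ is the definition $f^{[1+q]}=f\sF(f^{[q]})$. Assume $f^{[p+q]}=f^{[p]}\sF^p(f^{[q]})$. Then
\[
f^{[p+1+q]}=f\sF(f^{[p+q]})=f\sF(f^{[p]})\sF^{p+1}(f^{[q]})=f^{[p+1]}\sF^{p+1}(f^{[q]}),
\]
again using only functoriality and the definition. So this identity is purely formal.

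The final assertion is that any parenthesization of the $r$-fold product of $f$ gives $f^{[r]}$. Here a parenthesization is a binary tree whose leaves are copies of $f$, and a node combining $g:\sF^a(B)\to B$ and $h:\sF^b(B)\to B$ produces $g\sF^a(h):\sF^{a+b}(B)\to B$. We argue by strong induction on the number of leaves. The top split of the tree gives pieces with $p$ and $q$ leaves. By induction these pieces equal $f^{[p]}$ and $f^{[q]}$, so the whole product is $f^{[p]}\sF^p(f^{[q]})=f^{[p+q]}$ by the second identity.

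The only point needing care is fixing this convention for composing bracketings, so that the general associativity argument applies. All computations are routine. The associativity of $\eta$ would be needed only if one wanted the analogous bracket-independence statement for $\eta^{[r]}$ itself.
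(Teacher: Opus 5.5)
Your proof is correct and takes the paper's route. The first identity is the same induction, using $f\sF(f)=f\eta_B$ and the recursive definition of $\eta^{[r+1]}$. Your induction on $p$ for $f^{[p+q]}=f^{[p]}\sF^p(f^{[q]})$ is the formal version of the paper's expansion $f^{[r]}=f\sF(f)\cdots\sF^{r-1}(f)$ followed by concatenation of factors, and your binary-tree induction spells out the bracket-independence that the paper leaves as ``in particular.''
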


\begin{proof}
The first equality is proved by induction. It is clear for $r=1$. If it holds for $r$, then
\[
f^{[r+1]}=f\sF(f\eta_B^{[r]})=f\sF(f)\sF(\eta_B^{[r]})=f\eta_B\sF(\eta_B^{[r]})=f\eta_B^{[r+1]}.
\]
The recursive definition also gives
\[
f^{[r]}=f\sF(f)\cdots\sF^{r-1}(f).
\]
Concatenating the factors yields $f^{[p+q]}=f^{[p]}\sF^p(f^{[q]})$.
\end{proof}

Let $A=(B,f)$. Write $f=i_Aq_A$ with $q_A:\sF(B)\twoheadrightarrow\Img f$ and $i_A:\Img f\rightarrowtail B$.

\begin{proposition}\label{prop:image-functor}
For \(A=(B,f)\in\cA=\cB\ltimes_\eta\sF\), write \(f=i_Aq_A\), where
\[
q_A:\sF(B)\twoheadrightarrow\Img f,\qquad i_A:\Img f\rightarrowtail B
\]
is the image factorization of \(f\). Let \(g_A=q_A\sF(i_A):\sF(\Img f)\to\Img f\). Then the assignment
\[
\sI(A)=\sI(B,f)=(\Img f,g_A)
\]
defines an additive functor \(\sI:\cA\to\cA\). Moreover, there is a natural short exact sequence of functors from \(\cA\) to \(\cA\):
\[
0\longrightarrow\sI\xrightarrow{\iota}1_{\cA}\xrightarrow{\rho}\sZ\sC\longrightarrow0,
\]
where \(1_{\cA}\) is the identity functor and \(\sZ\sC\) denotes the composite functor
\[
\cA\xrightarrow{\sC}\cB\xrightarrow{\sZ}\cA.
\]
For every \(A=(B,f)\), the component of this sequence is the following sequence
\[
0\longrightarrow(\Img f,g_A)\xrightarrow{i_A}(B,f)\xrightarrow{\rho_A}\sZ(\Coker f)\longrightarrow0.
\]
\end{proposition}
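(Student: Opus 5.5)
The plan is to verify, in order, that $(\Img f,g_A)$ is an object of $\cA$, that $i_A$ is a morphism of $\cA$, that $\sI$ is a functor, and that the sequence is exact and natural.

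\emph{$\sI(A)$ is an object.} We must show $g_A\sF(g_A)=g_A\eta_{\Img f}$. Since $i_A$ is a monomorphism, it suffices to check this after composing with $i_A$. First,
\[
i_Ag_A=i_Aq_A\sF(i_A)=f\sF(i_A),
\]
so $i_A$ is a morphism $(\Img f,g_A)\to(B,f)$ once $g_A$ is known to be a structure map. Using $i_Ag_A=f\sF(i_A)$ twice gives
\[
i_Ag_A\sF(g_A)=f\sF(i_Ag_A)=f\sF(f)\sF^2(i_A)=f\eta_B\sF^2(i_A).
\]
Naturality of $\eta$ gives $\eta_B\sF^2(i_A)=\sF(i_A)\eta_{\Img f}$, so the right-hand side equals $f\sF(i_A)\eta_{\Img f}=i_Ag_A\eta_{\Img f}$. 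Cancelling the monomorphism $i_A$ yields the identity.

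\emph{Functoriality.} For $\alpha:(B,f)\to(B',f')$, the relation $\alpha f=f'\sF(\alpha)$ shows that $\alpha$ maps $\Img f$ into $\Img f'$. Concretely, $\alpha i_Aq_A=i_{A'}q_{A'}\sF(\alpha)$. Since $q_A$ is an epimorphism and $i_{A'}$ is a monomorphism, the functoriality of image factorizations in an abelian category gives a unique $\sI(\alpha):\Img f\to\Img f'$ with
\[
i_{A'}\sI(\alpha)=\alpha i_A,\qquad \sI(\alpha)q_A=q_{A'}\sF(\alpha).
\]
To check that $\sI(\alpha)$ is a morphism of $\cA$, compose with the monomorphism $i_{A'}$. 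One side gives $i_{A'}\sI(\alpha)g_A=\alpha f\sF(i_A)=f'\sF(\alpha i_A)$. The other gives $i_{A'}g_{A'}\sF(\sI(\alpha))=f'\sF(i_{A'}\sI(\alpha))$, which agrees with the first. Uniqueness gives preservation of identities and composites, and additivity follows because $\alpha\mapsto\sI(\alpha)$ is characterized by a linear equation with $i_{A'}$ monic. The first formula above says exactly that $\iota=(i_A)_A$ is natural.

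\emph{The cokernel term.} Let $\rho_A:B\to\Coker f$ be the canonical map. It is a morphism $(B,f)\to(\Coker f,0)$ because $\rho_Af=0=0\cdot\sF(\rho_A)$. Naturality of $\rho$ is the definition of $\sC(\alpha)$ as the induced map on cokernels.

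\emph{Exactness.} By Lemma~\ref{lem:1} it suffices to check exactness in $\cB$ of $0\to\Img f\to B\to\Coker f\to0$, which is the standard image--cokernel sequence. The only genuinely non-formal step is the structure-map verification for $g_A$, which rests on naturality of $\eta$ and on cancelling the monomorphism $i_A$; everything else is routine diagram chasing.
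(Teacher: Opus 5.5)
Your proof is correct and follows essentially the same route as the paper: you define $\sI(\alpha)$ by a universal property against the monomorphism $i_{A'}$, check $\cA$-compatibility, functoriality and additivity by cancelling $i_{A'}$, and read off exactness in $\cB$ via Lemma~\ref{lem:1}. The differences are minor. You verify the object axiom $g_A\sF(g_A)=g_A\eta_{\Img f}$ directly from naturality of $\eta$, where the paper cites \cite[Lemma 3.14]{Hu} for this and for exactness. You also use the diagonal fill-in for epi--mono factorizations, where the paper uses that $i_{A'}$ is the kernel of $\rho_{A'}$. Both changes are harmless, and your version is more self-contained.
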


\begin{proof}
By \cite[Lemma 3.14]{Hu}, the pair \((\Img f,g_A)\) is an object of \(\cA\), and
\[
0\to(\Img f,g_A)\xrightarrow{i_A}(B,f)\xrightarrow{\rho_A}\sZ(\Coker f)\to0
\]
is exact in \(\cA\). We define \(\sI\) on morphisms. Let
\[
\alpha:(B,f)\to(B',f')
\]
be a morphism in \(\cA\). Thus \(\alpha f=f'\sF(\alpha)\). Write
\[
f=i_Aq_A,\qquad f'=i_{A'}q_{A'},
\]
where \(q_A,q_{A'}\) are epimorphisms and \(i_A,i_{A'}\) are the canonical monomorphisms into \(B,B'\), respectively. Then
\[
\alpha i_Aq_A=\alpha f=f'\sF(\alpha)=i_{A'}q_{A'}\sF(\alpha).
\]
Composing with \(\rho_{A'}:B'\to\Coker f'\), we obtain
\[
\rho_{A'}\alpha i_Aq_A=\rho_{A'}i_{A'}q_{A'}\sF(\alpha)=0.
\]
Since \(q_A\) is epic, we get
\[
\rho_{A'}\alpha i_A=0.
\]
Thus \(\alpha i_A:\Img f\to B'\) factors through the kernel of \(\rho_{A'}\). Since
\(i_{A'}:\Img f'\to B'\) is the kernel of \(\rho_{A'}\), the universal property of kernels, as expressed by the diagram
\[
\begin{array}{ccccc}
&&\Img f&&\\
&&{\scriptstyle \alpha i_A}\downarrow&&\\
\Img f'&\xrightarrow{i_{A'}}&B'&\xrightarrow{\rho_{A'}}&\Coker f',
\end{array}
\]
gives a unique morphism
$
\sI(\alpha):\Img f\to\Img f'
$
such that
$
i_{A'}\sI(\alpha)=\alpha i_A.
$

We next prove that \(\sI(\alpha)\) is a morphism in \(\cA\), namely
\[
\sI(\alpha)g_A=g_{A'}\sF(\sI(\alpha)).
\]
It is enough to verify this after composing with the monomorphism \(i_{A'}\). Using \(i_{A'}\sI(\alpha)=\alpha i_A\), the identities \(i_Ag_A=f\sF(i_A)\) and \(i_{A'}g_{A'}=f'\sF(i_{A'})\), and the equality \(\alpha f=f'\sF(\alpha)\), we get
\[
\begin{aligned}
i_{A'}\sI(\alpha)g_A
&=\alpha i_Ag_A\\
&=\alpha f\sF(i_A)\\
&=f'\sF(\alpha)\sF(i_A)\\
&=f'\sF(\alpha i_A)\\
&=f'\sF(i_{A'}\sI(\alpha))\\
&=f'\sF(i_{A'})\sF(\sI(\alpha))\\
&=i_{A'}g_{A'}\sF(\sI(\alpha)).
\end{aligned}
\]
Since \(i_{A'}\) is monic, we obtain
\[
\sI(\alpha)g_A=g_{A'}\sF(\sI(\alpha)).
\]
Thus \(\sI(\alpha):(\Img f,g_A)\to(\Img f',g_{A'})\) is a morphism in \(\cA\).

The identity law follows from uniqueness: both \(\sI(1_A)\) and \(1_{\sI(A)}\) become \(i_A\) after composing with \(i_A:\Img f\to B\). If \(\alpha:A\to A'\) and \(\beta:A'\to A''\) are composable, then both \(\sI(\beta\alpha)\) and \(\sI(\beta)\sI(\alpha)\) become \(\beta\alpha i_A\) after composing with \(i_{A''}\); hence they are equal. Thus \(\sI\) is a functor. Similarly, for morphisms \(\alpha,\beta:A\to A'\), both \(\sI(\alpha+\beta)\) and \(\sI(\alpha)+\sI(\beta)\) become \((\alpha+\beta)i_A\) after composing with \(i_{A'}\), so \(\sI\) is additive.

Finally, the equalities
\[
i_{A'}\sI(\alpha)=\alpha i_A,\qquad \sC(\alpha)\rho_A=\rho_{A'}\alpha
\]
show the naturality of \(\iota\) and \(\rho\). The exactness in \(\cA\) is detected by the exactness of the underlying sequence in \(\cB\). Hence
\[
0\to\sI\xrightarrow{\iota}1_{\cA}\xrightarrow{\rho}\sZ\sC\to0
\]
is a natural short exact sequence of functors.
\end{proof}

\begin{definition}\label{def:eta-Loewy}
For \(A\in\cA\), the descending chain
\[
A=\sI^0(A)\supseteq\sI(A)\supseteq\sI^2(A)\supseteq\cdots
\]
is called the \(\eta\)-Loewy filtration of \(A\). The object \(A\) is
called image-nilpotent if \(\sI^d(A)=0\) for some \(d\geq0\). Its
\(\eta\)-Loewy length is
\[
\ellength(A)=\min\{d\geq0\mid\sI^d(A)=0\}.
\]
If \(A\) is not image-nilpotent, we set \(\ellength(A)=\infty\). Let $\mathcal N_\eta=\{A\in\cA\mid\ellength(A)<\infty\}$
be the full subcategory of image-nilpotent objects. The extension
\(\cA=\cB\ltimes_\eta\sF\) is called image-nilpotent if every object
of \(\cA\) is image-nilpotent, equivalently, if $\mathcal N_\eta=\cA.$ The extension \(\cA\) is called uniformly image-nilpotent of index
at most \(d\) if $\sI^d(A)=0$ for every \(A\in\cA\).
\end{definition}

\begin{theorem}\label{thm:power-image}
Let $A=(B,f)\in\cA$. For every $r\geq1$, the underlying object of $\sI^r(A)$ is canonically isomorphic to $\Img f^{[r]}$. Consequently,
\[
\sI^r(A)=0\quad\Longleftrightarrow\quad f^{[r]}=0.
\]
Moreover, for every $r\geq0$ there is a natural short exact sequence
\[
0\longrightarrow\sI^{r+1}(A)\longrightarrow\sI^r(A)\longrightarrow\sZ(C_r(A))\longrightarrow0,
\]
where $C_r(A)=\sC(\sI^r(A))$.
\end{theorem}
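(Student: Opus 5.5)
We argue by induction on \(r\), tracking the structure map of \(\sI^r(A)\) together with its inclusion into \(B\). Write \(\sI^r(A)=(B_r,g_r)\) and let \(j_r:B_r\rightarrowtail B\) be the composite of the monomorphisms \(i_{\sI^{s}(A)}\) for \(s<r\), as given by Proposition \ref{prop:image-functor}. The claim to prove by induction is that
\[
j_r\text{ is the image of } f^{[r]}, \qquad f^{[r]}=j_r\,p_r \text{ with } p_r:\sF^r(B)\twoheadrightarrow B_r \text{ epic},
\]
and moreover that
\[
j_r\,g_r=f\,\sF(j_r), \qquad \text{equivalently } j_r \text{ is a morphism } (B_r,g_r)\to(B,f) \text{ in } \cA .
\]
The second identity is automatic, because \(j_r\) is a composite of morphisms of \(\cA\).

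For \(r=1\) the claim is the image factorization \(f=i_Aq_A\) itself. For the inductive step, Proposition \ref{prop:image-functor} applied to \(\sI^r(A)\) identifies \(B_{r+1}\) with \(\Img g_r\), and \(j_{r+1}=j_r i_{\sI^r(A)}\). Since \(\sF\) is right exact, \(\sF(p_r):\sF^{r+1}(B)\to\sF(B_r)\) is still an epimorphism. Using \(j_rg_r=f\sF(j_r)\) we compute
\[
j_r\,g_r\,\sF(p_r)=f\,\sF(j_r)\,\sF(p_r)=f\,\sF(f^{[r]})=f^{[r+1]}.
\]
Since \(\sF(p_r)\) is epic, \(\Img f^{[r+1]}=\Img(j_rg_r)\). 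Since \(j_r\) is monic, \(\Img(j_rg_r)\) is \(j_r(\Img g_r)\), which is \(B_{r+1}\) embedded via \(j_{r+1}\). This gives the canonical isomorphism and the new epimorphism \(p_{r+1}=q_{\sI^r(A)}\sF(p_r)\). The key point to handle with care is exactly this use of right exactness: \(\sF\) preserves epimorphisms, so the image is not shrunk by the precomposition.

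The equivalence \(\sI^r(A)=0\iff f^{[r]}=0\) is immediate, since an object of \(\cA\) is zero iff its underlying object is zero (Lemma \ref{lem:1}), and \(\Img f^{[r]}=0\iff f^{[r]}=0\). Finally, the short exact sequence for \(r\geq0\) is just Proposition \ref{prop:image-functor} applied to the object \(\sI^r(A)\). It reads
\[
0\to\sI(\sI^r(A))\to\sI^r(A)\to\sZ\sC(\sI^r(A))\to0,
\]
and it is natural in \(A\) because \(\iota\) and \(\rho\) are natural transformations and \(\sI^r\) is a functor (whiskering). No step looks like a genuine obstacle beyond correctly setting up the inductive hypothesis that includes the epimorphism \(p_r\).
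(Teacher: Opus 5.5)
Your proposal is correct and follows essentially the same route as the paper: induction on $r$, using that the canonical inclusion $\sI^r(A)\rightarrowtail A$ is a morphism in $\cA$, the recursion $f^{[r+1]}=f\sF(f^{[r]})$, and right exactness of $\sF$ to exhibit $f^{[r+1]}$ as an epimorphism followed by the monomorphism $\Img g_r\rightarrowtail B$; the exact sequence then comes from Proposition \ref{prop:image-functor} applied to $\sI^r(A)$. Your explicit bookkeeping of the composite inclusion and the epimorphism $p_r$ in the inductive hypothesis simply makes precise the paper's phrase ``canonically isomorphic.''
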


\begin{proof}
The assertion is immediate for \(r=1\). Assume that the underlying object of \(\sI^r(A)\) is canonically isomorphic to
$
B_r=\Img f^{[r]}.
$
Let
\[
\sF^r(B)\xrightarrow{q_r}B_r\xrightarrow{i_r}B
\]
be the image factorization of \(f^{[r]}\). The canonical morphism \(\sI^r(A)\to A\) has underlying morphism \(i_r\), hence, if
$
g_r:\sF(B_r)\to B_r
$
denotes the structure morphism of \(\sI^r(A)\), then
$
i_rg_r=f\sF(i_r).
$
Let
\[
\sF(B_r)\xrightarrow{p_r}\Img g_r\xrightarrow{j_r}B_r
\]
be the image factorization of \(g_r\). By Lemma \ref{lem:iterated-action},
\[
\begin{aligned}
f^{[r+1]}
&=f\sF(f^{[r]})
=f\sF(i_rq_r)\\
&=f\sF(i_r)\sF(q_r)
=i_rg_r\sF(q_r)
=i_rj_rp_r\sF(q_r).
\end{aligned}
\]
Thus \(f^{[r+1]}\) factors as
$
\sF^{r+1}(B)\xrightarrow{p_r\sF(q_r)}
\Img g_r\xrightarrow{i_rj_r}B.
$
Since \(q_r\) is epic and \(\sF\) is right exact, \(\sF(q_r)\) is epic, hence \(p_r\sF(q_r)\) is epic. Moreover, \(i_rj_r\) is monic. Therefore the displayed factorization is an image factorization of \(f^{[r+1]}\), and consequently
$
\Img f^{[r+1]}\cong\Img g_r.
$
By definition, \(\Img g_r\) is the underlying object of
$
\sI(\sI^r(A))=\sI^{r+1}(A).
$
This completes the induction.

The first assertion implies
\[
\sU(\sI^r(A))\cong\Img f^{[r]}.
\]
Since \(\sU\) detects zero objects and a morphism in an abelian category is zero if and only if its image is zero, we obtain
\[
\sI^r(A)=0\quad\Longleftrightarrow\quad f^{[r]}=0.
\]

Finally, applying Proposition \ref{prop:image-functor} to \(\sI^r(A)\) gives
\[
0\to\sI(\sI^r(A))\to\sI^r(A)\to\sZ\sC(\sI^r(A))\to0.
\]
Since
\[
\sI(\sI^r(A))=\sI^{r+1}(A),\qquad C_r(A)=\sC(\sI^r(A)),
\]
this is precisely
\[
0\to\sI^{r+1}(A)\to\sI^r(A)\to\sZ(C_r(A))\to0.
\]
\end{proof}

The next result identifies uniform image-nilpotence with nilpotence of the multiplication $\eta$ itself.

\begin{theorem}\label{thm:nilpotence-criterion}
Let $d\geq1$. The following conditions are equivalent:
\begin{enumerate}
\item $\cA=\cB\ltimes_{\eta}\sF$ is uniformly image-nilpotent of index at most $d$;
\item $\eta^{[d]}:\sF^d\to\sF$ is the zero natural transformation.
\end{enumerate}
In particular, every right trivial extension is uniformly image-nilpotent of index at most two.
\end{theorem}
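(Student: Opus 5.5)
The plan is to reduce both conditions to statements about the iterated structure maps $f^{[d]}$ via Theorem \ref{thm:power-image} and Lemma \ref{lem:iterated-action}. By Theorem \ref{thm:power-image}, $\sI^d(A)=0$ if and only if $f^{[d]}=0$. By Lemma \ref{lem:iterated-action}, $f^{[d]}=f\,\eta^{[d]}_B$. So condition (1) is equivalent to
\[
f\,\eta^{[d]}_B=0\qquad\text{for every object }(B,f)\in\cA .
\]

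For (2)$\Rightarrow$(1) there is nothing more to do. If $\eta^{[d]}=0$, then $f^{[d]}=f\eta^{[d]}_B=0$ for every $(B,f)$, hence $\sI^d(A)=0$ for every $A\in\cA$.

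For (1)$\Rightarrow$(2), I will test the hypothesis on the free objects $\sT(X)=(X\oplus\sF(X),t_X)$ for arbitrary $X\in\cB$. Hypothesis (1) gives $t_X\,\eta^{[d]}_{X\oplus\sF(X)}=0$. Let $\iota_1:X\to X\oplus\sF(X)$ be the inclusion of the first summand. Precomposing with $\sF^d(\iota_1)$ and using naturality of $\eta^{[d]}:\sF^d\to\sF$, we get
\[
0=t_X\,\eta^{[d]}_{X\oplus\sF(X)}\,\sF^d(\iota_1)=t_X\,\sF(\iota_1)\,\eta^{[d]}_X .
\]
Now $\sF(\iota_1)$ is the inclusion of $\sF(X)$ as the first summand of $\sF(X)\oplus\sF^2(X)$. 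From the matrix form of $t_X$, the composite $t_X\sF(\iota_1):\sF(X)\to X\oplus\sF(X)$ is the column $\left(\begin{smallmatrix}0\\1\end{smallmatrix}\right)$, which is a split monomorphism. Cancelling it yields $\eta^{[d]}_X=0$. Since $X$ is arbitrary, $\eta^{[d]}=0$.

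For the final assertion, take $\eta=0$. Then $\eta^{[2]}=\eta\circ\sF(\eta^{[1]})=0$, and the equivalence just proved with $d=2$ gives uniform image-nilpotence of index at most two.

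The only step needing care is choosing the right test object and the right component of $t_X^{[d]}$ in (1)$\Rightarrow$(2), namely checking that $t_X\sF(\iota_1)$ is the split mono $\left(\begin{smallmatrix}0\\1\end{smallmatrix}\right)$. This is read off directly from the definition of $t_X$, so I do not expect a genuine obstacle. Note also that the case $d=1$ is consistent: $\eta^{[1]}=1_\sF=0$ means $\sF=0$, in which case every structure map vanishes and $\sI(A)=0$ for all $A$.
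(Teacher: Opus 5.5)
Your proof is correct and follows the paper's route. For (2)$\Rightarrow$(1) you use $f^{[d]}=f\eta^{[d]}_B$ together with Theorem \ref{thm:power-image}; for (1)$\Rightarrow$(2) you test on $\sT(X)$ and read off the component $\eta^{[d]}_X$ of $t_X^{[d]}$. The only difference is that you isolate this component using naturality of $\eta^{[d]}$ along the first-summand inclusion, whereas the paper computes the full matrix of $t_B^{[r]}$ by induction on $r$.
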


\begin{proof}
Assume $\eta^{[d]}=0$. For $A=(B,f)$, Lemma \ref{lem:iterated-action} gives $f^{[d]}=f\eta_B^{[d]}=0$, and Theorem \ref{thm:power-image} yields $\sI^d(A)=0$.

Conversely, suppose $\sI^d(A)=0$ for every $A$. Fix $B\in\cB$ and consider
\[
\sT(B)=\left(B\oplus\sF(B),t_B\right),\qquad
t_B=\begin{pmatrix}0&0\\1&\eta_B\end{pmatrix}.
\]
We claim that, under the canonical decomposition
\[
\sF^r(B\oplus\sF(B))\cong\sF^r(B)\oplus\sF^{r+1}(B),
\]
one has
\begin{equation}\label{eq:power-of-t}
t_B^{[r]}=\begin{pmatrix}0&0\\ \eta_B^{[r]}&\eta_B^{[r+1]}\end{pmatrix}.
\end{equation}
For $r=1$, this is the definition of $t_B$. If \eqref{eq:power-of-t} holds for $r$, then
\[
\sF(t_B^{[r]})=\begin{pmatrix}0&0\\ \sF(\eta_B^{[r]})&\sF(\eta_B^{[r+1]})\end{pmatrix},
\]
and therefore
\[
t_B^{[r+1]}=t_B\sF(t_B^{[r]})=\begin{pmatrix}0&0\\ \eta_B\sF(\eta_B^{[r]})&\eta_B\sF(\eta_B^{[r+1]})\end{pmatrix}
=\begin{pmatrix}0&0\\ \eta_B^{[r+1]}&\eta_B^{[r+2]}\end{pmatrix}.
\]
Thus \eqref{eq:power-of-t} holds for all $r$. Since $\sI^d(\sT(B))=0$, Theorem \ref{thm:power-image} gives $t_B^{[d]}=0$, and its lower-left component is $\eta_B^{[d]}$. Hence $\eta_B^{[d]}=0$ for every $B$, so $\eta^{[d]}=0$.

If $\eta=0$, then $\eta^{[2]}=0$, and the last assertion follows.
\end{proof}

\begin{remark}\label{rem:Hu-trivial-index}
The last assertion recovers, without assuming $\sF^2=0$, the two-step decomposition implicit in \cite[Lemma 3.14]{Hu}: for $\eta=0$, the first image object has zero structure morphism. Theorem \ref{thm:nilpotence-criterion} shows that this phenomenon is the degree-two instance of nilpotence of the iterated multiplication.
\end{remark}

\begin{definition}\label{def:Serre-subcategory}
A full subcategory \(\mathcal S\) of the abelian category \(\cA\) is called a Serre subcategory if it is closed under subobjects, quotient objects and extensions. 
\end{definition}

\begin{theorem}\label{thm:Serre-nilpotent}
Let
\[
0\longrightarrow A'\xrightarrow{u}A\xrightarrow{v}A''\longrightarrow0
\]
be a short exact sequence in $\cA$. Then
\[
\ellength(A'),\ellength(A'')\leq\ellength(A)
\]
and
\[
\ellength(A)\leq\ellength(A')+\ellength(A'').
\]
Consequently, the full subcategory $\cN_{\eta}$ of image-nilpotent objects is a Serre subcategory of $\cA$.
\end{theorem}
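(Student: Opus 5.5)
We will use two facts about the functor $\sI$ from Proposition \ref{prop:image-functor}: it preserves monomorphisms, and it preserves epimorphisms. For the first, let $u:A'\to A$ be a monomorphism. Then $\iota_A\sI(u)=u\,\iota_{A'}$ is a composite of monomorphisms, so $\sI(u)$ is monic. For the second, let $v:A\to A''$ be an epimorphism, with $A=(B,f)$ and $A''=(B'',f'')$. Then $f''\sF(v)=vf$. Since $\sF$ is right exact, $\sF(v)$ is epic, so $\Img f''=\Img(f''\sF(v))=\Img(vf)=v(\Img f)$. Hence $\sI(v)$ is epic. By induction on $r$, $\sI^r(u)$ is monic and $\sI^r(v)$ is epic for all $r$. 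Taking $r=\ellength(A)$, if $\sI^r(A)=0$ then $\sI^r(A')$ embeds in $0$ and $\sI^r(A'')$ is a quotient of $0$. This gives $\ellength(A'),\ellength(A'')\le\ellength(A)$. If $\ellength(A)=\infty$ there is nothing to prove.

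For the upper bound, put $p=\ellength(A'')$ and $q=\ellength(A')$, both finite, since otherwise the inequality is trivial. We will show $f^{[p+q]}=0$ and then apply Theorem \ref{thm:power-image}. First, $f''^{[p]}=0$ by Theorem \ref{thm:power-image}. Naturality gives $vf^{[p]}=f''^{[p]}\sF^p(v)=0$, so $f^{[p]}$ factors through $\ker v=u$. Write $f^{[p]}=u\,h$ with $h:\sF^p(B)\to B'$. By Lemma \ref{lem:iterated-action},
\[
f^{[p+q]}=f^{[q]}\sF^q(f^{[p]})=f^{[q]}\sF^q(u)\sF^q(h).
\]
Since $u$ is a morphism in $\cA$, iterating $uf'=f\sF(u)$ gives $f^{[q]}\sF^q(u)=uf'^{[q]}$. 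Moreover $f'^{[q]}=0$ by Theorem \ref{thm:power-image}. Therefore $f^{[p+q]}=0$, and hence $\sI^{p+q}(A)=0$.

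The step that needs care is the order of the product: the factor through $u$ must appear on the inner side. The identity $f^{[p+q]}=f^{[q]}\sF^q(f^{[p]})$ from Lemma \ref{lem:iterated-action}, with the roles of $p$ and $q$ exchanged, supplies exactly this. The factorization of $f^{[p]}$ through $u$ is a factorization of the underlying morphism in $\cB$. This suffices, since we only need an equation in $\cB$.

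For the Serre property, closure under subobjects and quotients follows from the first inequality, and closure under extensions follows from the second, since a sum of finite lengths is finite. The zero object has length $0$, so $\cN_\eta$ is nonempty and contains $0$.
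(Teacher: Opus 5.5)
Your proof is correct and essentially the paper's: the upper bound is the same argument, with $p$ and $q$ relabelled (use $f''^{[p]}=0$ to factor $f^{[p]}$ through $u$, then place that factor inside $\sF^q(-)$ in $f^{[p+q]}=f^{[q]}\sF^q(f^{[p]})$), and for the lower bound you show that $\sI$ preserves monomorphisms and epimorphisms, which is an equivalent repackaging of the paper's cancellation via $uf'^{[r]}=f^{[r]}\sF^r(u)$ and $vf^{[r]}=f''^{[r]}\sF^r(v)$. The one thing to add is the case $p=0$ or $q=0$, where $f^{[0]}$ is undefined and Theorem \ref{thm:power-image} does not apply; the paper disposes of it separately by noting that then $A\cong A'$ or $A\cong A''$.
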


\begin{proof}  Write $A'=(B',f')$, $A=(B,f)$ and $A''=(B'',f'')$. Since $u$ and $v$ are morphisms in $\cA$, induction gives
\[
u f'^{[r]}=f^{[r]}\sF^r(u),\qquad vf^{[r]}=f''^{[r]}\sF^r(v)
\]
for every $r\geq1$. If $f^{[m]}=0$, then $u f'^{[m]}=0$, and $u$ is monic, so $f'^{[m]}=0$. Also $f''^{[m]}\sF^m(v)=0$, the morphism $\sF^m(v)$ is epic because $\sF$ is right exact, hence $f''^{[m]}=0$. The first inequalities follow from Theorem \ref{thm:power-image}.

Set \(p=\ellength(A')\) and \(q=\ellength(A'')\). If \(p=\infty\) or \(q=\infty\), the second inequality is immediate.
Hence assume that \(p,q<\infty\). If \(p=0\), then \(A'=0\) and \(A\cong A''\); if \(q=0\), then \(A''=0\) and \(A\cong A'\). In either case the required inequality is immediate. Assume that \(p,q\ge1\).
Since $f''^{[q]}=0$, one has $vf^{[q]}=0$, and therefore $f^{[q]}$ factors through $u$:
\[
f^{[q]}=uh
\]
for some $h:\sF^q(B)\to B'$. Lemma \ref{lem:iterated-action} gives
\[
f^{[p+q]}=f^{[p]}\sF^p(f^{[q]})=f^{[p]}\sF^p(u)\sF^p(h)=u f'^{[p]}\sF^p(h)=0.
\]
Thus
\[
\ellength(A)\leq p+q=\ellength(A')+\ellength(A'').
\]
By Definition \ref{def:Serre-subcategory}, \(\cN_\eta\) is a Serre subcategory of \(\cA\) .
\end{proof}

For classes \(\cD,\cQ\subseteq\cA\), let
\(\operatorname{App}(\cD,\cQ)\) denote the class of all objects \(A\in\cA\)
for which there exists a short exact sequence
\[
0\to A\to D\to Q\to0
\]
with \(D\in\cD\) and \(Q\in\cQ\).

\begin{lemma}\label{lem:relative-horseshoe}
Let $\cD$ and $\cQ$ be extension-closed classes in an abelian category $\cA$ and assume
\[
\Ext^2_{\cA}(\cQ,\cD)=0.
\]
If $0\to A'\to A\to A''\to0$ is exact and $A',A''\in\operatorname{App}(\cD,\cQ)$, then $A\in\operatorname{App}(\cD,\cQ)$.
\end{lemma}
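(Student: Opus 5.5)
Fix short exact sequences
\[
0\to A'\xrightarrow{a'}D'\xrightarrow{b'}Q'\to0,\qquad 0\to A''\xrightarrow{a''}D''\xrightarrow{b''}Q''\to0
\]
with $D',D''\in\cD$ and $Q',Q''\in\cQ$. We use a horseshoe-type construction: produce a short exact sequence $0\to D'\to D\to D''\to0$ together with a monomorphism $A\to D$ compatible with $a'$ and $a''$. Then we identify the cokernel as an extension of $Q''$ by $Q'$ via the snake lemma. Since $\cD$ and $\cQ$ are extension-closed, this will give $D\in\cD$ and $Q:=\Coker(A\to D)\in\cQ$.

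\textbf{Step 1: extending $a'$ along $A'\to A$.} The obstruction is the class
\[
\delta:=\Ext^1(A'',a')(\epsilon)\in\Ext^1_{\cA}(A'',D'),
\]
where $\epsilon\in\Ext^1(A'',A')$ is the class of $0\to A'\to A\to A''\to0$. The first step is to show that we may replace $\epsilon$ by the pushout $\epsilon_1:=a'_*\epsilon$, which represents an extension $0\to D'\to E\to A''\to0$ with an induced map $A\to E$. We then want to push the right end from $A''$ out to $D''$. Applying $\Ext(-,D')$ to $0\to A''\to D''\to Q''\to0$ gives the exact sequence
\[
\Ext^1(D'',D')\to\Ext^1(A'',D')\to\Ext^2(Q'',D').
\]
The right-hand term vanishes by hypothesis, since $Q''\in\cQ$ and $D'\in\cD$. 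Hence $\epsilon_1$ lifts to some $\theta\in\Ext^1(D'',D')$ with $(a'')^*\theta=\epsilon_1$. Let $0\to D'\to D\to D''\to0$ represent $\theta$; then $D\in\cD$ by extension-closure. The relation $(a'')^*\theta=\epsilon_1=a'_*\epsilon$ yields a morphism of short exact sequences
\[
(a',\,\alpha,\,a'')\colon\bigl(0\to A'\to A\to A''\to0\bigr)\longrightarrow\bigl(0\to D'\to D\to D''\to0\bigr).
\]
This follows from the standard fact that $(a'')^*\theta=a'_*\epsilon$ holds exactly when there is a middle map $\alpha$ making both squares commute.

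\textbf{Step 2: identifying the cokernel.} Apply the snake lemma to the morphism of sequences from Step 1. Since $a'$ and $a''$ are monic, $\Ker\alpha$ sits between $\Ker a'=0$ and $\Ker a''=0$, so $\alpha$ is monic. The cokernels then form a short exact sequence
\[
0\to Q'\to\Coker\alpha\to Q''\to0,
\]
the connecting map being zero because $\Ker a''=0$. Since $\cQ$ is closed under extensions, $\Coker\alpha\in\cQ$. Therefore $0\to A\xrightarrow{\alpha}D\to\Coker\alpha\to0$ exhibits $A\in\operatorname{App}(\cD,\cQ)$.

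\textbf{Main obstacle.} The delicate point is Step 1, specifically the Yoneda-Ext bookkeeping. One must check that $(a'')^*\theta=a'_*\epsilon$ genuinely produces a compatible middle map $\alpha$, not merely an abstract equality of classes. I would verify this directly: form the pullback of $D\to D''$ along $a''$, identify it with the pushout $E$ up to a congruence of extensions, and compose $A\to E$ with the map from the pullback into $D$. Only $\Ext^1$ and $\Ext^2$ are involved, and these are available as Yoneda Ext or via enough projectives and injectives, so the long exact sequence used above is valid.
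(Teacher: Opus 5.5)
Your proof is correct. You push $\epsilon$ out along $a'$, lift $a'_*\epsilon$ to $\Ext^1_{\cA}(D'',D')$ using $\Ext^2_{\cA}(Q'',D')=0$, and apply the snake lemma to the resulting morphism of short exact sequences; this is the standard horseshoe argument for relative preenvelopes that the paper cites without giving further details. It also yields the full $3\times 3$ diagram, with rows $0\to D'\to D\to D''\to0$ and $0\to Q'\to\Coker\alpha\to Q''\to0$, which is the stronger form the paper actually uses in Theorem~\ref{thm:filtration-reduction}.
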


\begin{proof}
This is the standard horseshoe argument for relative preenvelopes, compare \cite[Theorem 3.1]{AkinciAlizade}, \cite[Proposition 2.6]{Hu} and \cite[Lemma 2.8]{LongZhang}.
\end{proof}

\begin{theorem}\label{thm:filtration-reduction}
Let $\cD,\cQ\subseteq\cA$ be extension-closed and satisfy $\Ext^2_{\cA}(\cQ,\cD)=0$. Then $\operatorname{App}(\cD,\cQ)$ is extension-closed. If $A$ is image-nilpotent and every factor $\sZ(C_r(A))$ in its $\eta$-Loewy filtration belongs to $\operatorname{App}(\cD,\cQ)$, then $A\in\operatorname{App}(\cD,\cQ)$.

More precisely, suppose $\ellength(A)=m$ and choose exact sequences
\[
0\to\sZ(C_r(A))\to D_r^{\circ}\to Q_r^{\circ}\to0,
\quad D_r^{\circ}\in\cD,
\quad Q_r^{\circ}\in\cQ,
\quad0\leq r<m.
\]
Then there is an exact sequence
\[
0\to A\to D_A\to Q_A\to0
\]
with filtrations
\[
0=D_m\subseteq D_{m-1}\subseteq\cdots\subseteq D_0=D_A,
\qquad
0=Q_m\subseteq Q_{m-1}\subseteq\cdots\subseteq Q_0=Q_A
\]
such that
\[
D_r/D_{r+1}\cong D_r^{\circ},
\qquad
Q_r/Q_{r+1}\cong Q_r^{\circ}
\]
for $0\leq r<m$.
\end{theorem}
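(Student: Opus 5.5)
The proof runs in three stages: extension-closedness of $\operatorname{App}(\cD,\cQ)$, induction up the $\eta$-Loewy filtration, and a refined horseshoe construction that tracks the filtrations on $D_A$ and $Q_A$.

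\textbf{Extension-closedness.} This is exactly Lemma \ref{lem:relative-horseshoe}. For later use we record how the horseshoe is built. Let $0\to A'\to A\to A''\to0$ be exact, with approximation sequences $0\to A'\to D'\to Q'\to0$ and $0\to A''\to D''\to Q''\to0$.

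First push out $0\to A'\to A\to A''\to 0$ along $A'\to D'$. This gives an extension $0\to D'\to E\to A''\to0$ together with a monomorphism $A\to E$ whose cokernel is $Q'$.

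Next we must extend $E\to A''$ across $A''\to D''$. The obstruction is a class of $\Ext^1(D'',D')$ that comes from $\Ext^1(A'',D')$. Using the long exact sequence of $\Ext(-,D')$ attached to $0\to A''\to D''\to Q''\to0$, the map $\Ext^1(D'',D')\to\Ext^1(A'',D')$ is surjective because the next term $\Ext^2(Q'',D')$ vanishes. Hence $E$ is the pullback of some extension $0\to D'\to D\to D''\to0$ along $A''\to D''$.

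This yields a monomorphism $A\to E\to D$ with $D\in\cD$, since $\cD$ is extension-closed. A snake-lemma/$3\times3$ argument then shows that the cokernel $Q$ sits in $0\to Q'\to Q\to Q''\to0$, so $Q\in\cQ$. The resulting diagram has exact rows $0\to A'\to D'\to Q'\to 0$ and $0\to A''\to D''\to Q''\to0$ embedded compatibly in $0\to A\to D\to Q\to0$, with $D'\subseteq D$, $D/D'\cong D''$, $Q'\subseteq Q$ and $Q/Q'\cong Q''$.

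\textbf{Induction along the filtration.} Let $\ellength(A)=m$ and apply descending induction on $r$ from $m$ down to $0$. The claim at stage $r$ is that $\sI^r(A)$ admits an approximation sequence $0\to\sI^r(A)\to D_r\to Q_r\to0$ with filtrations $0=D_m\subseteq\cdots\subseteq D_r$ and $0=Q_m\subseteq\cdots\subseteq Q_r$ whose successive factors are $D_s^\circ$ and $Q_s^\circ$ for $r\le s<m$.

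At $r=m$ we have $\sI^m(A)=0$, so we take the zero sequence. For the inductive step, Theorem \ref{thm:power-image} provides $0\to\sI^{r+1}(A)\to\sI^r(A)\to\sZ(C_r(A))\to0$. Apply the horseshoe construction with $A'=\sI^{r+1}(A)$, $A''=\sZ(C_r(A))$, the inductively constructed sequence for $A'$, and the chosen sequence with $D_r^\circ,Q_r^\circ$ for $A''$.

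The construction produces $D_r\supseteq D_{r+1}$ with $D_r/D_{r+1}\cong D_r^\circ$, and likewise $Q_r\supseteq Q_{r+1}$ with $Q_r/Q_{r+1}\cong Q_r^\circ$. The lower filtration steps are inherited because $D_{r+1}\subseteq D_r$ and $Q_{r+1}\subseteq Q_r$ are subobjects. Stage $r=0$ gives the statement, since $\sI^0(A)=A$.

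\textbf{Main obstacle.} The delicate point is the horseshoe step, because Lemma \ref{lem:relative-horseshoe} is cited only as a standard argument. Two things must be checked there:
\begin{enumerate}
\item the lifting of the extension class, which uses $\Ext^2_{\cA}(\cQ,\cD)=0$ with the $\Ext^2$ vanishing taken in $\cA$ (Yoneda Ext is fine even without enough projectives in $\cA$);
\item the identification of the middle cokernel, via the $3\times3$ lemma, as an extension of $Q''$ by $Q'$.
\end{enumerate}
Only with these in hand does the construction record the explicit subobject inclusions $D'\subseteq D$ and $Q'\subseteq Q$ needed to build the filtrations.
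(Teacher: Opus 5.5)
Your proposal is correct and follows the paper's proof. Both argue by descending induction along the $\eta$-Loewy sequences $0\to\sI^{r+1}(A)\to\sI^r(A)\to\sZ(C_r(A))\to0$, apply the relative horseshoe Lemma~\ref{lem:relative-horseshoe} at each step, and read off the filtrations from the rows of the resulting $3\times3$ diagram. The differences are cosmetic: you start the induction at $r=m$ with the zero sequence rather than at $r=m-1$, and you write out the horseshoe step that the paper only cites. That step is the pushout along $A'\to D'$, followed by lifting the class through $\Ext^1_{\cA}(D'',D')\to\Ext^1_{\cA}(A'',D')$, which is onto because $\Ext^2_{\cA}(Q'',D')=0$.
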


\begin{proof} If \(A=0\), take \(D_A=Q_A=0\). Hence assume that
\(A\neq0\), so \(m\geq1\). Extension-closedness of $\operatorname{App}(\cD,\cQ)$ is Lemma \ref{lem:relative-horseshoe}. By Theorem \ref{thm:power-image}, the $\eta$-Loewy filtration of $A$ consists of exact sequences
\begin{equation}\label{eq:Loewy-step}
0\longrightarrow\sI^{r+1}(A)\longrightarrow\sI^r(A)\longrightarrow\sZ(C_r(A))\longrightarrow0,
\quad0\leq r<m.
\end{equation}
We construct the asserted data by descending induction. Since $\sI^m(A)=0$, the sequence for $r=m-1$ identifies $\sI^{m-1}(A)$ with $\sZ(C_{m-1}(A))$; take
\[
D_{m-1}=D_{m-1}^{\circ},
\qquad
Q_{m-1}=Q_{m-1}^{\circ}.
\]
Assume inductively that there is an exact sequence
\[
0\longrightarrow \sI^{r+1}(A)\longrightarrow D_{r+1}\longrightarrow Q_{r+1}\longrightarrow0,
\]
and recall that, by hypothesis, we have chosen an exact sequence
\[
0\longrightarrow \sZ(C_r(A))\longrightarrow D_r^{\circ}\longrightarrow Q_r^{\circ}\longrightarrow0.
\]
Together with the Loewy step
\[
0\longrightarrow \sI^{r+1}(A)\longrightarrow \sI^r(A)\longrightarrow \sZ(C_r(A))\longrightarrow0,
\]
Lemma \ref{lem:relative-horseshoe} yields a commutative diagram with exact rows and columns
\[
\begin{array}{ccccccccc}
&&0&&0&&0&&\\
&&\downarrow&&\downarrow&&\downarrow&&\\
0&\to&\sI^{r+1}(A)&\to&\sI^r(A)&\to&\sZ(C_r(A))&\to&0\\
&&\downarrow&&\downarrow&&\downarrow&&\\
0&\to&D_{r+1}&\to&D_r&\to&D_r^{\circ}&\to&0\\
&&\downarrow&&\downarrow&&\downarrow&&\\
0&\to&Q_{r+1}&\to&Q_r&\to&Q_r^{\circ}&\to&0\\
&&\downarrow&&\downarrow&&\downarrow&&\\
&&0&&0&&0&&
\end{array}
\]
The middle column is therefore an exact sequence
\[
0\longrightarrow \sI^r(A)\longrightarrow D_r\longrightarrow Q_r\longrightarrow0,
\]
while the second and third rows are exact sequences
\[
0\longrightarrow D_{r+1}\longrightarrow D_r\longrightarrow D_r^{\circ}\longrightarrow0,
\]
and
\[
0\longrightarrow Q_{r+1}\longrightarrow Q_r\longrightarrow Q_r^{\circ}\longrightarrow0.
\]
In particular,
\[
D_r/D_{r+1}\cong D_r^{\circ},
\qquad
Q_r/Q_{r+1}\cong Q_r^{\circ}.
\]
This completes the inductive step. Taking \(r=0\) yields the exact sequence
\[
0\to A=\sI^0(A)\to D_0\to Q_0\to0.
\]
Set \(D_A=D_0\), \(Q_A=Q_0\), and \(D_m=Q_m=0\). For every \(0\leq r<m\), the induction produces short exact sequences
\[
0\to D_{r+1}\to D_r\to D_r^{\circ}\to0,
\qquad
0\to Q_{r+1}\to Q_r\to Q_r^{\circ}\to0.
\]
Thus the monomorphisms in these sequences identify \(D_{r+1}\) and \(Q_{r+1}\) as subobjects of \(D_r\) and \(Q_r\), respectively, and give filtrations
\[
0=D_m\subseteq D_{m-1}\subseteq\cdots\subseteq D_0=D_A,
\]
\[
0=Q_m\subseteq Q_{m-1}\subseteq\cdots\subseteq Q_0=Q_A.
\]
Moreover, the corresponding cokernels give
\[
D_r/D_{r+1}\cong D_r^{\circ},
\qquad
Q_r/Q_{r+1}\cong Q_r^{\circ}
\]
for every \(0\leq r<m\). Since \(\cD\) and \(\cQ\) are extension-closed and all \(D_r^{\circ}\in\cD\), \(Q_r^{\circ}\in\cQ\), induction also yields
\[
D_A\in\cD,\qquad Q_A\in\cQ.
\]
This completes the proof.
\end{proof}

\begin{corollary}\label{cor:reduction-Z}
Assume that $\cA$ is image-nilpotent. Under the hypotheses of Theorem \ref{thm:filtration-reduction}, the following are equivalent:
\begin{enumerate}
\item $\operatorname{App}(\cD,\cQ)=\cA$;
\item $\sZ(B)\in\operatorname{App}(\cD,\cQ)$ for every $B\in\cB$.
\end{enumerate}
\end{corollary}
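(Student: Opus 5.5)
The implication (1)$\Rightarrow$(2) is immediate. If $\operatorname{App}(\cD,\cQ)=\cA$, then in particular every object $\sZ(B)=(B,0)$ with $B\in\cB$ lies in $\operatorname{App}(\cD,\cQ)$. Nothing beyond the definition of $\operatorname{App}(\cD,\cQ)$ is needed here.

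For (2)$\Rightarrow$(1) I will reduce to Theorem \ref{thm:filtration-reduction}. Take an arbitrary $A\in\cA$. Since $\cA$ is assumed image-nilpotent, $A\in\cN_\eta$, so $m=\ellength(A)<\infty$. If $m=0$ then $A=0$, and $0\to0\to0\to0\to0$ shows $A\in\operatorname{App}(\cD,\cQ)$; here I use that $\cD$ and $\cQ$ are additive subcategories, so they contain $0$. If $m\geq1$, Theorem \ref{thm:power-image} provides the $\eta$-Loewy steps
\[
0\to\sI^{r+1}(A)\to\sI^r(A)\to\sZ(C_r(A))\to0,\qquad 0\leq r<m,
\]
whose factors are the objects $\sZ(C_r(A))$ with $C_r(A)=\sC(\sI^r(A))\in\cB$. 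By hypothesis (2), applied to $B=C_r(A)$, each factor belongs to $\operatorname{App}(\cD,\cQ)$. This means we can choose sequences $0\to\sZ(C_r(A))\to D_r^{\circ}\to Q_r^{\circ}\to0$ with $D_r^{\circ}\in\cD$ and $Q_r^{\circ}\in\cQ$.

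The standing hypotheses of Theorem \ref{thm:filtration-reduction} (extension-closedness of $\cD,\cQ$ and $\Ext^2_{\cA}(\cQ,\cD)=0$) are exactly those assumed in the corollary. That theorem therefore gives $A\in\operatorname{App}(\cD,\cQ)$. Since $A$ was arbitrary, $\operatorname{App}(\cD,\cQ)=\cA$.

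There is no real obstacle: all the gluing work is already done in Theorem \ref{thm:filtration-reduction}. The only points to check are that the Loewy factors really are of the form $\sZ(B)$ for some $B\in\cB$, which holds because $C_r(A)$ is an object of $\cB$, and that the degenerate case $A=0$ is handled separately, as above.
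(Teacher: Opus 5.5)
Your proof is correct and follows the paper's argument. The implication (1)$\Rightarrow$(2) is immediate. For (2)$\Rightarrow$(1), you apply Theorem \ref{thm:filtration-reduction} to the finite $\eta$-Loewy filtration from Theorem \ref{thm:power-image}, whose factors are $\sZ(C_r(A))$ with $C_r(A)\in\cB$. Your separate treatment of $A=0$ is harmless but redundant, since Theorem \ref{thm:filtration-reduction} already covers that case by taking $D_A=Q_A=0$.
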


\begin{proof}
The implication \((1)\Rightarrow(2)\) is immediate, since \(\sZ(B)\) is an object of \(\cA\) for every \(B\in\cB\). Conversely, let \(A\in\cA\). Since \(\cA\) is image-nilpotent, \(A\) has finite \(\eta\)-Loewy length, take \(\ellength(A)=m\). By Theorem \ref{thm:power-image}, there are short exact sequences
\[
0\to\sI^{r+1}(A)\to\sI^r(A)\to\sZ(C_r(A))\to0,\qquad0\leq r<m,
\]
with \(\sI^m(A)=0\). For every \(r\), the object \(C_r(A)\) belongs to \(\cB\); hence condition (2) gives
\[
\sZ(C_r(A))\in\operatorname{App}(\cD,\cQ).
\]
Thus every factor of the finite \(\eta\)-Loewy filtration of \(A\) belongs to \(\operatorname{App}(\cD,\cQ)\). Theorem \ref{thm:filtration-reduction} therefore yields
\[
A\in\operatorname{App}(\cD,\cQ).
\]
Therefore, \(\operatorname{App}(\cD,\cQ)=\cA\).
\end{proof}
\section{Higher cotorsion pairs and completeness}
We now combine the filtration theory with higher Ext-comparison.

\begin{lemma}\label{lem:Hu-Ext}
Let $X\in\cB$. Then $L_i\sF(X)\cong\sU L_i\sT(X)$ for every $i\geq1$. If $L_i\sF(X)=0$ for $1\leq i\leq m$, then for every $(Y,g)\in\cA$ there are natural isomorphisms
\[
\Ext^i_{\cA}(\sT(X),(Y,g))\cong\Ext^i_{\cB}(X,Y),
\qquad1\leq i\leq m,
\]
where  $\{L_i(-)\}_{i\in\mathbb{Z}}$ are the left derived functors.
\end{lemma}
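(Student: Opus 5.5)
Both parts rest on the adjunction $\sT\dashv\sU$ together with a derived-functor comparison along a projective resolution of $X$ in $\cB$. First I will record the standard facts. $\sT$ is left adjoint to $\sU$: a morphism $\sT(X)\to(Y,g)$ is determined by its first component $X\to Y$, the second component being forced to equal $g\sF(\alpha)$; the compatibility $f\sF(f)=f\eta$ makes this an $\cA$-morphism. Since $\sU$ is exact by Lemma \ref{lem:1}, $\sT$ preserves projectives. It is also right exact, because it is a left adjoint, and this is visible directly: $\sU\sT=1\oplus\sF$ is right exact, and $\sU$ reflects exactness. Finally, $\cA$ has enough projectives, e.g.\ $\sT(P)\twoheadrightarrow A$ via the counit, so $L_i\sT$ is defined.

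For the first claim, take a projective resolution $P_\bullet\to X$ in $\cB$. Then $\sT(P_\bullet)$ is a complex of projectives in $\cA$, and $L_i\sT(X)=H_i(\sT(P_\bullet))$. Since $\sU$ is exact, it commutes with homology, so
\[
\sU L_i\sT(X)\cong H_i(P_\bullet\oplus\sF(P_\bullet))\cong H_i(P_\bullet)\oplus L_i\sF(X).
\]
For $i\geq1$ this is $L_i\sF(X)$, which gives the first claim.

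For the Ext isomorphisms, assume $L_i\sF(X)=0$ for $1\leq i\leq m$. By the first part $\sU L_i\sT(X)=0$, so $L_i\sT(X)=0$ since $\sU$ detects zero objects. Hence $\sT(P_\bullet)$ is exact in degrees $1,\dots,m$ and has $H_0=\sT(X)$; that is, it is a projective resolution of $\sT(X)$ up to degree $m$. I will truncate it honestly: let $K=\Ker(\sT(P_m)\to\sT(P_{m-1}))$ and resolve $K$ projectively. Then $\Ext^i_\cA(\sT(X),(Y,g))$ for $i\leq m$ is computed by $\Hom_\cA(\sT(P_\bullet),(Y,g))$ in degrees $\leq m$. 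In degree $m$ this uses that the cycles of $\Hom(\sT(P_m),-)$ are computed from maps out of $\Coker(\sT(P_{m+1})\to\sT(P_m))$, so I need care at the top degree. An alternative is dimension shifting: induct on $m$ using $0\to\Omega X\to P_0\to X\to0$, whose image under $\sT$ stays exact because $L_1\sF(X)=0$, and then apply the five lemma. By adjunction, $\Hom_\cA(\sT(P_\bullet),(Y,g))\cong\Hom_\cB(P_\bullet,Y)$ naturally, and the cohomology of the right-hand side is $\Ext^i_\cB(X,Y)$.

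The main obstacle is exactly this top-degree bookkeeping, and I will handle it by dimension shifting. For $i=1$: apply $\sT$ to $0\to\Omega X\to P_0\to X\to0$; the result is exact since $L_1\sF(X)=0$ and $\sU$ reflects exactness, and $\sT(P_0)$ is projective. The long exact sequences for $\Hom_\cA(-,(Y,g))$ and $\Hom_\cB(-,Y)$, identified via the adjunction, then give $\Ext^1_\cA(\sT X,(Y,g))\cong\Coker(\Hom(P_0,Y)\to\Hom(\Omega X,Y))\cong\Ext^1_\cB(X,Y)$. For $i\geq2$, use $\Ext^i(\sT X,-)\cong\Ext^{i-1}(\sT\Omega X,-)$ and $L_j\sF(\Omega X)\cong L_{j+1}\sF(X)=0$ for $j\leq m-1$, and apply induction. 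Naturality in $(Y,g)$ follows from naturality of the adjunction isomorphism and of the connecting maps.
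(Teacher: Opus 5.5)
Your argument is correct. The paper does not prove this lemma itself; it refers to \cite[Lemma 3.1]{Hu}. So there is no in-paper route to compare against, and your proposal amounts to a self-contained proof of the cited result.

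It rests on the right ingredients. First, the adjunction $\sT\dashv\sU$, which you verify correctly: the second component of a morphism $\sT(X)\to(Y,g)$ is forced to be $g\sF(\alpha)$, and the remaining condition follows from naturality of $\eta$ together with $g\sF(g)=g\eta_Y$. Second, exactness of $\sU$. Third, the consequence that $\sT$ preserves projectives. The first assertion is then just $\sU\sT(P_\bullet)=P_\bullet\oplus\sF(P_\bullet)$ combined with exactness of $\sU$. Your induction on $m$ along $0\to\Omega X\to P_0\to X\to0$ correctly produces the Ext isomorphisms, natural in $(Y,g)$. It uses $L_j\sF(\Omega X)\cong L_{j+1}\sF(X)$ for $j\geq1$ and the fact that $\sT$ keeps this sequence short exact when $L_1\sF(X)=0$.

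One remark: the top-degree difficulty that pushed you toward dimension shifting is not actually there. Exactness of $\sT(P_\bullet)$ in degree $m$ means that $\sT(P_{m+1})\to\sT(P_m)$ maps onto $\operatorname{Ker}(\sT(P_m)\to\sT(P_{m-1}))$. Hence $\sT(P_\bullet)$ agrees with a genuine projective resolution of $\sT(X)$ in all degrees $\leq m+1$. Meanwhile $\Ext^i_{\cA}(\sT(X),-)$ only involves degrees $i-1$, $i$ and $i+1$ of such a resolution. So for $1\leq i\leq m$ you may compute directly:
\[
\Ext^i_{\cA}(\sT(X),(Y,g))\cong H^i\bigl(\operatorname{Hom}_{\cA}(\sT(P_\bullet),(Y,g))\bigr)\cong H^i\bigl(\operatorname{Hom}_{\cB}(P_\bullet,Y)\bigr)=\Ext^i_{\cB}(X,Y).
\]
Both versions are valid; the direct one is simply shorter.
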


\begin{proof}
See \cite[Lemma 3.1]{Hu}.
\end{proof}

\begin{lemma}\label{lem:induced-orthogonality}
Let $\cX\subseteq\cB$ and assume $L_i\sF(\cX)=0$ for $1\leq i\leq m$. Then
\[
\sT(\cX)^{\perp_{[1,m]}}=\sU^{-1}(\cX^{\perp_{[1,m]}}).
\]
For $m=1$, this is the orthogonality formula \cite[Lemma 3.6]{Hu}.
\end{lemma}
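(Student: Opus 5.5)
The plan is to prove the equality by a direct chain of equivalences, with Lemma \ref{lem:Hu-Ext} as the only real input. Fix an object $A=(Y,g)\in\cA$. By Definition \ref{def:higher-orthogonals}, $A\in\sT(\cX)^{\perp_{[1,m]}}$ means $\Ext^i_{\cA}(\sT(X),(Y,g))=0$ for all $X\in\cX$ and $1\leq i\leq m$. Similarly, $A\in\sU^{-1}(\cX^{\perp_{[1,m]}})$ means $\sU(A)=Y$ satisfies $\Ext^i_{\cB}(X,Y)=0$ for the same $X$ and $i$.

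The key step is to invoke Lemma \ref{lem:Hu-Ext} for each $X\in\cX$. The hypothesis $L_i\sF(X)=0$ for $1\leq i\leq m$ is exactly the standing assumption $L_i\sF(\cX)=0$. The lemma then supplies isomorphisms
\[
\Ext^i_{\cA}(\sT(X),(Y,g))\cong\Ext^i_{\cB}(X,Y),\qquad 1\leq i\leq m,
\]
valid for every structure morphism $g$. Consequently the vanishing of the left side for all $X\in\cX$ and all $1\leq i\leq m$ is equivalent to the vanishing of the right side, and the two membership conditions coincide.

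Both classes are closed under isomorphisms, since Ext is invariant under isomorphism and $\sU$ preserves isomorphisms. The two conditions therefore define the same full subcategory, which gives both inclusions at once. For $m=1$ the statement reduces to \cite[Lemma 3.6]{Hu}.

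There is no serious obstacle; the content lies entirely in Lemma \ref{lem:Hu-Ext}. Two points need care. First, the isomorphism must hold for every $(Y,g)$, not only for objects of the form $\sZ(Y)$, and the lemma does provide this. Second, the vanishing hypothesis is needed only in the range $1\leq i\leq m$, which matches the range of degrees in the orthogonal classes.
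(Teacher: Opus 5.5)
Your proposal is correct and is essentially the paper's proof. The paper also fixes $(Y,g)\in\cA$ and applies Lemma \ref{lem:Hu-Ext} to each $X\in\cX$ in degrees $1\leq i\leq m$, then reads the equality off the definitions of the two orthogonal classes.
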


\begin{proof}
For $(Y,g)\in\cA$, Lemma \ref{lem:Hu-Ext} gives
\[
\Ext^i_{\cA}(\sT(X),(Y,g))\cong\Ext^i_{\cB}(X,Y),
\qquad X\in\cX,
\quad1\leq i\leq m.
\]
The asserted equality follows directly from the definitions.
\end{proof}

\begin{lemma}\label{lem:dimension-shifting}
Let $\cE,\cD$ be classes in an abelian category $\cC$. If $\Ext^i_{\cC}(\cE,\cD)=0, 1\leq i\leq n+1,$ then
$
\Ext^2_{\cC}(\cE^{\vee}_{n-1},\cD)=0.
$
Moreover, if $\Ext^i_{\cC}(\cE,\cD)=0$ for all $i\geq1$, then
\[
\Ext^i_{\cC}(\cE^{\vee}_{n-1},\cD)=0
\]
for all $i\geq1$.
\end{lemma}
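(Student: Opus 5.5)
Let $N\in\cE^{\vee}_{n-1}$ with a coresolution
\[
0\to N\to E^0\to E^1\to\cdots\to E^{n-1}\to0,\qquad E^j\in\cE.
\]
The plan is a standard dimension shift along this coresolution, applying $\Ext_{\cC}(-,D)$ for a fixed $D\in\cD$. Split it into short exact sequences
\[
0\to K^j\to E^j\to K^{j+1}\to0,\qquad 0\le j\le n-2,
\]
where $K^0=N$ and $K^{n-1}=E^{n-1}$. If $n=1$, then $N\cong E^0\in\cE$ and there is nothing to prove. So assume $n\ge2$.

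The key claim, proved by descending induction on $j$ from $n-1$ down to $0$, is
\[
\Ext^i_{\cC}(K^j,D)=0\quad\text{for }1+(n-1-j)\le i\le n+1,
\]
that is, for $n-j\le i\le n+1$.

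For the base case $j=n-1$, we have $K^{n-1}=E^{n-1}\in\cE$, so the vanishing holds for all $1\le i\le n+1$. This is more than the claim requires.

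For the inductive step, use the long exact sequence from $0\to K^j\to E^j\to K^{j+1}\to0$:
\[
\Ext^{i}_{\cC}(E^j,D)\to\Ext^i_{\cC}(K^j,D)\to\Ext^{i+1}_{\cC}(K^{j+1},D).
\]
For $1\le i\le n+1$ the left term vanishes since $E^j\in\cE$. The right term vanishes when $n-j-1\le i+1-1$, i.e. $i\ge n-j$, and $i+1\le n+1$. This gives $\Ext^i_{\cC}(K^j,D)=0$ for $n-j\le i\le n$.

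Note the top degree drops by one at each step. So it is cleaner to track only the lower bound: the induction really yields $\Ext^i_{\cC}(K^j,D)=0$ for $n-j\le i\le n+1-(n-1-j)=j+2$. At $j=0$ this is the range $n\le i\le 2$. That range is empty unless $n\le2$, so this bookkeeping fails in general. This indexing is the main obstacle, and I would redo it with the right shift.

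The correct way is to shift the class $\cE$ down rather than shift degrees up. The connecting maps give
\[
\Ext^i_{\cC}(K^j,D)\hookleftarrow\Ext^{i-1}\text{-type terms of } K^{j+1}\text{ in degree } i+1,
\]
so for fixed target degree $2$ at $K^0$ we need $\Ext^{3}(K^1,D)$, then $\Ext^{4}(K^2,D)$, up to $\Ext^{n+1}(K^{n-1},D)=\Ext^{n+1}(E^{n-1},D)=0$. At each stage the $E^j$ term needed is $\Ext^{2+j}(E^j,D)$, which is zero because $2+j\le n+1$. Hence the chain of exact segments
\[
0=\Ext^{2+j}_{\cC}(E^j,D)\to\Ext^{2+j}_{\cC}(K^j,D)\to\Ext^{3+j}_{\cC}(K^{j+1},D)
\]
gives injections $\Ext^2(K^0,D)\hookrightarrow\Ext^3(K^1,D)\hookrightarrow\cdots\hookrightarrow\Ext^{n+1}(K^{n-1},D)=0$. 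Therefore $\Ext^2_{\cC}(N,D)=0$. The hypothesis range $1\le i\le n+1$ is exactly what makes this work.

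For the second assertion, assume $\Ext^{\ge1}_{\cC}(\cE,\cD)=0$. Then the same injection chain works starting from any degree $i\ge1$:
\[
\Ext^i(K^0,D)\hookrightarrow\Ext^{i+1}(K^1,D)\hookrightarrow\cdots\hookrightarrow\Ext^{i+n-1}(E^{n-1},D)=0,
\]
since every $\Ext^{i+j}(E^j,D)$ vanishes. Hence $\Ext^i_{\cC}(\cE^{\vee}_{n-1},\cD)=0$ for all $i\ge1$.
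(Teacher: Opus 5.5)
Your final argument, the chain of injections $\Ext^{2+j}_{\cC}(K^j,D)\hookrightarrow\Ext^{3+j}_{\cC}(K^{j+1},D)$ ending at $\Ext^{n+1}_{\cC}(E^{n-1},D)=0$, together with the same chain started in an arbitrary degree for the second assertion, is correct and is essentially the paper's proof; the paper records these maps as isomorphisms, but injectivity is all that is needed. You should delete the abandoned first induction and the garbled $\hookleftarrow$ line, since they are wrong as written and the proof does not use them.
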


\begin{proof} For \(n=1\), one has \(\mathcal E^\vee_0=\mathcal E\), and the
assertion follows directly. Assume that \(n\geq2\).  Let $Q\in\cE^{\vee}_{n-1}$ and choose
\[
0\longrightarrow Q\longrightarrow E^0\longrightarrow E^1\longrightarrow\cdots\longrightarrow E^{n-1}\longrightarrow0,
\quad E^j\in\cE.
\]
Set $K^0=Q$ and, for $1\leq j\leq n-1$, let $K^j$ be the image of $E^{j-1}\to E^j$, thus $K^{n-1}=E^{n-1}$. The short exact sequences
\[
0\longrightarrow K^j\longrightarrow E^j\longrightarrow K^{j+1}\longrightarrow0,
\qquad0\leq j\leq n-2,
\]
and the long exact Ext sequences give isomorphisms
\[
\Ext^{j+2}_{\cC}(K^j,D)\cong\Ext^{j+3}_{\cC}(K^{j+1},D),
\qquad0\leq j\leq n-2,
\]
for every $D\in\cD$. Hence
\[
\Ext^2_{\cC}(Q,D)\cong\Ext^{n+1}_{\cC}(E^{n-1},D)=0.
\]
The all-degree assertion follows by the same iteration beginning in an arbitrary positive degree.
\end{proof}

\begin{lemma}\label{lem:T-coresolutions}
If $L_1\sF(\cX^{\vee}_{n-1})=0$, then
\[
\sT(\cX^{\vee}_{n-1})\subseteq\sT(\cX)^{\vee}_{n-1}.
\]
\end{lemma}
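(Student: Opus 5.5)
Take $Q\in\cX^{\vee}_{n-1}$ and fix a coresolution
\[
0\longrightarrow Q\longrightarrow X^0\longrightarrow X^1\longrightarrow\cdots\longrightarrow X^{n-1}\longrightarrow0,\qquad X^j\in\cX.
\]
The plan is to apply $\sT$ to it and show the resulting sequence
\[
0\to\sT(Q)\to\sT(X^0)\to\cdots\to\sT(X^{n-1})\to0
\]
stays exact in $\cA$. Then $\sT(Q)\in\sT(\cX)^{\vee}_{n-1}$ by definition.

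By Lemma \ref{lem:1}, exactness in $\cA$ is checked on underlying objects. We have $\sU\sT(X)=X\oplus\sF(X)$ naturally, with $\sU\sT(\alpha)=\alpha\oplus\sF(\alpha)$. So the underlying complex is the direct sum of the original coresolution, which is exact, and the complex obtained by applying $\sF$ to it. It therefore suffices to show that $\sF$ carries the coresolution to an exact sequence.

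Split the coresolution into short exact sequences
\[
0\to K^j\to X^j\to K^{j+1}\to0,\qquad 0\le j\le n-2,
\]
with $K^0=Q$ and $K^{n-1}=X^{n-1}$. The key observation is that each $K^j$ lies in $\cX^{\vee}_{n-1}$: the truncated sequence $0\to K^j\to X^j\to\cdots\to X^{n-1}\to0$ has length $n-1-j\le n-1$, and we pad it with zero terms, using $0\in\cX$ since subcategories are additive. (Note $\cX\subseteq\cX^{\vee}_{n-1}$ in the same way.) The hypothesis then gives $L_1\sF(K^{j+1})=0$. Since $\sF$ is right exact, the long exact sequence of left derived functors yields short exact sequences
\[
0\to\sF(K^j)\to\sF(X^j)\to\sF(K^{j+1})\to0.
\]
Splicing these shows that the image of the coresolution under $\sF$ is exact.

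Finally, the short exact sequences obtained after applying $\sT$ are sequences of morphisms in $\cA$, because $\sT$ is a functor. They therefore splice to the required coresolution of $\sT(Q)$ by objects of $\sT(\cX)$. The only point needing care is the main obstacle: the claim that the intermediate syzygies $K^j$ belong to $\cX^{\vee}_{n-1}$, which relies on padding by zero. If one prefers to avoid this, the hypothesis can be read as applying to the cosyzygies. Alternatively, one can note that $L_1\sF(X^{n-1})=0$ is needed anyway and argue by induction on the length of the coresolution.
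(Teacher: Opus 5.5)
Your proof is correct and follows the paper's argument. Like the paper, you split the coresolution into short exact sequences, observe that every cosyzygy $K^{j}$ lies in $\cX^{\vee}_{n-1}$ (padding with zeros is legitimate since $0\in\cX$) so that $L_1\sF(K^{j+1})=0$, and then splice the images under $\sT$. The only difference is how you get exactness of $0\to\sT(K^j)\to\sT(X^j)\to\sT(K^{j+1})\to0$: you read it off directly from $\sU\sT\cong 1_{\cB}\oplus\sF$ and Lemma~\ref{lem:1}, whereas the paper uses $\sU L_1\sT(K^{j+1})\cong L_1\sF(K^{j+1})=0$ via Lemma~\ref{lem:Hu-Ext}. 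Your route is slightly more elementary, and the hedging in your last paragraph is unnecessary.
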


\begin{proof}
Let $M\in\cX^{\vee}_{n-1}$ and choose
\[
0\longrightarrow M\longrightarrow X^0\longrightarrow X^1\longrightarrow\cdots\longrightarrow X^{n-1}\longrightarrow0,
\quad X^j\in\cX.
\]
Put $K^0=M$, $K^n=0$, and decompose this sequence into
\[
0\longrightarrow K^j\longrightarrow X^j\longrightarrow K^{j+1}\longrightarrow0,
\qquad0\leq j\leq n-1.
\]
Note that every $K^j$ belongs to $\cX^{\vee}_{n-1}$, hence $L_1\sF(K^{j+1})=0$, and Lemma \ref{lem:Hu-Ext} gives $\sU L_1\sT(K^{j+1})=0$. Since \(\sU(X,f)=X\), $L_1\sT(K^{j+1})=0$. Applying the right exact functor $\sT$, we obtain a series of short exact sequences
\[
0\longrightarrow\sT(K^j)\longrightarrow\sT(X^j)\longrightarrow\sT(K^{j+1})\longrightarrow0.
\]
Splicing yields a $\sT(\cX)$-coresolution of $\sT(M)$ of length at most $n-1$.
\end{proof}

The following lifting is the basic local construction. For $\eta=0$, it reduces to the object used in the proof of completeness \cite[Theorem 3.13]{Hu}, the lower-right entry $\eta_{X_B}$ is what makes the construction valid for a non-trivial $\eta$-extension.

\begin{lemma}\label{lem:basic-lifting}
Let $(\cX,\cY)$ be a right $n$-cotorsion pair in $\cB$. Assume
\[
L_1\sF(\cX^{\vee}_{n-1})=0,
\qquad
\sF(\cX^{\vee}_{n-1})\subseteq\cY.
\]
For every $B\in\cB$, there is a short exact sequence
\[
0\longrightarrow\sZ(B)\xrightarrow{u_B}E_B\xrightarrow{v_B}\sT(X_B)\longrightarrow0
\]
with
\[
E_B\in\sU^{-1}(\cY),
\qquad
\sT(X_B)\in\sT(\cX)^{\vee}_{n-1}.
\]
\end{lemma}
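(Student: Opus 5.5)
Start from the right $n$-cotorsion pair in $\cB$: for $B\in\cB$ choose a short exact sequence
\[
0\to B\xrightarrow{a}Y_B\xrightarrow{b}X_B\to0,\qquad Y_B\in\cY,\ X_B\in\cX^{\vee}_{n-1}.
\]
We then construct $E_B$ explicitly as a lift of $Y_B\oplus\sF(X_B)$ with a structure morphism modelled on $t_{X_B}$. Set
\[
E_B=\Bigl(Y_B\oplus\sF(X_B),\ e_B\Bigr),\qquad
e_B=\begin{pmatrix}0&0\\ \sF(b)&\eta_{X_B}\end{pmatrix}:\sF(Y_B)\oplus\sF^2(X_B)\to Y_B\oplus\sF(X_B).
\]
The first check is that $E_B\in\cA$, i.e.\ $e_B\sF(e_B)=e_B\eta_{E_B}$. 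Computing the matrix product, the left side has lower row $\bigl(\eta_{X_B}\sF^2(b),\ \eta_{X_B}\sF(\eta_{X_B})\bigr)$. The right side, using naturality of $\eta$ with respect to the inclusion and projection of the direct sum, has lower row $\bigl(\sF(b)\eta_{Y_B},\ \eta_{X_B}\eta_{\sF X_B}\bigr)$. These agree by naturality of $\eta$ applied to $b$ and by associativity. This is exactly where the lower-right entry $\eta_{X_B}$ is needed.

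Next we define $u_B=\binom{a}{0}:B\to Y_B\oplus\sF(X_B)$ and $v_B=\begin{pmatrix}b&0\\0&1\end{pmatrix}:Y_B\oplus\sF(X_B)\to X_B\oplus\sF(X_B)$, and check that both are morphisms in $\cA$. For $u_B$ we need $e_B\sF(u_B)=0$, i.e.\ $\sF(b)\sF(a)=\sF(ba)=0$, which holds. For $v_B$ we need $v_Be_B=t_{X_B}\sF(v_B)$; both sides equal $\begin{pmatrix}0&0\\ \sF(b)&\eta_{X_B}\end{pmatrix}$. The underlying sequence $0\to B\to Y_B\oplus\sF(X_B)\to X_B\oplus\sF(X_B)\to0$ is the direct sum of the chosen sequence with $0\to0\to\sF(X_B)\xrightarrow{1}\sF(X_B)\to0$, so it is exact. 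By Lemma \ref{lem:1} the sequence $0\to\sZ(B)\to E_B\to\sT(X_B)\to0$ is then exact in $\cA$.

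Finally we verify the membership conditions. We have $\sU(E_B)=Y_B\oplus\sF(X_B)$ with $Y_B\in\cY$ and, by hypothesis, $\sF(X_B)\in\sF(\cX^{\vee}_{n-1})\subseteq\cY$. The class $\cY$ is closed under finite direct sums, as remarked after Proposition \ref{prop:HMP}, so $E_B\in\sU^{-1}(\cY)$. Since $X_B\in\cX^{\vee}_{n-1}$ and $L_1\sF(\cX^{\vee}_{n-1})=0$, Lemma \ref{lem:T-coresolutions} gives $\sT(X_B)\in\sT(\cX)^{\vee}_{n-1}$.

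The main obstacle is the structure-morphism verification for $E_B$: the matrix computation with the identifications $\sF(Y\oplus \sF X)\cong \sF Y\oplus\sF^2X$ and the naturality of $\eta$ on the off-diagonal entry. Everything else is formal.
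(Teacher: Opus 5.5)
Your proposal is correct and coincides with the paper's proof. You use the same object $E_B=(Y_B\oplus\sF(X_B),\beta_B)$ with lower row $(\sF(\pi),\eta_{X_B})$ and the same morphisms $u_B,v_B$, verified by the same matrix computations: naturality of $\eta$ for the off-diagonal entry and associativity for the diagonal one. Exactness then comes from Lemma \ref{lem:1}, and membership from closure of $\cY$ under finite direct sums together with Lemma \ref{lem:T-coresolutions}.
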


\begin{proof}
By Definition \ref{def:n-cotorsion-pairs}, there is an exact sequence
\[
0\longrightarrow B\xrightarrow{i}Y_B\xrightarrow{\pi}X_B\longrightarrow0,
\quad Y_B\in\cY,
\quad X_B\in\cX^{\vee}_{n-1}.
\]
Consider
\[
E_B=(Y_B\oplus\sF(X_B),\beta_B),
\qquad
\beta_B=\begin{pmatrix}0&0\\ \sF(\pi)&\eta_{X_B}\end{pmatrix}.
\]
We first verify $E_B$ is an object of $\cA$. That is,
\[
\beta_B\sF(\beta_B)=\beta_B\eta_{Y_B\oplus\sF(X_B)}.
\]

Under the canonical decompositions
\[
\sF(Y_B\oplus\sF(X_B))
\cong\sF(Y_B)\oplus\sF^2(X_B)
\]
and
\[
\sF^2(Y_B\oplus\sF(X_B))
\cong\sF^2(Y_B)\oplus\sF^3(X_B),
\]
we have
\[
\beta_B=
\begin{pmatrix}
0&0\\
\sF(\pi)&\eta_{X_B}
\end{pmatrix},
\qquad
\sF(\beta_B)=
\begin{pmatrix}
0&0\\
\sF^2(\pi)&\sF(\eta_{X_B})
\end{pmatrix}.
\]
Consequently,
\[
\beta_B\sF(\beta_B)=
\begin{pmatrix}
0&0\\
\eta_{X_B}\sF^2(\pi)&
\eta_{X_B}\sF(\eta_{X_B})
\end{pmatrix}.
\]
Since \(\sF\) and \(\eta\) are additive, the morphism
\(\eta_{Y_B\oplus\sF(X_B)}\) is represented by
\[
\eta_{Y_B\oplus\sF(X_B)}=
\begin{pmatrix}
\eta_{Y_B}&0\\
0&\eta_{\sF(X_B)}
\end{pmatrix}.
\]
Hence
\[
\beta_B\eta_{Y_B\oplus\sF(X_B)}=
\begin{pmatrix}
0&0\\
\sF(\pi)\eta_{Y_B}&
\eta_{X_B}\eta_{\sF(X_B)}
\end{pmatrix}.
\]
By the naturality and associativity of $\eta$, the diagrams
\[
\begin{array}{cc}
\begin{array}{ccc}
\sF^2(Y_B)&\xrightarrow{\eta_{Y_B}}&\sF(Y_B)\\
{\scriptstyle\sF^2(\pi)}\downarrow&&{\scriptstyle\sF(\pi)}\downarrow\\
\sF^2(X_B)&\xrightarrow{\eta_{X_B}}&\sF(X_B)
\end{array}
&
\begin{array}{ccc}
\sF^3(X_B)&\xrightarrow{\sF(\eta_{X_B})}&\sF^2(X_B)\\
{\scriptstyle\eta_{\sF(X_B)}}\downarrow&&{\scriptstyle\eta_{X_B}}\downarrow\\
\sF^2(X_B)&\xrightarrow{\eta_{X_B}}&\sF(X_B)
\end{array}
\end{array}
\]
commute, and hence
$
\beta_B\sF(\beta_B)
=\beta_B\eta_{Y_B\oplus\sF(X_B)}.
$
Therefore \(E_B=(Y_B\oplus\sF(X_B),\beta_B)\) is an object of
\(\cA=\cB\ltimes_\eta\sF\).

Next, define
\[
{u}_B=\binom{i}{0}:B\to Y_B\oplus\sF(X_B)
\]
and
\[
{v}_B=
\begin{pmatrix}
\pi&0\\
0&1_{\sF(X_B)}
\end{pmatrix}:
Y_B\oplus\sF(X_B)\to X_B\oplus\sF(X_B)
\]
be morphisms in \(\cB\). We show that they satisfy the compatibility relations required to define morphisms
\[
u_B:\sZ(B)\to E_B,\qquad
v_B:E_B\to\sT(X_B)
\]
in \(\cA\). That is,
$
{u}_B0=\beta_B\sF({u}_B),
$
and
$
{v}_B\beta_B=t_{X_B}\sF({v}_B).
$

Since
\[
\sF({u}_B)=\binom{\sF(i)}{0},
\]
we have
\[
\beta_B\sF({u}_B)
=
\begin{pmatrix}
0&0\\
\sF(\pi)&\eta_{X_B}
\end{pmatrix}
\binom{\sF(i)}{0}
=
\binom{0}{\sF(\pi i)}
=0.
\]
Thus \(u_B:\sZ(B)\to E_B\) is a morphism in \(\cA\).

Recall that
\[
\sT(X_B)=\bigl(X_B\oplus\sF(X_B),t_{X_B}\bigr),
\]
where
\[
t_{X_B}=
\begin{pmatrix}
0&0\\
1_{\sF(X_B)}&\eta_{X_B}
\end{pmatrix}:
\sF(X_B)\oplus\sF^2(X_B)\to X_B\oplus\sF(X_B).
\]
Under the canonical decompositions,
\[
\sF({v}_B)=
\begin{pmatrix}
\sF(\pi)&0\\
0&1_{\sF^2(X_B)}
\end{pmatrix}.
\]
Therefore
\[
{v}_B\beta_B
=
\begin{pmatrix}
\pi&0\\
0&1_{\sF(X_B)}
\end{pmatrix}
\begin{pmatrix}
0&0\\
\sF(\pi)&\eta_{X_B}
\end{pmatrix}
=
\begin{pmatrix}
0&0\\
\sF(\pi)&\eta_{X_B}
\end{pmatrix},
\]
while
\[
t_{X_B}\sF({v}_B)
=
\begin{pmatrix}
0&0\\
1_{\sF(X_B)}&\eta_{X_B}
\end{pmatrix}
\begin{pmatrix}
\sF(\pi)&0\\
0&1_{\sF^2(X_B)}
\end{pmatrix}
=
\begin{pmatrix}
0&0\\
\sF(\pi)&\eta_{X_B}
\end{pmatrix}.
\]
Hence \(v_B:E_B\to\sT(X_B)\) is a morphism in \(\cA\).

It remains to prove exactness. 
The two exact sequences
\[
0\to B\xrightarrow{i}Y_B\xrightarrow{\pi}X_B\to0
\]
and
\[
0\to0\to\sF(X_B)
\xrightarrow{1_{\sF(X_B)}}\sF(X_B)\to0,
\]

 give the  exact sequence
\[
0\to B\xrightarrow{\binom{i}{0}}
Y_B\oplus\sF(X_B)
\xrightarrow{\left(\begin{smallmatrix}
		\pi&0\\
		0&1_{\sF(X_B)}
	\end{smallmatrix}\right)}
X_B\oplus\sF(X_B)\to0.
\]
in \(\cB\). By Lemma \ref{lem:1},
\[
0\to\sZ(B)\xrightarrow{u_B}E_B
\xrightarrow{v_B}\sT(X_B)\to0
\]
is a short exact sequence in \(\cA\).

By Lemma \ref{lem:T-coresolutions}, $\sT(X_B)\in\sT(\cX)^{\vee}_{n-1}$. Moreover,
\[
\sU(E_B)=Y_B\oplus\sF(X_B)\in\cY,
\]
because $X_B\in\cX^{\vee}_{n-1}$, $\sF(\cX^{\vee}_{n-1})\subseteq\cY$, and $\cY=\cX^{\perp_{[1,n]}}$ is closed under finite direct sums by Proposition \ref{prop:HMP}. Thus $E_B\in\sU^{-1}(\cY)$. This proves the lemma.
\end{proof}

\begin{remark}\label{rem:basic-lifting}
The construction in Lemma \ref{lem:basic-lifting} is forced by the induction functor. Starting from
\[
0\to B\xrightarrow{i}Y_B\xrightarrow{\pi}X_B\to0,
\]
we choose \(\sT(X_B)\) as the right-hand term because Lemma \ref{lem:T-coresolutions} transfers finite \(\cX\)-coresolutions to finite \(\sT(\cX)\)-coresolutions. When \(\eta=0\), one has
\[
t_{X_B}=\begin{pmatrix}0&0\\1&0\end{pmatrix},
\qquad
\beta_B=\begin{pmatrix}0&0\\ \sF(\pi)&0\end{pmatrix}.
\]
Thus the sequence in Lemma \ref{lem:basic-lifting} becomes
\[
0\to\sZ(B)\xrightarrow{\binom{i}{0}}
\left(Y_B\oplus\sF(X_B),
\begin{pmatrix}0&0\\ \sF(\pi)&0\end{pmatrix}\right)
\xrightarrow{\left(\begin{smallmatrix}\pi&0\\0&1\end{smallmatrix}\right)}
\sT(X_B)\to0,
\]
which is exactly the sequence constructed in the proof of
\cite[Theorem 3.13]{Hu}. Hence the exact sequence in Lemma \ref{lem:basic-lifting} extends this construction from right trivial extensions to general \(\eta\)-extensions.
\end{remark}

\begin{theorem}\label{thm:local-global-right-n}
Let \(\cA=\cB\ltimes_\eta\sF\) be image-nilpotent, let \(n\geq1\), and let \(\mathcal P,\mathcal R\) be classes of objects in \(\cA\). Assume that
\begin{enumerate}
\item \(\mathcal R\) is closed under extensions and direct summands;
\item \(\mathcal P^\vee_{n-1}\) is closed under extensions;
\item
\[
\Ext^i_{\cA}(\mathcal P,\mathcal R)=0,\qquad1\leq i\leq n;
\]
\item
\[
\Ext^2_{\cA}(\mathcal P^\vee_{n-1},\mathcal R)=0.
\]
\end{enumerate}
Then the following conditions are equivalent:
\begin{enumerate}
\item[(a)] \((\mathcal P,\mathcal R)\) is a right \(n\)-cotorsion pair in \(\cA\);
\item[(b)] for every \(B\in\cB\), there is a short exact sequence
\[
0\to\sZ(B)\to R_B\to P_B\to0
\]
with \(R_B\in\mathcal R\) and \(P_B\in\mathcal P^\vee_{n-1}\).
\end{enumerate}

If \(\mathcal R\) is coresolving, then condition {\rm(4)} follows from condition {\rm(3)}.
\end{theorem}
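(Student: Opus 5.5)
The implication (a)$\Rightarrow$(b) is immediate. If $(\mathcal P,\mathcal R)$ is a right $n$-cotorsion pair in $\cA$, condition (3) of Definition \ref{def:n-cotorsion-pairs} applies to every object of $\cA$, in particular to $\sZ(B)$ for each $B\in\cB$. This yields the required sequence with $R_B\in\mathcal R$ and $P_B\in\mathcal P^\vee_{n-1}$.

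For (b)$\Rightarrow$(a), I check the three conditions of a right $n$-cotorsion pair. Closure of $\mathcal R$ under direct summands is hypothesis (1). The vanishing $\Ext^i_{\cA}(\mathcal P,\mathcal R)=0$ for $1\leq i\leq n$ is hypothesis (3). For the approximation condition I apply Corollary \ref{cor:reduction-Z} with
\[
\cD=\mathcal R,\qquad \cQ=\mathcal P^\vee_{n-1}.
\]
Its hypotheses hold: $\cD$ is extension-closed by (1), $\cQ$ is extension-closed by (2), and $\Ext^2_{\cA}(\cQ,\cD)=0$ is exactly (4). Condition (b) says $\sZ(B)\in\operatorname{App}(\mathcal R,\mathcal P^\vee_{n-1})$ for every $B\in\cB$. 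Since $\cA$ is image-nilpotent, the corollary gives $\operatorname{App}(\mathcal R,\mathcal P^\vee_{n-1})=\cA$. Hence every $A\in\cA$ admits $0\to A\to R\to P\to0$ with $R\in\mathcal R$ and $P\in\mathcal P^\vee_{n-1}$, which is the approximation condition. All of the gluing work along the $\eta$-Loewy filtration is already contained in Theorem \ref{thm:filtration-reduction}, so this direction is bookkeeping.

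The main point needing care is the final remark: if $\mathcal R$ is coresolving, then (3) implies (4). Let $Q\in\mathcal P^\vee_{n-1}$ with coresolution $0\to Q\to P^0\to\cdots\to P^{n-1}\to0$, $P^j\in\mathcal P$, and syzygies $K^j$ as in Lemma \ref{lem:dimension-shifting}. Lemma \ref{lem:dimension-shifting} itself would need vanishing up to degree $n+1$, which (3) does not give, so I shift the second variable instead. For $R\in\mathcal R$, take a short exact sequence $0\to R\to I\to R'\to0$ with $I$ injective. Then $R'\in\mathcal R$ because $\mathcal R$ is coresolving, and $\Ext^2_{\cA}(Q,R)\cong\Ext^1_{\cA}(Q,R')$.

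It then suffices to show $\Ext^1_{\cA}(\mathcal P^\vee_{n-1},\mathcal R)=0$. This is Proposition \ref{prop:HMP}(2), applied in $\cA$ with $(\mathcal P,\mathcal R)$ and hypothesis (3). That proposition is stated for an arbitrary abelian category with a pair of classes, so it applies directly.

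If one prefers not to invoke it, the same vanishing follows from dimension shifting along the coresolution, using only degrees $\leq n$ on the $\mathcal P$ side and using that $\mathcal R$ is closed under cosyzygies to raise degrees in the $\mathcal R$ variable. The one step to double-check there is that the indices line up with $1\leq i\leq n$: each step trades one degree between the two variables, and the coresolution has length $n-1$.
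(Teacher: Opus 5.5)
Your proposal is correct and follows the paper's proof. You get (b)$\Rightarrow$(a) from the filtration reduction: Corollary \ref{cor:reduction-Z} is just the packaged form of Theorem \ref{thm:filtration-reduction}, which the paper applies object by object. You get the coresolving remark, as the paper does, from Proposition \ref{prop:HMP}(2) followed by a dimension shift along an injective cosyzygy $0\to R\to I\to R'\to0$ with $R'\in\mathcal R$.
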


\begin{proof}
The implication \({\rm(a)}\Rightarrow{\rm(b)}\) follows immediately from the definition of a right \(n\)-cotorsion pair, applied to the objects \(\sZ(B)\).

Conversely,  let \(A\in\cA\). Since \(\cA\) is image-nilpotent, \(A\) has a finite \(\eta\)-Loewy filtration
\[
0=\sI^m(A)\subseteq\sI^{m-1}(A)\subseteq\cdots\subseteq\sI^0(A)=A
\]
with short exact sequences
\[
0\to\sI^{r+1}(A)\to\sI^r(A)\to\sZ(C_r(A))\to0,
\qquad0\leq r<m.
\]
By (b), every factor \(\sZ(C_r(A))\) belongs to
$
\operatorname{App}\bigl(\mathcal R,\mathcal P^\vee_{n-1}\bigr).
$
Conditions (1), (2) and (4) allow us to apply Theorem
\ref{thm:filtration-reduction}. Hence there is a short exact sequence
\[
0\to A\to R_A\to P_A\to0
\]
with \(R_A\in\mathcal R\) and \(P_A\in\mathcal P^\vee_{n-1}\). Hence, Definition \ref{def:n-cotorsion-pairs} shows that
$
(\mathcal P,\mathcal R)
$
is a right \(n\)-cotorsion pair in \(\cA\) .

Finally, if \(\mathcal R\) is coresolving, then  condition {\rm(3)} and Proposition
\ref{prop:HMP}(2) give
$
\Ext^1_{\cA}(\mathcal P^\vee_{n-1},\mathcal R)=0.
$
Let \(Q\in\mathcal P^\vee_{n-1}\) and \(R\in\mathcal R\). Choose a
short exact sequence
\[
0\to R\to I\to R_1\to0
\]
with \(I\) injective. Since \(\mathcal R\) is coresolving,
\(R_1\in\mathcal R\). Dimension shifting yields
\[
\Ext^2_{\cA}(Q,R)\cong\Ext^1_{\cA}(Q,R_1)=0.
\]
Hence
$
\Ext^2_{\cA}(\mathcal P^\vee_{n-1},\mathcal R)=0,
$
so condition {\rm(4)} follows from condition {\rm(3)}.
\end{proof}

\begin{corollary}\label{cor:local-global-left-n}
Let \(\cA=\cB\ltimes_\eta\sF\) be image-nilpotent, and let
\(\mathcal P,\mathcal R\subseteq\cA\). Assume that
\begin{enumerate}
\item \(\mathcal P\) is closed under extensions and direct summands;
\item \(\mathcal R^\wedge_{n-1}\) is closed under extensions;
\item
\[
\Ext^i_{\cA}(\mathcal P,\mathcal R)=0,\qquad1\leq i\leq n;
\]
\item
\[
\Ext^2_{\cA}(\mathcal P,\mathcal R^\wedge_{n-1})=0.
\]
\end{enumerate}
Then \((\mathcal P,\mathcal R)\) is a left \(n\)-cotorsion pair if and only if, for every \(B\in\cB\), there is a short exact sequence
\[
0\to R_B\to P_B\to\sZ(B)\to0
\]
with \(P_B\in\mathcal P\) and \(R_B\in\mathcal R^\wedge_{n-1}\).
\end{corollary}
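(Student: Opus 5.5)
The plan is to dualize the proof of Theorem \ref{thm:local-global-right-n}. The only new ingredient is a dual filtration-reduction statement for left approximations. For classes \(\cD,\cQ\subseteq\cA\), let \(\operatorname{App}'(\cQ,\cD)\) denote the class of objects \(A\) admitting a short exact sequence \(0\to Q\to D\to A\to0\) with \(D\in\cD\) and \(Q\in\cQ\).

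\textbf{Step 1: the dual horseshoe lemma.} I would first establish the dual of Lemma \ref{lem:relative-horseshoe}: if \(\cD,\cQ\) are extension-closed and \(\Ext^2_{\cA}(\cD,\cQ)=0\), then \(\operatorname{App}'(\cQ,\cD)\) is closed under extensions. The argument is the standard horseshoe for special precovers. Given \(0\to A'\to A\to A''\to0\) with precovers \(0\to Q'\to D'\to A'\to0\) and \(0\to Q''\to D''\to A''\to0\), form the pullback \(W\) of \(A\to A''\leftarrow D''\). This gives \(0\to A'\to W\to D''\to0\) and \(0\to Q''\to W\to A\to0\). Pull back once more along \(D'\to A'\) to obtain the extension class in \(\Ext^1(D'',A')\). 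The aim is to lift it to \(\Ext^1(D'',D')\), i.e. to build \(0\to D'\to D\to D''\to0\) mapping onto \(W\). The obstruction to this lift lies in \(\Ext^2(D'',Q')\), which vanishes by hypothesis. The \(3\times3\) lemma then gives \(0\to Q\to D\to A\to0\), where \(Q\) is an extension of \(Q''\) by \(Q'\). Hence \(D\in\cD\) and \(Q\in\cQ\) by extension-closedness. I expect this to be the only non-formal step, and I will simply cite the dual of \cite[Lemma 2.8]{LongZhang}.

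\textbf{Step 2: the dual of Theorem \ref{thm:filtration-reduction}.} By Theorem \ref{thm:power-image}, \(A\) has finitely many steps \(0\to\sI^{r+1}(A)\to\sI^r(A)\to\sZ(C_r(A))\to0\). Since \(\sI^{m-1}(A)\cong\sZ(C_{m-1}(A))\), a descending induction on \(r\) using Step 1 shows that every object whose Loewy factors lie in \(\operatorname{App}'(\cQ,\cD)\) lies there itself. This mirrors Corollary \ref{cor:reduction-Z} exactly.

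\textbf{Step 3: the main equivalence.} Set \(\cD=\mathcal P\) and \(\cQ=\mathcal R^\wedge_{n-1}\). Hypotheses (1) and (2) give extension-closedness, and (4) gives \(\Ext^2_{\cA}(\mathcal P,\mathcal R^\wedge_{n-1})=0\).

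For the forward direction, a left \(n\)-cotorsion pair provides the required sequence for every object, in particular for each \(\sZ(B)\).

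For the converse, let \(A\in\cA\). Since \(\cA\) is image-nilpotent, \(A\) has finite \(\eta\)-Loewy length. Each factor \(\sZ(C_r(A))\) lies in \(\operatorname{App}'(\mathcal R^\wedge_{n-1},\mathcal P)\) by hypothesis, so Step 2 gives \(0\to R_A\to P_A\to A\to0\) with \(P_A\in\mathcal P\) and \(R_A\in\mathcal R^\wedge_{n-1}\). Together with (1), which gives closure of \(\mathcal P\) under direct summands, and (3), which gives the Ext-vanishing, all three conditions of Definition \ref{def:n-cotorsion-pairs} for a left \(n\)-cotorsion pair hold.
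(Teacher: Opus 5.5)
Your proposal is correct and is exactly the dualization the paper intends: the paper omits the proof as dual to Theorem~\ref{thm:local-global-right-n}, and your Step~1 correctly places the obstruction in $\Ext^2_{\cA}(D'',Q')$, which is why hypothesis (4) reads $\Ext^2_{\cA}(\mathcal P,\mathcal R^\wedge_{n-1})=0$. One wording fix: the class of $0\to A'\to W\to D''\to0$ already lies in $\Ext^1_{\cA}(D'',A')$, so no second pullback is needed; you lift it along $D'\to A'$ using the surjection $\Ext^1_{\cA}(D'',D')\to\Ext^1_{\cA}(D'',A')$.
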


\begin{proof}
The proof is dual to that of Theorem
\ref{thm:local-global-right-n} and is omitted.
\end{proof}

To simplify the formulation of the main results, we collect the homological vanishing, compatibility and extension-closure conditions into the following definition.

\begin{definition}\label{def:F-compatible}
A right $n$-cotorsion pair $(\cX,\cY)$ in $\cB$ is $\sF$-compatible if
\begin{enumerate}
\item $L_i\sF(\cX)=0$ for $1\leq i\leq n+1$;
\item $L_1\sF(\cX^{\vee}_{n-1})=0$ and $\sF(\cX^{\vee}_{n-1})\subseteq\cY$;
\item $\Ext^{n+1}_{\cB}(\cX,\cY)=0$;
\item $\sT(\cX)^{\vee}_{n-1}$ is extension-closed in $\cA$.
\end{enumerate}
It is strongly $\sF$-compatible if it is hereditary and $L_i\sF(\cX)=0$ for all $i\geq1$.
\end{definition}

\begin{remark}\label{rem:extra-degree}
Although Definition \ref{def:F-compatible} may appear somewhat cumbersome at first sight, its assumptions are in fact quite natural, as the examples in Section~\ref{sec:examples} will show.
\end{remark}

\begin{theorem}\label{thm:lift-n-cotorsion}
Let \(\cA=\cB\ltimes_\eta\sF\) be image-nilpotent, and let
\((\cX,\cY)\) be an \(\sF\)-compatible right \(n\)-cotorsion pair in
\(\cB\). Then
$
\bigl(\sT(\cX),\sU^{-1}(\cY)\bigr)
$
is a right \(n\)-cotorsion pair in \(\cA\), and
$
\sU^{-1}(\cY)=\sT(\cX)^{\perp_{[1,n]}}.
$

More precisely, let \(A\in\cA\) have \(\eta\)-Loewy length \(m\).
For every \(0\leq r<m\), choose a short exact sequence
\[
0\to C_r(A)\to Y_r\to X_r\to0,
\qquad Y_r\in\cY,\quad X_r\in\cX^\vee_{n-1},
\]
and let
$
0\to\sZ(C_r(A))\to E_r\to\sT(X_r)\to0
$
be its lift given by Lemma \ref{lem:basic-lifting}. Then there is a
short exact sequence
$
0\to A\to D_A\to Q_A\to0
$
with
$
D_A\in\sU^{-1}(\cY), Q_A\in\sT(\cX)^\vee_{n-1}.
$
Moreover, \(D_A\) and \(Q_A\) admit filtrations
\[
0=D_m\subseteq D_{m-1}\subseteq\cdots\subseteq D_0=D_A,
\]
\[
0=Q_m\subseteq Q_{m-1}\subseteq\cdots\subseteq Q_0=Q_A
\]
such that
$
D_r/D_{r+1}\cong E_r, Q_r/Q_{r+1}\cong\sT(X_r)$ for \(0\leq r<m\).
\end{theorem}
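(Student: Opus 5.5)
We apply Theorem \ref{thm:local-global-right-n} with \(\mathcal P=\sT(\cX)\) and \(\mathcal R=\sU^{-1}(\cY)\). Afterwards we read off the explicit filtrations from Theorem \ref{thm:filtration-reduction}.

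\textbf{Orthogonality.} Condition (1) of Definition \ref{def:F-compatible} gives \(L_i\sF(\cX)=0\) for \(1\le i\le n+1\). Hence Lemma \ref{lem:Hu-Ext} yields
\[
\Ext^i_{\cA}(\sT(X),(Y,g))\cong\Ext^i_{\cB}(X,Y),\qquad 1\le i\le n+1 .
\]
Lemma \ref{lem:induced-orthogonality} with \(m=n\), together with \(\cY=\cX^{\perp_{[1,n]}}\) from Proposition \ref{prop:HMP}(1), gives
\[
\sU^{-1}(\cY)=\sT(\cX)^{\perp_{[1,n]}} .
\]
This is the asserted equality and hypothesis (3) of Theorem \ref{thm:local-global-right-n}. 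The same isomorphism in degree \(n+1\), combined with condition (3) of Definition \ref{def:F-compatible}, gives \(\Ext^{i}_{\cA}(\sT(\cX),\sU^{-1}(\cY))=0\) for \(1\le i\le n+1\).

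\textbf{Remaining hypotheses.} Lemma \ref{lem:dimension-shifting}, applied with \(\cE=\sT(\cX)\) and \(\cD=\sU^{-1}(\cY)\), then gives \(\Ext^2_{\cA}(\sT(\cX)^\vee_{n-1},\sU^{-1}(\cY))=0\), which is hypothesis (4). Hypothesis (2) is exactly condition (4) of Definition \ref{def:F-compatible}. For hypothesis (1), \(\sU^{-1}(\cY)\) is closed under extensions and direct summands for two reasons: \(\sU\) is exact and additive by Lemma \ref{lem:1}, and \(\cY\), being the right class of a right \(n\)-cotorsion pair, has these closure properties (Proposition \ref{prop:HMP}). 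One could instead note that it is a Hom-perp class. The step needing most care is checking that the degree shift in Lemma \ref{lem:dimension-shifting} uses exactly the vanishing range \(1\le i\le n+1\). This is why the extra degree in Definition \ref{def:F-compatible} is imposed.

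\textbf{Local approximations and conclusion.} For condition (b), Lemma \ref{lem:basic-lifting} applies by condition (2) of Definition \ref{def:F-compatible}. For each \(B\in\cB\) it produces an exact sequence
\[
0\to\sZ(B)\to E_B\to\sT(X_B)\to0
\]
with \(E_B\in\sU^{-1}(\cY)\) and \(\sT(X_B)\in\sT(\cX)^\vee_{n-1}\). Theorem \ref{thm:local-global-right-n} then shows that \((\sT(\cX),\sU^{-1}(\cY))\) is a right \(n\)-cotorsion pair.

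For the refined statement, we apply Theorem \ref{thm:filtration-reduction} with \(\cD=\sU^{-1}(\cY)\) and \(\cQ=\sT(\cX)^\vee_{n-1}\). Both classes are extension-closed, and \(\Ext^2_{\cA}(\cQ,\cD)=0\) holds by the step above. We take \(D_r^\circ=E_r\) and \(Q_r^\circ=\sT(X_r)\), the lifts of the chosen approximations of \(C_r(A)\). The theorem then produces \(0\to A\to D_A\to Q_A\to0\) together with filtrations whose factors are \(E_r\) and \(\sT(X_r)\), as claimed.
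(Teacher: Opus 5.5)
Your proposal is correct and follows the paper's proof essentially step for step. You verify the hypotheses of Theorem \ref{thm:local-global-right-n} for $\mathcal P=\sT(\cX)$ and $\mathcal R=\sU^{-1}(\cY)$ using the degree-$(n+1)$ Ext vanishing, Lemma \ref{lem:dimension-shifting} and the lift from Lemma \ref{lem:basic-lifting}, and then read off the filtrations from Theorem \ref{thm:filtration-reduction}. The only differences are cosmetic: you derive the orthogonality equality first and use it for hypothesis (3), and the class you call a ``Hom-perp'' class is really an $\Ext$-orthogonal class.
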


\begin{proof}
Put
$
\mathcal P=\sT(\cX), \mathcal R=\sU^{-1}(\cY).
$
By Proposition \ref{prop:HMP},
$
\cY=\cX^{\perp_{[1,n]}}.
$
Hence \(\cY\) is closed under extensions and direct summands. Since
\(\sU\) is exact and additive, \(\mathcal R\) has the same closure
properties. Moreover,
$
\mathcal P^\vee_{n-1}
=\sT(\cX)^\vee_{n-1}
$
is extension-closed by Definition \ref{def:F-compatible}(4).

Let \(X\in\cX\) and \(M=(Y,g)\in\mathcal R\). Then \(Y\in\cY\).
Definition \ref{def:F-compatible}(1) and Lemma
\ref{lem:Hu-Ext} give
\[
\Ext^i_{\cA}(\sT(X),M)\cong\Ext^i_{\cB}(X,Y),
\qquad1\leq i\leq n+1.
\]
For \(1\leq i\leq n\), the group on the right vanishes because
\((\cX,\cY)\) is a right \(n\)-cotorsion pair, while for \(i=n+1\)
it vanishes by Definition \ref{def:F-compatible}(3). Therefore
\[
\Ext^i_{\cA}(\mathcal P,\mathcal R)=0,
\qquad1\leq i\leq n+1.
\]
Lemma \ref{lem:dimension-shifting} yields
$
\Ext^2_{\cA}(\mathcal P^\vee_{n-1},\mathcal R)=0.
$
For every \(B\in\cB\), the right \(n\)-cotorsion pair
\((\cX,\cY)\) gives a short exact sequence
\[
0\to B\to Y_B\to X_B\to0,
\qquad Y_B\in\cY,\quad X_B\in\cX^\vee_{n-1}.
\]
By Definition \ref{def:F-compatible}(2), Lemma
\ref{lem:basic-lifting} gives
\[
0\to\sZ(B)\to E_B\to\sT(X_B)\to0
\]
with
$
E_B\in\mathcal R, \sT(X_B)\in\mathcal P^\vee_{n-1}.
$
Thus all hypotheses of Theorem
\ref{thm:local-global-right-n} are satisfied. Hence
\[
\bigl(\mathcal P,\mathcal R\bigr)
=
\bigl(\sT(\cX),\sU^{-1}(\cY)\bigr)
\]
is a right \(n\)-cotorsion pair in \(\cA\).

Furthermore, Definition \ref{def:F-compatible}(1), Lemma
\ref{lem:induced-orthogonality} and
\(\cY=\cX^{\perp_{[1,n]}}\) give
\[
\sT(\cX)^{\perp_{[1,n]}}
=\sU^{-1}\bigl(\cX^{\perp_{[1,n]}}\bigr)
=\sU^{-1}(\cY).
\]

If \(A=0\), take
\(D_A=Q_A=0\). Assume \(A\neq0\). For every \(0\leq r<m\), the
chosen lift satisfies
$
E_r\in\mathcal R, \sT(X_r)\in\mathcal P^\vee_{n-1}.
$
Applying Theorem \ref{thm:filtration-reduction} with
$
\mathcal D=\mathcal R, \mathcal Q=\mathcal P^\vee_{n-1},
$
and with
$
D_r^\circ=E_r, Q_r^\circ=\sT(X_r),
$
gives a short exact sequence
\[
0\to A\to D_A\to Q_A\to0
\]
and filtrations
\[
0=D_m\subseteq D_{m-1}\subseteq\cdots\subseteq D_0=D_A,
\]
\[
0=Q_m\subseteq Q_{m-1}\subseteq\cdots\subseteq Q_0=Q_A
\]
whose successive factors satisfy
\[
D_r/D_{r+1}\cong E_r,\qquad
Q_r/Q_{r+1}\cong\sT(X_r).
\]
This completes the proof.
\end{proof}

\begin{corollary}\label{cor:hereditary-lifting}
Under the hypotheses of Theorem \ref{thm:lift-n-cotorsion}, assume
further that \((\cX,\cY)\) is hereditary and
\[
L_i\sF(\cX)=0\qquad\text{for all }i\geq1.
\]
Then
\[
\Ext^i_{\cA}\bigl(\sT(\cX)^\vee_{n-1},
\sU^{-1}(\cY)\bigr)=0
\qquad\text{for all }i\geq1.
\]
Consequently,
\[
\bigl(\sT(\cX),\sU^{-1}(\cY)\bigr)
\]
is a hereditary right \(n\)-cotorsion pair in \(\cA\).
\end{corollary}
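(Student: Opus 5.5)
The plan is to prove the all-degree vanishing first for $\sT(\cX)$ itself and then transfer it to $\sT(\cX)^\vee_{n-1}$ by the all-degree part of Lemma \ref{lem:dimension-shifting}.

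\textbf{Vanishing on $\sT(\cX)$.} Let $X\in\cX$ and $M=(Y,g)\in\sU^{-1}(\cY)$. Since $L_i\sF(X)=0$ for every $i\geq1$, Lemma \ref{lem:Hu-Ext} applies with arbitrary $m$. It gives natural isomorphisms
\[
\Ext^i_{\cA}(\sT(X),M)\cong\Ext^i_{\cB}(X,Y)\qquad\text{for all } i\geq1 .
\]
Because $(\cX,\cY)$ is hereditary, $\Ext^i_{\cB}(X,Y)=0$ for all $i\geq1$. Hence
\[
\Ext^i_{\cA}\bigl(\sT(\cX),\sU^{-1}(\cY)\bigr)=0\qquad\text{for all } i\geq1 .
\]

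\textbf{Passage to coresolutions.} Take $\cE=\sT(\cX)$ and $\cD=\sU^{-1}(\cY)$ in the second assertion of Lemma \ref{lem:dimension-shifting}. Concretely, let $Q\in\sT(\cX)^\vee_{n-1}$ with coresolution $0\to Q\to E^0\to\cdots\to E^{n-1}\to0$, and split it into short exact sequences $0\to K^j\to E^j\to K^{j+1}\to0$. Since $\Ext^{\ge1}(E^j,D)=0$, the long exact sequences give, for every $i\geq1$,
\[
\Ext^i_{\cA}(K^j,D)\cong\Ext^{i+1}_{\cA}(K^{j+1},D).
\]
Iterating up to $K^{n-1}=E^{n-1}$ yields $\Ext^i_{\cA}(Q,D)\cong\Ext^{i+n-1}_{\cA}(E^{n-1},D)=0$. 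This is the iteration the lemma alludes to for "arbitrary positive degree". It is the only step needing attention, and I would write it out explicitly to check the degree bookkeeping, including the case $n=1$, where $\sT(\cX)^\vee_0=\sT(\cX)$.

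\textbf{Conclusion.} By Theorem \ref{thm:lift-n-cotorsion}, $(\sT(\cX),\sU^{-1}(\cY))$ is a right $n$-cotorsion pair. Since $\sT(\cX)\subseteq\sT(\cX)^\vee_{n-1}$ via the trivial coresolution, the vanishing gives $\Ext^i_{\cA}(\sT(\cX),\sU^{-1}(\cY))=0$ for all $i\geq1$. By definition, the pair is therefore hereditary.
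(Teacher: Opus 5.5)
Your proposal is correct and follows the paper's own proof essentially step for step. You use Lemma \ref{lem:Hu-Ext} in all degrees together with heredity to get $\Ext^i_{\cA}(\sT(\cX),\sU^{-1}(\cY))=0$ for $i\geq1$, then the all-degree part of Lemma \ref{lem:dimension-shifting}, and finally Theorem \ref{thm:lift-n-cotorsion}; your explicit bookkeeping $\Ext^i_{\cA}(Q,D)\cong\Ext^{i+n-1}_{\cA}(E^{n-1},D)$ is exactly the iteration that the lemma leaves implicit.
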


\begin{proof}
Let \(X\in\cX\) and \(M=(Y,g)\in\sU^{-1}(\cY)\). Then
\(Y\in\cY\). Since \((\cX,\cY)\) is hereditary and
\(L_j\sF(X)=0\) for all \(j\geq1\), Lemma
\ref{lem:Hu-Ext} gives, for every \(i\geq1\),
\[
\Ext^i_{\cA}(\sT(X),M)
\cong\Ext^i_{\cB}(X,Y)=0.
\]
Hence
\[
\Ext^i_{\cA}\bigl(\sT(\cX),\sU^{-1}(\cY)\bigr)=0
\]
for every \(i\geq1\). The all-degree part of Lemma
\ref{lem:dimension-shifting} now gives
\[
\Ext^i_{\cA}
\bigl(\sT(\cX)^\vee_{n-1},\sU^{-1}(\cY)\bigr)=0
\]
for every \(i\geq1\). Together with Theorem
\ref{thm:lift-n-cotorsion}, this proves the assertion.
\end{proof}

\begin{theorem}\label{thm:complete-cotorsion}
Let \(\cA=\cB\ltimes_{\eta}\sF\) be image-nilpotent, and let \((\cX,\cY)\) be a hereditary complete cotorsion pair in \(\cB\). If
\[
L_1\sF(\cX)=0,\qquad \sF(\cX)\subseteq\cY,
\]
then
\[
\bigl({}^{\perp}\sU^{-1}(\cY),\sU^{-1}(\cY)\bigr)
\]
is a hereditary complete cotorsion pair in \(\cA\).
\end{theorem}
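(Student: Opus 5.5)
The plan is to put $\mathcal R=\sU^{-1}(\cY)$ and $\mathcal P={}^{\perp}\mathcal R$, and to prove three things in this order: that $(\mathcal P,\mathcal R)$ is a cotorsion pair, that it is hereditary, and that it is complete. Heredity comes before completeness because the gluing argument along the $\eta$-Loewy filtration needs the vanishing $\Ext^2_{\cA}(\mathcal P,\mathcal R)=0$.

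\emph{Cotorsion pair.} We already have $\mathcal P={}^\perp\mathcal R$ by definition, so it remains to show $\mathcal R=\mathcal P^{\perp}$.
\begin{itemize}
\item The inclusion $\mathcal R\subseteq\mathcal P^\perp$ holds by the definition of $\mathcal P$.
\item Since $L_1\sF(\cX)=0$, Lemma~\ref{lem:Hu-Ext} gives $\Ext^1_{\cA}(\sT(X),(Y,g))\cong\Ext^1_{\cB}(X,Y)$. For $X\in\cX$ and $Y\in\cY$ this group vanishes, so $\sT(\cX)\subseteq\mathcal P$.
\item Taking orthogonals and using Lemma~\ref{lem:induced-orthogonality} with $m=1$, we get $\mathcal P^\perp\subseteq\sT(\cX)^\perp=\sU^{-1}(\cX^\perp)=\sU^{-1}(\cY)=\mathcal R$.
\end{itemize}
This is essentially Hu's result; the argument is recorded here because the inclusion $\sT(\cX)\subseteq\mathcal P$ is needed again below.

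\emph{Heredity.} By Proposition~\ref{prop:cotorsion-criteria}(2), applied in $\cA$ (which has enough projectives and injectives by the standing assumption), it suffices to show that $\mathcal R$ is closed under cokernels of monomorphisms.
\begin{itemize}
\item Let $0\to M'\to M\to M''\to0$ be exact in $\cA$ with $M',M\in\mathcal R$.
\item By Lemma~\ref{lem:1}, the underlying sequence $0\to\sU M'\to\sU M\to\sU M''\to0$ is exact in $\cB$, and its first two terms lie in $\cY$.
\item Since $(\cX,\cY)$ is hereditary, $\cY$ is closed under cokernels of monomorphisms, so $\sU M''\in\cY$, that is, $M''\in\mathcal R$.
\end{itemize}
Hence $(\mathcal P,\mathcal R)$ is hereditary, and in particular $\Ext^2_{\cA}(\mathcal P,\mathcal R)=0$.

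\emph{Completeness.} By the criterion recalled after Proposition~\ref{prop:HMP}, it suffices to produce, for every $A\in\cA$, an exact sequence $0\to A\to R\to P\to0$ with $R\in\mathcal R$ and $P\in\mathcal P$. In other words, we need $\operatorname{App}(\mathcal R,\mathcal P)=\cA$.
\begin{itemize}
\item We apply Corollary~\ref{cor:reduction-Z} with $\cD=\mathcal R$ and $\cQ=\mathcal P$.
\item Both classes are extension-closed by Lemma~\ref{lem:cotorsion-closure}, and $\Ext^2_{\cA}(\mathcal P,\mathcal R)=0$ by the heredity step.
\item It therefore suffices to treat objects of the form $\sZ(B)$ with $B\in\cB$.
\end{itemize}

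For these objects we use Lemma~\ref{lem:basic-lifting} with $n=1$.
\begin{itemize}
\item A complete cotorsion pair is a right $1$-cotorsion pair, and $\cX^\vee_0=\cX$.
\item So the hypotheses of that lemma reduce exactly to $L_1\sF(\cX)=0$ and $\sF(\cX)\subseteq\cY$, which are assumed.
\item The lemma yields $0\to\sZ(B)\to E_B\to\sT(X_B)\to0$ with $E_B\in\mathcal R$ and $\sT(X_B)\in\sT(\cX)\subseteq\mathcal P$.
\end{itemize}
Hence every object has a special $\mathcal R$-preenvelope, and the pair is complete.

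The main point to watch is the dependency order: the reduction via Corollary~\ref{cor:reduction-Z} requires $\Ext^2_{\cA}(\mathcal P,\mathcal R)=0$ before any approximations exist. This is why heredity is obtained first, through the cokernel-closure criterion, which only uses exactness of $\sU$ and heredity of $(\cX,\cY)$.
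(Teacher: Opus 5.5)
Your proposal is correct and follows essentially the same route as the paper. The special $\sU^{-1}(\cY)$-preenvelopes of the objects $\sZ(B)$ come from Lemma~\ref{lem:basic-lifting} with $n=1$, and these are glued along the finite $\eta$-Loewy filtration via Theorem~\ref{thm:filtration-reduction}, using the $\Ext^2$-vanishing supplied by heredity; your use of Corollary~\ref{cor:reduction-Z} is the same step. The only difference is that you verify the cotorsion-pair property (via $\sT(\cX)\subseteq{}^{\perp}\sU^{-1}(\cY)$ and Lemma~\ref{lem:induced-orthogonality}) and heredity (via closure of $\sU^{-1}(\cY)$ under cokernels of monomorphisms, using the standing enough-injectives assumption on $\cA$) directly, whereas the paper cites Hu's Theorem~3.5 and simply asserts heredity.
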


\begin{proof}
In the general \(\eta\)-extension \(\cA\), by \cite[Theorem 3.5]{Hu}, the condition \(L_1\sF(\cX)=0\) implies that
\[
\bigl({}^{\perp}\sU^{-1}(\cY),\sU^{-1}(\cY)\bigr)
\]
is a cotorsion pair in \(\cA\). Moreover, it is hereditary because \((\cX,\cY)\) is hereditary. It remains to prove completeness.

Let \(B\in\cB\). Since \((\cX,\cY)\) is complete, there is a short exact sequence
\[
0\to B\to Y_B\to X_B\to0,\qquad Y_B\in\cY,\quad X_B\in\cX.
\]
Since \(\cX^\vee_0=\cX\), Lemma \ref{lem:basic-lifting} gives
\begin{equation}\label{eq:special-Z}
0\to\sZ(B)\to E_B\to\sT(X_B)\to0,
\qquad E_B\in\sU^{-1}(\cY).
\end{equation}
For \(M=(Y,g)\in\sU^{-1}(\cY)\), one has \(Y\in\cY\), and Lemma \ref{lem:Hu-Ext} yields
\[
\Ext^1_{\cA}(\sT(X_B),M)\cong\Ext^1_{\cB}(X_B,Y)=0.
\]
Hence \(\sT(X_B)\in{}^{\perp}\sU^{-1}(\cY)\), so \eqref{eq:special-Z} is a special \(\sU^{-1}(\cY)\)-preenvelope of \(\sZ(B)\).

Since the induced cotorsion pair is hereditary, the classes
\[
{}^{\perp}\sU^{-1}(\cY)\quad\text{and}\quad\sU^{-1}(\cY)
\]
are extension-closed and satisfy
\[
\Ext^2_{\cA}\bigl({}^{\perp}\sU^{-1}(\cY),\sU^{-1}(\cY)\bigr)=0.
\]
Let \(A\in\cA\). Its finite \(\eta\)-Loewy filtration has factors \(\sZ(C_r(A))\). Applying \eqref{eq:special-Z} to each \(C_r(A)\), and then Theorem \ref{thm:filtration-reduction} gives a short exact sequence
\[
0\to A\to D_A\to Q_A\to0,
\qquad D_A\in\sU^{-1}(\cY),\quad
Q_A\in{}^{\perp}\sU^{-1}(\cY).
\]
Thus every object of \(\cA\) admits a special
\(\sU^{-1}(\cY)\)-preenvelope. Proposition
\ref{prop:cotorsion-criteria}(1) therefore shows that the
cotorsion pair is complete.
\end{proof}

\begin{remark}\label{rem:comparison-Hu-complete}
The theorem \cite[Theorem 3.13]{Hu} is proved under the additional
assumption \(\eta=0\), in which case every object has image
length at most two.
\end{remark}

\section{Examples and applications}\label{sec:examples}
We first treat split nilpotent extensions with non-zero multiplication, the remaining examples specialize the general construction to familiar right trivial extensions.

\begin{example}\label{ex:split-nilpotent-ring}
Let \(R\) be a ring and let \(I\) be an \(R\)-\(R\)-bimodule endowed with an associative \(R\)-bimodule homomorphism
\[
\mu:I\otimes_RI\to I.
\]
Set \(\Gamma=R\oplus I\) with multiplication
\[
(r,a)(s,b)=(rs,rb+as+\mu(a\otimes b)).
\]
Then \(J=0\oplus I\) is an ideal of \(\Gamma\) and \(\Gamma/J\cong R\).

For \(r\geq1\), write \(I^{\otimes_Rr}=I\otimes_R\cdots\otimes_RI\) and define the multiplication
\[
\mu^{[r]}:I^{\otimes_Rr}\to I
\]
recursively by
\[
\mu^{[1]}=\operatorname{id}_I,\qquad \mu^{[2]}=\mu,\qquad
\mu^{[r+1]}=\mu\bigl(\mu^{[r]}\otimes_R\operatorname{id}_I)\quad(r\geq2).
\]
Since \(\mu\) is associative, one also has
\[
\mu^{[r+1]}=\mu\bigl(\operatorname{id}_I\otimes_R\mu^{[r]}\bigr).
\]
Thus \(\mu^{[r]}\) is independent of the placement of
parentheses and
\[
\mu^{[r]}(a_1\otimes\cdots\otimes a_r)=a_1a_2\cdots a_r.
\]

For \(r\geq1\), let \(J^r\) be the ideal generated by all products
\(j_1\cdots j_r\) with \(j_1,\ldots,j_r\in J\). Since
\[
(0,a_1)\cdots(0,a_r)
=\bigl(0,\mu^{[r]}(a_1\otimes\cdots\otimes a_r)\bigr),
\]
one has
$
J^r=0\oplus\Img\mu^{[r]}.
$
Assume from now on that
$
J^d=0
$
for some \(d\geq1\). Equivalently, \(\mu^{[d]}=0\).

Put \(\sF=I\otimes_R-\). Under the canonical identification
\[
\sF^2(M)
=I\otimes_R(I\otimes_RM)
\cong(I\otimes_RI)\otimes_RM,
\]
define
\[
\eta_M=\mu\otimes_R\operatorname{id}_M:
I\otimes_RI\otimes_RM\to I\otimes_RM.
\]
Thus
\[
\eta_M(a\otimes b\otimes m)
=\mu(a\otimes b)\otimes m.
\]
Naturality is immediate, and the associativity of \(\mu\) gives
\[
\eta\circ\sF\eta=\eta\circ\eta\sF.
\]
Thus \(\eta:\sF^2\to\sF\) is an associative natural transformation.

Let
\[
\iota:R\to\Gamma,\qquad r\mapsto(r,0)
\]
be the canonical ring homomorphism. If \(M\) is a left
\(\Gamma\)-module, restriction of scalars along \(\iota\) makes \(M\)
a left \(R\)-module. The action of \(J=0\oplus I\) on \(M\) defines
a map
\[
f_M:I\otimes_RM\to M,\qquad f_M(a\otimes m)=(0,a)m.
\]
This map is well defined. Indeed, for \(a\in I\), \(r\in R\) and
\(m\in M\),
\[
f_M(ar\otimes m)=(0,ar)m=(0,a)(r,0)m
=f_M(a\otimes rm),
\]
and
\[
f_M(ra\otimes m)=(0,ra)m=(r,0)(0,a)m
=r f_M(a\otimes m).
\]
Thus \(f_M\) is \(R\)-linear.

The associativity of the \(\Gamma\)-action implies the object
relation in the \(\eta\)-extension. For \(a,b\in I\) and \(m\in M\),
\[
\begin{aligned}
f_M(\operatorname{id}_I\otimes f_M)(a\otimes b\otimes m)
&=(0,a)\bigl((0,b)m\bigr)\\
&=((0,a)(0,b))m\\
&=(0,\mu(a\otimes b))m\\
&=f_M(\mu\otimes_R\operatorname{id}_M)
(a\otimes b\otimes m).
\end{aligned}
\]
Hence
\[
f_M(\operatorname{id}_I\otimes f_M)
=f_M(\mu\otimes_R\operatorname{id}_M)
=f_M\eta_M,
\]
so \((M,f_M)\) is an object of
\[
R\text{-}\Mod\ltimes_\eta(I\otimes_R-).
\]

Conversely, let \((M,f)\) be an object of this \(\eta\)-extension.
Define an action of \(\Gamma\) on \(M\) by
\[
(r,a)\cdot_f m=rm+f(a\otimes m).
\]
The element \((1_R,0)\) acts as the identity. Moreover,
\[
\begin{aligned}
(r,a)\cdot_f\bigl((s,b)\cdot_fm\bigr)
&=rsm+r f(b\otimes m)+f(a\otimes sm)
  +f\bigl(a\otimes f(b\otimes m)\bigr),
\end{aligned}
\]
whereas
\[
\begin{aligned}
\bigl((r,a)(s,b)\bigr)\cdot_fm
&=rsm+f(rb\otimes m)+f(as\otimes m)
  +f(\mu(a\otimes b)\otimes m).
\end{aligned}
\]
Since \(f\) is \(R\)-linear and balanced,
\[
f(rb\otimes m)=r f(b\otimes m),\qquad
f(as\otimes m)=f(a\otimes sm),
\]
and the object relation gives
\[
f\bigl(a\otimes f(b\otimes m)\bigr)
=f(\mu(a\otimes b)\otimes m).
\]
Thus the two expressions coincide, and \(\cdot_f\) defines a left
\(\Gamma\)-module structure on \(M\).

Let \((M,f)\) and \((N,g)\) be objects of the \(\eta\)-extension. An
\(R\)-linear map \(\alpha:M\to N\) is \(\Gamma\)-linear if and only if
it commutes with the action of \(I\), that is,
\[
\alpha f=g(\operatorname{id}_I\otimes\alpha).
\]
Indeed, this equality is equivalent to
\[
\alpha\bigl((0,a)m\bigr)=(0,a)\alpha(m)
\]
for all \(a\in I\) and \(m\in M\), \(R\)-linearity already guarantees
compatibility with the action of every \((r,0)\).

We therefore obtain mutually inverse functors
\[
\Phi:\Gamma\text{-}\Mod\to
R\text{-}\Mod\ltimes_\eta(I\otimes_R-),
\qquad
M\mapsto(M,f_M),
\]
and
\[
\Psi:R\text{-}\Mod\ltimes_\eta(I\otimes_R-)
\longrightarrow\Gamma\text{-}\Mod,
\qquad
(M,f)\longmapsto M_f,
\]
where \(M_f\) denotes the \(R\)-module \(M\) endowed with the left
\(\Gamma\)-action
\[
(r,a)\cdot_fm=rm+f(a\otimes m).
\]
On morphisms, \(\Psi\) acts as the identity on the underlying
\(R\)-linear maps.
Both functors act identically on the underlying \(R\)-modules and
\(R\)-linear maps. For a \(\Gamma\)-module \(M\),
\[
(r,a)\cdot_{f_M}m
=rm+f_M(a\otimes m)
=(r,0)m+(0,a)m
=(r,a)m,
\]
so \(\Psi\Phi(M)=M\) as a \(\Gamma\)-module. Conversely, for an
object \((M,f)\) of the \(\eta\)-extension,
\[
f_{M_f}(a\otimes m)
=(0,a)\cdot_fm
=f(a\otimes m),
\]
and hence \(\Phi\Psi(M,f)=(M,f)\). Therefore
\[
\Phi\Psi
=1_{R\text{-}\Mod\ltimes_\eta(I\otimes_R-)},
\qquad
\Psi\Phi=1_{\Gamma\text{-}\Mod}.
\]
Hence there is an exact
equivalence
\[
\Gamma\text{-}\Mod\simeq
R\text{-}\Mod\ltimes_\eta(I\otimes_R-),
\]
compare \cite{FossumGriffithReiten,BeligiannisCleft}.

For \(r\geq1\), the \(r\)-fold natural transformation
\[
\eta^{[r]}:\sF^r\to\sF
\]
has component
\[
\eta_M^{[r]}
=\mu^{[r]}\otimes_R\operatorname{id}_M:
I^{\otimes_Rr}\otimes_RM\to I\otimes_RM.
\]
By the assumption \(J^d=0\), one has \(\mu^{[d]}=0\). Since
\[
\eta_M^{[d]}
=\mu^{[d]}\otimes_R\operatorname{id}_M
\]
for every \(M\), it follows that \(\eta^{[d]}=0\). Hence Theorem
\ref{thm:nilpotence-criterion} shows that the extension is uniformly
image-nilpotent of index at most \(d\).

For a \(\Gamma\)-module \(M\), the iterated action map is
\[
f_M^{[r]}:
I^{\otimes_Rr}\otimes_RM\to M,
\]
given by
\[
f_M^{[r]}(a_1\otimes\cdots\otimes a_r\otimes m)
=(0,a_1)\cdots(0,a_r)m.
\]
Therefore
$
\operatorname{Im}f_M^{[r]}=J^rM.
$
By Theorem \ref{thm:power-image},
$
\sU\bigl(\sI^r(\Phi(M))\bigr)
\cong\operatorname{Im}f_M^{[r]}=J^rM.
$
Thus, under the equivalence \(\Phi\), the \(\eta\)-Loewy filtration
corresponds to the \(J\)-adic filtration
\[
M\supseteq JM\supseteq J^2M\supseteq\cdots\supseteq J^dM=0.
\]
\end{example}

\begin{corollary}\label{cor:split-nilpotent-cotorsion}
Continue with the notation of Example \ref{ex:split-nilpotent-ring}. Put
$
\cY_\Gamma=
\{M\in\Gamma\text{-}\Mod\mid \operatorname{Res}_R^\Gamma(M)\in\cY\}.
$
The following statements hold.
\begin{enumerate}
\item[(1)] If \((\cX,\cY)\) is an \(\sF\)-compatible right
\(n\)-cotorsion pair in \(R\text{-}\Mod\), where
\(\sF=I\otimes_R-\), then
$
\bigl(\Gamma\otimes_R\cX,\cY_\Gamma\bigr)
$
is a right \(n\)-cotorsion pair in \(\Gamma\text{-}\Mod\).

\item[(2)] If \((\cX,\cY)\) is a hereditary complete cotorsion pair
satisfying
\[
\Tor_1^R(I,\cX)=0,\qquad I\otimes_R\cX\subseteq\cY,
\]
then
$
\bigl({}^\perp\cY_\Gamma,\cY_\Gamma\bigr)
$
is a hereditary complete cotorsion pair in
\(\Gamma\text{-}\Mod\), where the left orthogonal is taken in
\(\Gamma\text{-}\Mod\).
\end{enumerate}
\end{corollary}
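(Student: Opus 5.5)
The plan is to transport Theorem \ref{thm:lift-n-cotorsion} and Theorem \ref{thm:complete-cotorsion} along the exact equivalence
\[
\Phi:\Gamma\text{-}\Mod\to\cA=R\text{-}\Mod\ltimes_\eta(I\otimes_R-)
\]
of Example \ref{ex:split-nilpotent-ring}. Two inputs are available from that example.

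\begin{itemize}
\item Under the standing assumption \(J^d=0\), the extension \(\cA\) is uniformly image-nilpotent, and hence image-nilpotent.
\item \(\Phi\) and its quasi-inverse \(\Psi\) are exact equivalences. By Lemma \ref{lem:exact-equivalence}, they therefore preserve Ext groups, finite coresolutions, direct summands, right \(n\)-cotorsion pairs, and hereditary complete cotorsion pairs.
\end{itemize}

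It then suffices to identify the relevant classes in \(\cA\) with the classes in the statement.

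\emph{Identifying \(\sU^{-1}(\cY)\).} Since \(\Phi\) does not change the underlying \(R\)-module, \(\sU\circ\Phi=\operatorname{Res}^\Gamma_R\). Hence \(\Psi(\sU^{-1}(\cY))=\cY_\Gamma\).

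\emph{Identifying \(\sT(\cX)\).} For \(X\in R\text{-}\Mod\), we have \(\Gamma\otimes_R X\cong (R\oplus I)\otimes_R X=X\oplus(I\otimes_R X)\). We claim that the \(\Gamma\)-action, read through \(f_M\), is exactly \(t_X\). Indeed, \((0,a)\) sends \((x,b\otimes y)\) to \((0,\,a\otimes x+\mu(a\otimes b)\otimes y)\). As a matrix from \(\sF(X)\oplus\sF^2(X)\) this is
\[
\begin{pmatrix}0&0\\ 1&\eta_X\end{pmatrix}=t_X.
\]
So \(\Phi(\Gamma\otimes_R X)\cong\sT(X)\), naturally in \(X\), and \(\Psi(\sT(\cX))=\Gamma\otimes_R\cX\) up to isomorphism. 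Since all subcategories are closed under isomorphisms, this is harmless.

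\emph{Part (1).} The \(\sF\)-compatibility hypothesis is exactly the hypothesis of Theorem \ref{thm:lift-n-cotorsion}, with \(L_i\sF=\Tor_i^R(I,-)\). That theorem gives a right \(n\)-cotorsion pair \((\sT(\cX),\sU^{-1}(\cY))\) in \(\cA\). Applying \(\Psi\) together with Lemma \ref{lem:exact-equivalence} yields that \((\Gamma\otimes_R\cX,\cY_\Gamma)\) is a right \(n\)-cotorsion pair in \(\Gamma\text{-}\Mod\).

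\emph{Part (2).} Since \(L_1\sF(\cX)=\Tor_1^R(I,\cX)\) and \(\sF(\cX)=I\otimes_R\cX\), the hypotheses are those of Theorem \ref{thm:complete-cotorsion}. It gives a hereditary complete cotorsion pair \(({}^\perp\sU^{-1}(\cY),\sU^{-1}(\cY))\) in \(\cA\). Because \(\Psi\) preserves \(\Ext^1\), it carries the left orthogonal in \(\cA\) to the left orthogonal \({}^\perp\cY_\Gamma\) computed in \(\Gamma\text{-}\Mod\). Preservation of completeness and heredity then finishes the argument.

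\emph{Main obstacle.} The only non-formal step is the identification \(\Phi(\Gamma\otimes_R X)\cong\sT(X)\). This needs care with the direction of the canonical identification \(\sF^2(X)\cong(I\otimes_R I)\otimes_R X\) used to define \(\eta\), and with checking that the identification is natural in \(X\). I would settle it by the explicit computation on elementary tensors sketched above.
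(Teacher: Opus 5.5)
Your proposal is correct and follows essentially the same route as the paper. You transport Theorems \ref{thm:lift-n-cotorsion} and \ref{thm:complete-cotorsion} along the exact equivalence $\Phi$ of Example \ref{ex:split-nilpotent-ring}. You use $\sU\Phi=\operatorname{Res}^\Gamma_R$ to identify $\sU^{-1}(\cY)$ with $\cY_\Gamma$, and the isomorphism $\Gamma\otimes_R X\cong X\oplus(I\otimes_R X)$ to identify $\sT(\cX)$ with $\Gamma\otimes_R\cX$. Your elementary-tensor computation is the paper's verification $\theta_X f_{\Gamma\otimes_R X}=t_X(\operatorname{id}_I\otimes_R\theta_X)$. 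Only this objectwise isomorphism is needed to identify the classes, not naturality in $X$.
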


\begin{proof}
Let
\[
\Phi:\Gamma\text{-}\Mod\xrightarrow{\sim}
R\text{-}\Mod\ltimes_\eta(I\otimes_R-)
\]
be the exact equivalence constructed in Example
\ref{ex:split-nilpotent-ring}, and let \(\Psi\) be a quasi-inverse.
The same example shows that the \(\eta\)-extension is image-nilpotent.
Since
$
\sU\Phi=\operatorname{Res}_R^\Gamma,
$
we have
$
\Psi\bigl(\sU^{-1}(\cY)\bigr)=\cY_\Gamma.
$

For \(X\in R\text{-}\Mod\), consider the \(R\)-linear isomorphism
\[
\theta_X:\Gamma\otimes_RX\to X\oplus(I\otimes_RX),\qquad
\theta_X((r,a)\otimes x)=(rx,a\otimes x).
\]
It satisfies
$
\theta_Xf_{\Gamma\otimes_RX}
=t_X(\operatorname{id}_I\otimes_R\theta_X),
$
and hence defines an isomorphism in the \(\eta\)-extension
$
\Phi(\Gamma\otimes_RX)\cong\sT(X).
$
Therefore
$
\Psi\bigl(\sT(\cX)\bigr)=\Gamma\otimes_R\cX.
$

By Theorem \ref{thm:lift-n-cotorsion},
$
\bigl(\sT(\cX),\sU^{-1}(\cY)\bigr)
$
is a right \(n\)-cotorsion pair. Applying Lemma \ref{lem:exact-equivalence} gives
$
\bigl(\Gamma\otimes_R\cX,\cY_\Gamma\bigr)
$
as a right \(n\)-cotorsion pair in \(\Gamma\text{-}\Mod\). This proves
{\rm(1)}.

For {\rm(2)}, since \(\sF=I\otimes_R-\), one has
\[
L_1\sF(\cX)\cong\Tor_1^R(I,\cX)=0,
\qquad
\sF(\cX)=I\otimes_R\cX\subseteq\cY.
\]
Theorem \ref{thm:complete-cotorsion} therefore gives a hereditary
complete cotorsion pair
\[
\bigl({}^\perp\sU^{-1}(\cY),\sU^{-1}(\cY)\bigr)
\]
in the \(\eta\)-extension. Since exact equivalences preserve
\(\Ext^1\)-orthogonals,
\[
\Psi\bigl({}^\perp\sU^{-1}(\cY)\bigr)
={}^\perp\cY_\Gamma.
\]
Applying Lemma \ref{lem:exact-equivalence} once more yields
$
\bigl({}^\perp\cY_\Gamma,\cY_\Gamma\bigr)
$
as a hereditary complete cotorsion pair in
\(\Gamma\text{-}\Mod\).
\end{proof}

\begin{corollary}\label{cor:comma}
Let \(\cC,\cD\) be abelian categories and let
\(G:\cC\to\cD\) be right exact. Let \((\cU,\cX)\) and
\((\cV,\cY)\) be right \(n\)-cotorsion pairs in \(\cC\) and
\(\cD\), respectively. Define
\[
\mathfrak P_G(\cU,\cV)=
\left\{
\begin{pmatrix}U\\D\end{pmatrix}_{\alpha}
\ \middle|\
U\in\cU,\ \alpha:G(U)\to D\text{ is monic},
\ \Coker\alpha\in\cV
\right\},
\]
and
\[
\mathfrak A_G(\cX,\cY)=
\left\{
\begin{pmatrix}X\\Y\end{pmatrix}_{\alpha}
\ \middle|\
X\in\cX,\ Y\in\cY
\right\}.
\]
Assume
$
L_iG(\cU)=0 (1\leq i\leq n+1),
$
that \(\mathfrak A_G(\cX,\cY)\) is coresolving, and that
$
\mathfrak P_G(\cU,\cV)^\vee_{n-1}
$
is closed under extensions. Then
$
\bigl(\mathfrak P_G(\cU,\cV),
\mathfrak A_G(\cX,\cY)\bigr)
$
is a right \(n\)-cotorsion pair in \((G\downarrow\cD)\).
\end{corollary}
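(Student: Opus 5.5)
The plan is to realize the comma category as a right trivial extension and then apply Theorem \ref{thm:local-global-right-n} directly.

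\textbf{Step 1: identify the comma category.} Set \(\cB=\cC\times\cD\) and \(\sF(C,D)=(0,G(C))\), with \(\eta=0\); note \(\sF^2=0\). An object \(((C,D),f)\) of \(\cB\ltimes\sF\) is a morphism \(f:(0,G(C))\to(C,D)\), that is, a morphism \(\alpha:G(C)\to D\). The object relation is automatic, so \((G\downarrow\cD)\simeq\cB\ltimes\sF\) is an exact equivalence. By Theorem \ref{thm:nilpotence-criterion} this right trivial extension is uniformly image-nilpotent of index at most two. Under this equivalence:
\[
\sZ(C,D)=\begin{pmatrix}C\\D\end{pmatrix}_0,\qquad \sU^{-1}(\cX\times\cY)=\mathfrak A_G(\cX,\cY),\qquad \sT(U,V)=\begin{pmatrix}U\\G(U)\oplus V\end{pmatrix}_{\binom10}.
\]
Since \(L_i\sF(C,D)=(0,L_iG(C))\), the hypothesis \(L_iG(\cU)=0\) for \(1\le i\le n+1\) gives \(L_i\sF(\cU\times\cV)=0\) in the same range. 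I would not try to verify full \(\sF\)-compatibility, since \(\sF(\cX^\vee_{n-1})\subseteq\cY\) is not assumed. Instead I apply Theorem \ref{thm:local-global-right-n} with \(\mathcal P=\mathfrak P_G(\cU,\cV)\) and \(\mathcal R=\mathfrak A_G(\cX,\cY)\), transported by Lemma \ref{lem:exact-equivalence}.

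\textbf{Step 2: check hypotheses (1)--(4).} For (1), \(\mathcal R=\sU^{-1}(\cX\times\cY)\) is closed under extensions and summands because \(\cX\) and \(\cY\) are (Proposition \ref{prop:HMP}) and \(\sU\) is exact. Hypothesis (2) is assumed. Hypothesis (4) follows from (3) because \(\mathcal R\) is coresolving, by the last clause of Theorem \ref{thm:local-global-right-n}.

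For (3), take \(P=\binom{U}{D}_\alpha\in\mathcal P\). It sits in a short exact sequence
\[
0\to\sT(U,0)\to P\to\sZ(0,\Coker\alpha)\to0,
\]
where \(\sT(U,0)=\binom{U}{G(U)}_1\). This uses that \(\alpha\) is monic. Against \(M=\binom{X}{Y}_\beta\in\mathcal R\):
\begin{itemize}
\item By Lemma \ref{lem:Hu-Ext}, \(\Ext^i(\sT(U,0),M)\cong\Ext^i_\cC(U,X)=0\) for \(1\le i\le n\).
\item For \(\sZ(0,V)=\sT(0,V)\), with \(V=\Coker\alpha\in\cV\), the same lemma gives \(\Ext^i(\sZ(0,V),M)\cong\Ext^i_\cD(V,Y)=0\). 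Here \(L_i\sF(0,V)=0\) trivially.
\end{itemize}
The long exact sequence then yields \(\Ext^i(P,M)=0\) for \(1\le i\le n\). The lemma is applied with \(m=n+1\), which is what the assumed range permits.

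\textbf{Step 3: local approximations of \(\sZ(C,D)=\binom{C}{D}_0\).} This is the step I expect to be the main obstacle.
\begin{itemize}
\item Take \(0\to C\to X_C\to U_C\to0\) with \(X_C\in\cX\) and \(U_C\in\cU^\vee_{n-1}\).
\item Take \(0\to D\oplus G(X_C)\to Y'\to V'\to0\) with \(Y'\in\cY\) and \(V'\in\cV^\vee_{n-1}\).
\end{itemize}
Form \(R=\binom{X_C}{Y'}_\beta\), where \(\beta\) is \(G(X_C)\to D\oplus G(X_C)\to Y'\). Then \(R\in\mathcal R\), and the map \(\binom{C}{D}_0\to R\) is a monomorphism in the comma category. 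Its cokernel is \(\binom{U_C}{Y'/D}_{\bar\beta}\). By right exactness of \(G\) and \(L_1G(U_C)=0\) (dimension shifting as in Lemma \ref{lem:T-coresolutions}), the induced \(\bar\beta:G(U_C)\to Y'/D\) is monic with cokernel \(V'\). Thus the cokernel is an extension of a \(\sT(\cU)\)-coresolved piece by \(\sZ(0,V')\).

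It remains to see that this cokernel lies in \(\mathcal P^\vee_{n-1}\). I would coresolve \(U_C\) and \(V'\) by objects of \(\cU\) and \(\cV\), apply \(\sT\) (exact on these terms by the Tor-vanishing), and splice componentwise to produce a \(\mathfrak P_G\)-coresolution of length \(n-1\). Here \(\mathfrak P_G\) is built as extensions of \(\sT(\cU,0)\) by \(\sZ(0,\cV)\), and closure of \(\mathcal P^\vee_{n-1}\) under extensions (hypothesis (2)) absorbs the gluing. With (b) established, Theorem \ref{thm:local-global-right-n} gives the right \(n\)-cotorsion pair, and Lemma \ref{lem:exact-equivalence} transports it to \((G\downarrow\cD)\).
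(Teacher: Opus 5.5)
Your Steps 1 and 2 are sound. The Ext-vanishing via $0\to\sT(U,0)\to P\to\sZ(0,\Coker\alpha)\to0$ and Lemma \ref{lem:Hu-Ext} is a correct self-contained replacement for the paper's appeal to \cite[Lemma 3.2(1)]{LiuFengHuZhang}. Step 3, however, fails at its first move: the map $\binom{C}{D}_0\to R=\binom{X_C}{Y'}_\beta$ is not a morphism of $(G\downarrow\cD)$. A pair $(a,b)$ is a morphism $\binom{C}{D}_0\to\binom{X_C}{Y'}_\beta$ only if $\beta\circ G(a)=b\circ0=0$. Take $a=c\colon C\to X_C$, and let $\beta$ be the split inclusion $G(X_C)\to D\oplus G(X_C)$ followed by the monomorphism into $Y'$. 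Then $\beta$ is monic, so the condition forces $G(c)=0$. But $G(c)$ is itself monic, because $L_1G(U_C)=0$. So the compatibility fails whenever $G(C)\neq0$, and no choice of $b$ repairs it. Your cokernel claim is wrong for the same reason. The quotient $Y'/D$ contains $(D\oplus G(X_C))/D\cong G(X_C)$ with quotient $V'$, so there is no induced monomorphism $G(U_C)\to Y'/D$ with cokernel $V'$.

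The structure map of the envelope has to kill $G(C)$, i.e.\ factor through $G(p)\colon G(X_C)\to G(U_C)$, where $p\colon X_C\to U_C$ is the cokernel of $c$. So embed $D\oplus G(U_C)$, not $D\oplus G(X_C)$, into $\cY$: take $0\to D\oplus G(U_C)\to Y'\to V'\to0$ with $Y'\in\cY$ and $V'\in\cV^\vee_{n-1}$. Let $\beta$ be $G(X_C)\xrightarrow{G(p)}G(U_C)\to D\oplus G(U_C)\to Y'$. Then $\beta G(c)=G(pc)=0$, so $c$ together with $D\to D\oplus G(U_C)\to Y'$ gives a monomorphism $\sZ(C,D)\to R$ with $R\in\mathcal R$. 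Since $G(p)$ is epic, its cokernel is $Q=\binom{U_C}{Y'/D}_{\bar\beta}$, where $\bar\beta$ is the inclusion $G(U_C)\cong(D\oplus G(U_C))/D\hookrightarrow Y'/D$; it is monic with cokernel $V'$. (This is the object $E_B$ of Lemma \ref{lem:basic-lifting}, with $\eta=0$, for the sequence $0\to(C,D)\to(X_C,D)\to(U_C,0)\to0$, pushed into $\cY$ on the $\cD$-side.) Hence $0\to\sT(U_C,0)\to Q\to\sZ(0,V')\to0$ is exact. Both end terms lie in $\mathcal P^\vee_{n-1}$:
\begin{itemize}
\item For $\sT(U_C,0)$, use Lemma \ref{lem:T-coresolutions}, since $\sT(\cU\times\{0\})\subseteq\mathcal P$ and $L_1G(\cU^\vee_{n-1})=0$ by dimension shifting from $L_iG(\cU)=0$, $1\le i\le n$.
\item For $\sZ(0,V')$, apply the exact functor $\sZ(0,-)$ to a $\cV$-coresolution of $V'$, since $\sZ(0,\cV)\subseteq\mathcal P$.
\end{itemize}
Hypothesis (2) then gives $Q\in\mathcal P^\vee_{n-1}$, with no componentwise splicing needed. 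With this repair your proof becomes a self-contained version of what the paper obtains by citing the construction in the proof of \cite[Theorem 3.5(1)]{LiuFengHuZhang}.
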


\begin{proof}
Define
\[
\mathsf H:\cC\times\cD\to\cC\times\cD,\qquad
\mathsf H(C,D)=(0,G(C)).
\]
Then \(\mathsf H^2=0\), and there is an exact equivalence
$
(G\downarrow\cD)\simeq(\cC\times\cD)\ltimes\mathsf H,
$
see \cite[Remark 2.2(1)]{LiuFengHuZhang}. Hence this extension is
uniformly image-nilpotent of index at most two.

In Theorem \ref{thm:local-global-right-n}, take
$
\mathcal P=\mathfrak P_G(\cU,\cV), \mathcal R=\mathfrak A_G(\cX,\cY).
$
By \cite[Lemma 3.2(1)]{LiuFengHuZhang},
$
\mathcal R=\mathcal P^{\perp_{[1,n]}},
$
and hence
\[
\Ext^i_{(G\downarrow\cD)}(\mathcal P,\mathcal R)=0,
\qquad1\leq i\leq n.
\]
The equality also shows that \(\mathcal R\) is closed under direct
summands. Since \(\mathcal R\) is coresolving by hypothesis, it is
extension-closed, and the last assertion of Theorem
\ref{thm:local-global-right-n} gives
$
\Ext^2_{(G\downarrow\cD)}
(\mathcal P^\vee_{n-1},\mathcal R)=0.
$
Moreover, \(\mathcal P^\vee_{n-1}\) is extension-closed by hypothesis.

Under the above equivalence, the zero embedding of
\((C,D)\in\cC\times\cD\) is
$
\sZ(C,D)=\begin{pmatrix}C\\D\end{pmatrix}_0.
$
The construction in the proof of
\cite[Theorem 3.5(1)]{LiuFengHuZhang} gives, for every
\(C\in\cC\) and \(D\in\cD\), a short exact sequence
\[
0\to\begin{pmatrix}C\\D\end{pmatrix}_0
\to R_{C,D}\to P_{C,D}\to0
\]
with
$
R_{C,D}\in\mathcal R, P_{C,D}\in\mathcal P^\vee_{n-1}.
$
Thus all hypotheses of Theorem
\ref{thm:local-global-right-n} are satisfied. Therefore
\[
\bigl(\mathfrak P_G(\cU,\cV),
\mathfrak A_G(\cX,\cY)\bigr)
\]
is a right \(n\)-cotorsion pair in \((G\downarrow\cD)\). This
recovers \cite[Theorem 3.5(1)]{LiuFengHuZhang}.
\end{proof}

\begin{corollary}\label{cor:triangular}
Let
\[
\Lambda=\begin{pmatrix}A&0\\M&B\end{pmatrix}
\]
be a formal triangular matrix ring. Let \((\cU,\cX)\) and
\((\cV,\cY)\) be right \(n\)-cotorsion pairs in \(A\text{-}\Mod\)
and \(B\text{-}\Mod\), respectively. Set
\[
\mathfrak P_M(\cU,\cV)=
\left\{
\begin{pmatrix}U\\V'\end{pmatrix}_{\alpha}
\ \middle|\
U\in\cU,\ \alpha:M\otimes_AU\to V'\text{ is monic},
\ \Coker\alpha\in\cV
\right\},
\]
and
\[
\mathfrak A_M(\cX,\cY)=
\left\{
\begin{pmatrix}X\\Y\end{pmatrix}_{\alpha}
\ \middle|\
X\in\cX,\ Y\in\cY
\right\}.
\]
The following statements hold.
\begin{enumerate}
\item If $\Tor_i^A(M,\cU)=0 (1\leq i\leq n+1),$ \(\mathfrak A_M(\cX,\cY)\) is coresolving, and
\(\mathfrak P_M(\cU,\cV)^\vee_{n-1}\) is extension-closed, then
$
\bigl(\mathfrak P_M(\cU,\cV),
\mathfrak A_M(\cX,\cY)\bigr)
$
is a right \(n\)-cotorsion pair in \(\Lambda\text{-}\Mod\).

\item Suppose \(n=1\), \((\cU,\cX)\) and \((\cV,\cY)\) are hereditary complete
cotorsion pairs, and
$
\Tor_i^A(M,\cU)=0 (i\geq1).
$
Then
$
\bigl(\mathfrak P_M(\cU,\cV),
\mathfrak A_M(\cX,\cY)\bigr)
$
is a hereditary complete cotorsion pair in
\(\Lambda\text{-}\Mod\).
\end{enumerate}
\end{corollary}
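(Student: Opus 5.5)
Part (1) follows the pattern of Corollary \ref{cor:comma}, with the comma category replaced by $\Lambda$-modules. A left $\Lambda$-module is a triple $\left(\begin{smallmatrix}U\\V'\end{smallmatrix}\right)_\alpha$ with $\alpha:M\otimes_AU\to V'$ $B$-linear. This gives an exact isomorphism $\Lambda\text{-}\Mod\simeq(G\downarrow B\text{-}\Mod)$ with $G=M\otimes_A-:A\text{-}\Mod\to B\text{-}\Mod$. The functor $G$ is right exact, and $L_iG(\cU)\cong\Tor_i^A(M,\cU)$. Under this identification $\mathfrak P_M(\cU,\cV)=\mathfrak P_G(\cU,\cV)$ and $\mathfrak A_M(\cX,\cY)=\mathfrak A_G(\cX,\cY)$.

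The hypotheses of (1) then translate verbatim into those of Corollary \ref{cor:comma}: $L_iG(\cU)=0$ for $1\le i\le n+1$, $\mathfrak A_G(\cX,\cY)$ coresolving, and $\mathfrak P_G(\cU,\cV)^\vee_{n-1}$ extension-closed. That corollary gives the right $n$-cotorsion pair in $(G\downarrow B\text{-}\Mod)$, and Lemma \ref{lem:exact-equivalence} transports it back to $\Lambda\text{-}\Mod$. Alternatively, one can argue directly: $\Lambda\text{-}\Mod\simeq(A\text{-}\Mod\times B\text{-}\Mod)\ltimes\mathsf H$ with $\mathsf H(C,D)=(0,M\otimes_AC)$. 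Since $\mathsf H^2=0$, Theorem \ref{thm:nilpotence-criterion} gives index at most two, and we can feed the same data into Theorem \ref{thm:local-global-right-n}.

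For (2), take $n=1$. A right $1$-cotorsion pair that is also hereditary complete is a complete cotorsion pair, and $\cU^\vee_0=\cU$. So $\mathfrak P_M(\cU,\cV)^\vee_0=\mathfrak P_M(\cU,\cV)$, and part (1) gives the approximation sequences once the three hypotheses of (1) are checked.

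\emph{Coresolving of $\mathfrak A_M(\cX,\cY)$.} Since $\cX$ and $\cY$ are coresolving by Proposition \ref{prop:cotorsion-criteria}, and injective $\Lambda$-modules have injective components, componentwise exactness gives closure under extensions and cokernels of monomorphisms.

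\emph{Extension-closure of $\mathfrak P_M(\cU,\cV)$.} Take a short exact sequence whose outer terms lie in $\mathfrak P_M(\cU,\cV)$. The first components form a sequence in $\cU$. Because $\Tor_1^A(M,\cU)=0$, the sequence $0\to M\otimes U'\to M\otimes U\to M\otimes U''\to0$ is exact. The snake lemma then shows that the middle $\alpha$ is monic and its cokernel is an extension of objects of $\cV$, hence lies in $\cV$.

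Part (1) now yields a right $1$-cotorsion pair with $\mathcal R=\mathcal P^{\perp}$. To get an honest complete cotorsion pair, I still need ${}^\perp\mathcal R=\mathcal P$. The inclusion $\mathcal P\subseteq{}^\perp\mathcal R$ is clear. For the reverse, take $N\in{}^\perp\mathcal R$ and use its special $\mathcal R$-preenvelope $0\to N\to R\to P\to0$, which exists by the right pair property. I would also take a special precover $0\to R'\to P'\to N\to0$; its existence is the left $1$-cotorsion property and can be obtained dually via Corollary \ref{cor:local-global-left-n}, or more cheaply via Proposition \ref{prop:cotorsion-criteria}(1) once $({}^\perp\mathcal R,\mathcal R)$ is known to be a cotorsion pair with enough preenvelopes. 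In the precover sequence, $\Ext^1(N,R')=0$, so it splits. Thus $N$ is a summand of $P'\in\mathcal P$.

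\emph{Closure of $\mathcal P$ under summands.} Summands of objects of $\mathcal P$ stay in $\mathcal P$ because $\cU$ and $\cV$ are closed under summands, and a summand of a monomorphism is a monomorphism. Hence $(\mathcal P,\mathcal R)$ is a cotorsion pair, complete by Proposition \ref{prop:cotorsion-criteria}(1).

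\emph{Heredity.} I would check that $\mathcal P$ is closed under kernels of epimorphisms: the same snake lemma argument applies, using $\Tor_1^A(M,\cU)=0$ and the resolving property of $\cU$ and $\cV$. Then Proposition \ref{prop:cotorsion-criteria}(2) gives that the pair is hereditary.

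The main obstacle is the orthogonality $\mathcal R=\mathcal P^{\perp}$ itself, which in Corollary \ref{cor:comma} was imported from \cite[Lemma 3.2(1)]{LiuFengHuZhang}. Here it must be checked via adjunction. For $P=\left(\begin{smallmatrix}U\\V'\end{smallmatrix}\right)_\alpha\in\mathcal P$, the sequence $0\to\sT(U)\to P\to\sZ(0,\Coker\alpha)\to0$ decomposes $P$. The isomorphism $\Ext^1_\Lambda(\sT(U),-)\cong\Ext^1_A(U,-)$ from Lemma \ref{lem:Hu-Ext} (using $\Tor_1=0$) handles the first term, while the second term $\left(\begin{smallmatrix}0\\ \Coker\alpha\end{smallmatrix}\right)$ has $\Ext^1$ computed in $B\text{-}\Mod$. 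Together these give $\Ext^1(\mathcal P,\mathcal R)=0$ and, by testing on these generators, the reverse inclusion.
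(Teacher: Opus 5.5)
Part (1) is the paper's argument. For part (2) you take a different route. The paper takes both the right approximations and the left approximations $0\to R\to P\to\sZ(C,D)\to0$ from \cite[Theorems 4.4(1) and 5.5]{MaoTriangular}, and feeds them into Theorem \ref{thm:local-global-right-n} and Corollary \ref{cor:local-global-left-n}. You keep only the right pair from (1) and try to obtain $\mathcal P={}^\perp\mathcal R$ by a splitting argument. That strategy can work and is more self-contained, but as written the step producing a precover $0\to R'\to P'\to N\to0$ with $P'\in\mathcal P$ is not justified.

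\textbf{The precover step.} Your ``cheap'' route, Proposition \ref{prop:cotorsion-criteria}(1) applied to $({}^\perp\mathcal R,\mathcal R)$, only yields $P'\in{}^\perp\mathcal R$. Concluding that $N$ is a summand of an object of $\mathcal P$ therefore presupposes the inclusion ${}^\perp\mathcal R\subseteq\mathcal P$ you are trying to prove, so the argument is circular. The dual route through Corollary \ref{cor:local-global-left-n} needs exactly the left approximations of every $\sZ(C,D)$ and $\Ext^2(\mathcal P,\mathcal R)=0$, and you never supply either.

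The repair is Salce's pushout applied to $\mathcal P$ itself, using facts you essentially already have:
\begin{enumerate}
\item $\mathcal P$ contains every projective $\Lambda$-module. These are summands of $\left(\begin{smallmatrix}Q_1\\(M\otimes_AQ_1)\oplus Q_2\end{smallmatrix}\right)$ with the inclusion as structure map, where $Q_1\in\cU$ and $Q_2\in\cV$ are projective.
\item You proved $\mathcal P$ is closed under extensions and direct summands.
\end{enumerate}
Given $N$, take $0\to K\to Q\to N\to0$ with $Q$ projective, and a special $\mathcal R$-preenvelope $0\to K\to R\to P\to0$ from (1). The pushout gives $0\to R\to E\to N\to0$ and $0\to Q\to E\to P\to0$, so $E\in\mathcal P$. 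With these precovers, your splitting argument goes through and gives $\mathcal P={}^\perp\mathcal R$. Completeness and your heredity check (kernels of epimorphisms, or the paper's coresolving criterion) then follow.

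\textbf{The injectives in $\mathfrak A_M(\cX,\cY)$.} Your justification is false: injective $\Lambda$-modules need not have injective components. They are summands of $\left(\begin{smallmatrix}E_1\oplus\operatorname{Hom}_B(M,E_2)\\ E_2\end{smallmatrix}\right)$ with $E_1,E_2$ injective, and $\operatorname{Hom}_B(M,E_2)$ need not be an injective $A$-module. For example, take $A=\mathbb Z$ and $B=M=E_2=\mathbb Z/2$. The conclusion still holds under the hypotheses of (2), but for a different reason. For $U\in\cU$ and $E$ injective one has $\Ext^1_A(U,\operatorname{Hom}_B(M,E))\cong\operatorname{Hom}_B(\Tor_1^A(M,U),E)=0$, so $\operatorname{Hom}_B(M,E)\in\cU^\perp=\cX$. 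Alternatively, it follows at once from $\mathfrak A_M(\cX,\cY)=\mathfrak P_M(\cU,\cV)^\perp$, which your final paragraph establishes correctly via $0\to\sT(U)\to P\to\sZ(0,\Coker\alpha)\to0$.
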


\begin{proof}
For {\rm(1)}, take
$
\cC=A\text{-}\Mod, \cD=B\text{-}\Mod, G=M\otimes_A-.
$
There is an exact equivalence
\[
\Lambda\text{-}\Mod\simeq
(G\downarrow B\text{-}\Mod),
\]
and
\[
\mathfrak P_G(\cU,\cV)=\mathfrak P_M(\cU,\cV),
\qquad
\mathfrak A_G(\cX,\cY)=\mathfrak A_M(\cX,\cY).
\]
Moreover,
\[
L_iG(-)\cong\Tor_i^A(M,-).
\]
Thus the assumptions in {\rm(1)} are precisely those of Corollary
\ref{cor:comma}, and the first assertion follows. This recovers
\cite[Theorem 3.7]{LongZhang}.

For {\rm(2)}, take \(n=1\),
$
\mathcal P=\mathfrak P_M(\cU,\cV), \mathcal R=\mathfrak A_M(\cX,\cY).
$
Under the hypotheses, the results in \cite[Theorems 4.4(1) and 5.5]{MaoTriangular} verify the assumptions
of Theorem \ref{thm:local-global-right-n} and Corollary
\ref{cor:local-global-left-n}. Hence
\[
\bigl(\mathfrak P_M(\cU,\cV),
\mathfrak A_M(\cX,\cY)\bigr)
\]
is a complete cotorsion pair. Since
\(\mathfrak A_M(\cX,\cY)\) is coresolving, Proposition
\ref{prop:cotorsion-criteria}(2) shows that it is hereditary. Thus
\[
\bigl(\mathfrak P_M(\cU,\cV),
\mathfrak A_M(\cX,\cY)\bigr)
\]
is a hereditary complete cotorsion pair in
\(\Lambda\text{-}\Mod\), recovering
\cite[Theorem 5.6(1)]{MaoTriangular}.
\end{proof}

\begin{corollary} Let $\cA=\cB\ltimes\sF$ be a right trivial extension, and let \((\cX,\cY)\) be a hereditary
complete cotorsion pair in \(\cB\). If
$
L_1\sF(\cX)=0, \sF(\cX)\subseteq\cY,
$
then
$
\bigl({}^\perp\sU^{-1}(\cY),\sU^{-1}(\cY)\bigr)
$
is a hereditary complete cotorsion pair in \(\cA\).
\end{corollary}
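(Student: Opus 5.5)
This corollary is the special case \(\eta=0\) of Theorem \ref{thm:complete-cotorsion}. The plan is to check that the right trivial extension \(\cA=\cB\ltimes\sF\) meets the standing hypothesis of that theorem, namely that \(\cA\) is image-nilpotent; all the remaining hypotheses coincide verbatim with those of the corollary.

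First, I will observe that \(\cB\ltimes\sF\) is by definition the \(\eta\)-extension \(\cB\ltimes_\eta\sF\) with \(\eta=0\). The zero transformation is trivially natural and associative, so the general machinery of Sections 2--4 applies. Then \(\eta^{[2]}=\eta\circ\sF(\eta^{[1]})=0\), and Theorem \ref{thm:nilpotence-criterion} (its last assertion) shows that \(\cA\) is uniformly image-nilpotent of index at most two. Concretely, by Theorem \ref{thm:power-image}, \(\sI^2(A)=0\) for every \(A=(B,f)\) because \(f^{[2]}=f\eta_B=0\). In particular every object is image-nilpotent, so \(\cN_\eta=\cA\).

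Second, I will invoke Theorem \ref{thm:complete-cotorsion} with the given hereditary complete cotorsion pair \((\cX,\cY)\), using \(L_1\sF(\cX)=0\) and \(\sF(\cX)\subseteq\cY\). It yields that \(\bigl({}^\perp\sU^{-1}(\cY),\sU^{-1}(\cY)\bigr)\) is a hereditary complete cotorsion pair in \(\cA\).

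There is no real obstacle, since the argument is a direct specialization. The only point worth making explicit is that no assumption such as \(\sF^2=0\) is needed, because image-nilpotence comes from \(\eta=0\) alone (Remark \ref{rem:Hu-trivial-index}). For illustration, one can note that the Loewy filtration has length at most two: the special preenvelope of \(A\) is glued from the lifts \(0\to\sZ(C_r(A))\to E_r\to\sT(X_r)\to0\) for \(r=0,1\), as in Lemma \ref{lem:basic-lifting} and Remark \ref{rem:basic-lifting}, which recovers \cite[Theorem 3.13]{Hu}.
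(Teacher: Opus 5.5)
Your proposal is correct and matches the paper's own proof: $\eta=0$ gives $\eta^{[2]}=0$, so Theorem~\ref{thm:nilpotence-criterion} makes $\cA$ uniformly image-nilpotent of index at most two. Theorem~\ref{thm:complete-cotorsion} then applies directly with the given hypotheses.
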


\begin{proof}
Since \(\eta=0\), one has \(\eta^{[2]}=0\). Hence Theorem
\ref{thm:nilpotence-criterion} shows that \(\cA\) is uniformly
image-nilpotent of index at most two. The assertion follows from
Theorem \ref{thm:complete-cotorsion}. Thus
\cite[Theorem 3.13]{Hu} is the special case
$
\eta=0, d=2
$
of Theorem \ref{thm:complete-cotorsion}.
\end{proof}

\begin{corollary}\label{cor:Morita}
Let
\[
\Lambda=\begin{pmatrix}A&N\\M&B\end{pmatrix}
\]
be a Morita context ring with zero context homomorphisms. Put
\[
\mathcal E=A\text{-}\Mod\times B\text{-}\Mod,\qquad
\sF(X,Y)=(N\otimes_BY,M\otimes_AX).
\]
There is an exact equivalence
$
\Lambda\text{-}\Mod\simeq\mathcal E\ltimes\sF.
$
Let \((\cU,\cX)\) and \((\cV,\cY)\) be right \(n\)-cotorsion pairs in
\(A\text{-}\Mod\) and \(B\text{-}\Mod\), respectively. The following
statements hold.
\begin{enumerate}
\item[(1)] If
\[
(\cU\times\cV,\cX\times\cY)
\]
is an \(\sF\)-compatible right \(n\)-cotorsion pair in \(\mathcal E\),
then, under the above equivalence,
\[
\bigl(\sT(\cU\times\cV),
\sU^{-1}(\cX\times\cY)\bigr)
\]
is a right \(n\)-cotorsion pair in \(\Lambda\text{-}\Mod\).

\item[(2)] If \((\cU,\cX)\) and \((\cV,\cY)\) are hereditary complete
cotorsion pairs satisfying
\[
\Tor^B_1(N,\cV)=0,\qquad
\Tor^A_1(M,\cU)=0,
\]
and
\[
N\otimes_B\cV\subseteq\cX,\qquad
M\otimes_A\cU\subseteq\cY,
\]
then, under the above equivalence,
\[
\bigl({}^{\perp}\sU^{-1}(\cX\times\cY),
\sU^{-1}(\cX\times\cY)\bigr)
\]
is a hereditary complete cotorsion pair in
\(\Lambda\text{-}\Mod\).
\end{enumerate}
\end{corollary}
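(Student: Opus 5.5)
The plan is to reduce everything to the general theorems of Section 4. The first task is the exact equivalence $\Lambda\text{-}\Mod\simeq\mathcal E\ltimes\sF$; after that, (1) follows from Theorem \ref{thm:lift-n-cotorsion} and (2) from Theorem \ref{thm:complete-cotorsion}, each transported through Lemma \ref{lem:exact-equivalence}.

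\emph{The equivalence.} A left $\Lambda$-module is a triple $(X,Y,\varphi,\psi)$ with $\varphi:N\otimes_BY\to X$ and $\psi:M\otimes_AX\to Y$. Because the context homomorphisms vanish, the only associativity constraints are
\[
\varphi(N\otimes\psi)=0,\qquad \psi(M\otimes\varphi)=0 .
\]
Set $f=\left(\begin{smallmatrix}0&\varphi\\ \psi&0\end{smallmatrix}\right):\sF(X,Y)\to(X,Y)$, written componentwise. Then $f\sF(f)=0$ is exactly this pair of constraints, which is the object condition for $\eta=0$. Morphisms on both sides are pairs of linear maps commuting with $\varphi$ and $\psi$, so the assignment is an isomorphism of categories. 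Exactness is detected componentwise by Lemma \ref{lem:1}, so it is an exact equivalence. This is the standard description of such Morita context rings, and I would cite it rather than repeat the proof. Since $\eta=0$, Theorem \ref{thm:nilpotence-criterion} makes $\mathcal E\ltimes\sF$ uniformly image-nilpotent of index at most two. Also, $\mathcal E$ has enough projectives and injectives componentwise.

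\emph{Part (1).} The hypothesis says directly that $(\cU\times\cV,\cX\times\cY)$ is an $\sF$-compatible right $n$-cotorsion pair in $\mathcal E$. Theorem \ref{thm:lift-n-cotorsion} therefore gives a right $n$-cotorsion pair $(\sT(\cU\times\cV),\sU^{-1}(\cX\times\cY))$ in $\mathcal E\ltimes\sF$. Lemma \ref{lem:exact-equivalence} transports it to $\Lambda\text{-}\Mod$. Strictly, one should first check that $(\cU\times\cV,\cX\times\cY)$ is a right $n$-cotorsion pair. This holds because Ext in $\mathcal E$ splits as
\[
\Ext^i_{\mathcal E}((U,V),(X,Y))\cong\Ext^i_A(U,X)\oplus\Ext^i_B(V,Y),
\]
and approximation sequences can be taken componentwise. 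The hypothesis already asserts it in any case.

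\emph{Part (2).} First check that $(\cU\times\cV,\cX\times\cY)$ is a hereditary complete cotorsion pair in $\mathcal E$. The orthogonality and heredity conditions follow from the Ext splitting above, and completeness comes from componentwise special preenvelopes. Next, $L_1\sF(U,V)=(\Tor_1^B(N,V),\Tor_1^A(M,U))$ vanishes on $\cU\times\cV$. Also $\sF(\cU\times\cV)=N\otimes_B\cV\times M\otimes_A\cU\subseteq\cX\times\cY$, which uses the swap of coordinates. With these facts, Theorem \ref{thm:complete-cotorsion} applies and yields the hereditary complete cotorsion pair $({}^\perp\sU^{-1}(\cX\times\cY),\sU^{-1}(\cX\times\cY))$. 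Lemma \ref{lem:exact-equivalence} transfers it to $\Lambda\text{-}\Mod$.

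The main obstacle is bookkeeping rather than mathematics. One must verify the exact equivalence carefully: zero context maps correspond exactly to $f\sF(f)=0$, and the coordinate swap in $\sF$ must be matched correctly with the $\Tor$ and tensor hypotheses. Everything else is a direct citation.
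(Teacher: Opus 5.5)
Your proposal is correct and follows the paper's proof. Realize $\Lambda\text{-}\Mod$ as the right trivial extension $\mathcal E\ltimes\sF$ with $\eta=0$, and get image-nilpotence from Theorem \ref{thm:nilpotence-criterion}. Then apply Theorem \ref{thm:lift-n-cotorsion} for (1), and Theorem \ref{thm:complete-cotorsion} for (2) after checking $L_1\sF(\cU\times\cV)=0$ and $\sF(\cU\times\cV)\subseteq\cX\times\cY$ through the coordinate swap. The only difference is that you spell out what the paper cites or asserts: the object-level check of the equivalence, the splitting of $\Ext$ in $\mathcal E$ that makes the product pair hereditary and complete, and the explicit transport through Lemma \ref{lem:exact-equivalence}.
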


\begin{proof}
The displayed equivalence is the standard realization of the module
category of a Morita context ring with zero context homomorphisms, see
\cite{Green,CuiRongZhang,Hu}. The associated natural transformation is
\(\eta=0\), because the two context homomorphisms vanish. Hence
\(\eta^{[2]}=0\), and Theorem \ref{thm:nilpotence-criterion} shows that
\(\mathcal E\ltimes\sF\) is uniformly image-nilpotent of index at most
two.

Part {\rm(1)} follows directly from Theorem
\ref{thm:lift-n-cotorsion}.

For {\rm(2)}, the product pair
$
(\cU\times\cV,\cX\times\cY)
$
is a hereditary complete cotorsion pair in \(\mathcal E\). Moreover,
\[
L_1\sF(\cU\times\cV)
\cong
\bigl(\Tor^B_1(N,\cV),\Tor^A_1(M,\cU)\bigr)=0,
\]
and
\[
\sF(\cU\times\cV)
=
(N\otimes_B\cV,M\otimes_A\cU)
\subseteq\cX\times\cY.
\]
Thus Theorem \ref{thm:complete-cotorsion} applies and gives the
asserted hereditary complete cotorsion pair.
\end{proof}

\end{document}